\DeclareMathOperator{\Read}{\mathsf{Read}}
\DeclareMathOperator{\adj}{\mathsf{adj}}
\DeclareMathOperator{\sgn}{\mathsf{sgn}}
\DeclareMathOperator{\wt}{\mathsf{wt}}
\DeclareMathOperator{\area}{\mathsf{area}}
\DeclareMathOperator{\level}{\mathsf{level}}
\DeclareMathOperator{\tp}{\mathsf{tp}}
\DeclareMathOperator{\dinv}{\mathsf{dinv}}
\DeclareMathOperator{\diags}{\mathsf{diags}}
\DeclareMathOperator{\LNDP}{LNDP}
\DeclareMathOperator{\SLNDP}{SLNDP}
\DeclareMathOperator{\AD}{AD}
\DeclareMathOperator{\ASM}{ASM}
\DeclareMathOperator{\T}{\mathcal{T}}
\newcommand{\Q}{\mathbb Q}
\newtheorem{theorem}{Theorem}[section]
\newtheorem{lemma}[theorem]{Lemma}
\newtheorem{corollary}[theorem]{Corollary}
\newtheorem{proposition}[theorem]{Proposition}
\newtheorem{conjecture}[theorem]{Conjecture}
\theoremstyle{definition}
\newtheorem{example}[theorem]{Example}
\newtheorem{remark}[theorem]{Remark}
\begin{document}

\title{Domino Tilings, Domino Shuffling, and the Nabla Operator}

\author[1]{Ian Cavey \thanks{\href{mailto:cavey@illinois.edu}{cavey@illinois.edu}}}
\author[2]{Yi-Lin Lee \thanks{\href{mailto:yillee@iu.edu}{yillee@iu.edu}}}
\affil[1]{Department of Mathematics, University of Illinois Urbana-Champaign, Urbana, IL, USA}
\affil[2]{Department of Mathematics, Indiana University, Bloomington, IN, USA}

\date{}
\maketitle
\setlength{\parskip}{1mm}

\begin{abstract}
We study domino tilings of certain regions $R_\lambda$, indexed by partitions $\lambda$, weighted according to generalized area and dinv statistics. These statistics arise from the $q,t$-Catalan combinatorics and Macdonald polynomials. We present a formula for the generating polynomial of these domino tilings in terms of the Bergeron--Garsia nabla operator. When $\lambda = (n^n)$ is a square shape, domino tilings of $R_\lambda$ are equivalent to those of the Aztec diamond of order $n$. In this case, we give a new product formula for the resulting polynomials by domino shuffling and its connection with alternating sign matrices. In particular, we obtain a combinatorial proof of the joint symmetry of the generalized area and dinv statistics.
\end{abstract}

\begin{small}
\noindent \textit{Keywords.} Alternating sign matrices; domino tilings; domino shuffling; nabla operator; $q,t$-Catalan combinatorics; symmetric functions.

\vspace{-1mm}
\noindent \textit{2020 Mathematics Subject Classification.} 05A15, 05A19, 05B45, 05E05.
\end{small}


\section{Introduction}\label{sec:introduction}

The study of Macdonald polynomials has produced many interesting combinatorial objects, often expressed as weighted sums over sets of lattice paths. Perhaps the most famous and well-studied such objects are the $q,t$-Catalan numbers introduced by Garsia and Haiman \cite{GH96}, which can be defined combinatorially as the sum over Dyck paths weighted by the area and dinv statistics (see the book \cite{H08} and the references therein). Among the many generalizations of the $q,t$-Catalan numbers are the extension to Schr\"oder paths defined by Egge, Haglund, Killpatrick, and Kremer \cite{EHKK03}, and to nested families of Dyck paths due to Loehr and Warrington \cite{LW08}. All of these objects have natural interpretations in terms of Macdonald polynomials often expressed via the \emph{nabla operator} $\nabla$ on symmetric functions introduced by Bergeron and Garsia \cite{BG99}. In this paper, we study the common generalization to nested families of Schr\"oder paths and their connection to domino tilings.

Let $\Lambda$ denote the ring of symmetric functions in the countably infinite set of variables $\{x_i\}_{i \geq 1}$ with coefficients in the field of rational functions $\Q(q,t).$ We will make use of the Hall inner product $\langle \cdot,\cdot \rangle$ on $\Lambda$, and the following vector space bases for $\Lambda$, each indexed by partitions $\mu$: the monomial symmetric functions $m_\mu$, the elementary symmetric functions $e_\mu$, the complete homogeneous symmetric functions $h_\mu$, the Schur functions $s_\mu$, and the modified Macdonald Polynomials $\tilde{H}_\mu$. For details on the classical bases and the Hall inner product, see the book \cite{Mac79}. The modified Macdonald polynomials $\tilde{H}_\mu$ are variants of the polynomials studied by Macdonald \cite{Mac88} appearing in Garsia and Haiman's work on diagonal harmonics \cite{GH96}. See also \cite[Chapter~2]{H08} for more details.

The \textit{nabla operator} is the unique $\Q(q,t)$-linear map $\nabla$ on $\Lambda$ such that 
\begin{equation}\label{eq.nabladef}
    \nabla(\tilde{H}_\mu)=q^{n(\mu')}t^{n(\mu)}\tilde{H}_\mu,
\end{equation}
for all $\mu$, where $\mu'$ is the conjugate of $\mu$ and $n(\mu) =\sum (i-1)\mu_i$. One can obtain interesting combinatorial objects by applying $\nabla$ to elements of some basis of $\Lambda$ and expanding the result in terms of another bases or, equivalently, taking the inner product with an element of the dual basis. We list some of these combinatorial objects in Table \ref{tab.nabla}. For other formulas of this kind, see \cite{LW08}.
\begin{table}[htb!]
\centering
\begin{tabular}{c|c|c}
  \hline
  Algebraic Object & Combinatorial Model & Special Case of Theorem \ref{thm:MacdonaldIntro} \\
  \hline
  $\langle \nabla(e_n),e_n\rangle$ & the $q,t$-Catalan numbers \cite{GH96}\cite{GH02} & $\lambda = (1^n)$ and $d=0$\\
  \hline
  $\langle \nabla(e_n),h_d e_{n-d}\rangle$ & the $q,t$--Schr\"oder numbers \cite{EHKK03}\cite{Hag04} & $\lambda = (1^n)$ \\
  \hline
  $\sgn(\lambda)\langle \nabla(s_{\lambda}),e_{n}\rangle$ & the $\lambda$-families of Dyck paths \cite{LW08} & $d=0$\\
  \hline
\end{tabular}
\caption{Some combinatorial objects 
 of the nabla operator.}\label{tab.nabla}
\end{table}

Let $\lambda=(\lambda_1,\dots,\lambda_\ell)$ be a partition of $n$ where $n=\lambda_1+\cdots+\lambda_\ell$ and $\lambda_1\geq \lambda_2\geq \cdots\geq \lambda_\ell>0$. We identify $\lambda$ with its Ferrers diagram (in English notation) as in Figure~\ref{fig:lambda}. Loehr and Warrington \cite{LW08} consider strictly nested collections of Dyck paths with starting points $(0,0),(1,1),(2,2)\dots$, and ending points determined by $\lambda$ in the following way. Decompose the Ferrers diagram of $\lambda$ into border strips and label each column with the length of the strip whose rightmost box is in that column (and with label $0$ if there is no such strip) as shown in Figure \ref{fig:lambda} for the partition $\lambda = (4,4,3,3,3,1)$. Let $n_0,\dots,n_k$ denote these labels read from right to left where $k=\lambda_1-1$. A \emph{$\lambda$-family of Dyck paths} is a $(k+1)$-tuple $\pi = (\pi_0,\dots,\pi_k)$ of pairwise non-intersecting lattice paths, where each $\pi_j$ is a lattice path consisting of unit north $(0,1)$ and east $(1,0)$ steps that lies weakly above the diagonal $y=x$, starts at $(j,j)$, and ends at $(j+n_j,j+n_j)$.
\begin{figure}[h]
    \centering
    \subfigure[]{\label{fig:lambda}\includegraphics[height=0.2\textwidth]{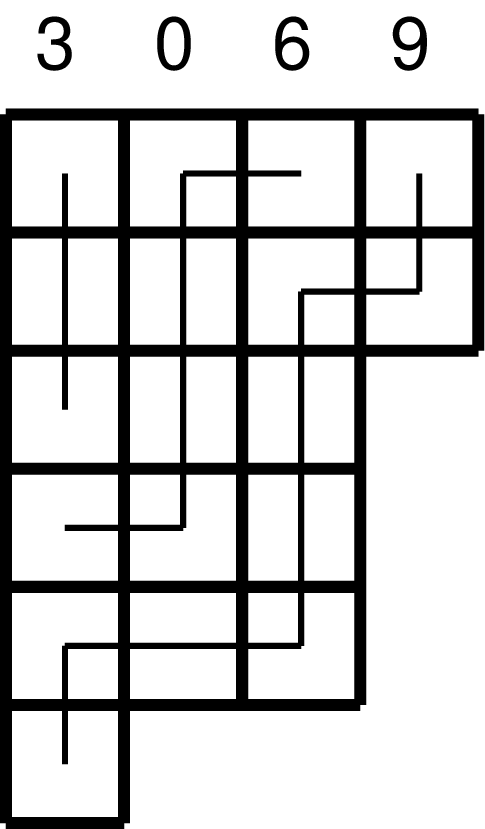}}
    \hspace{10mm}
    \subfigure[]{\label{fig:paths}\includegraphics[height=0.2\textwidth]{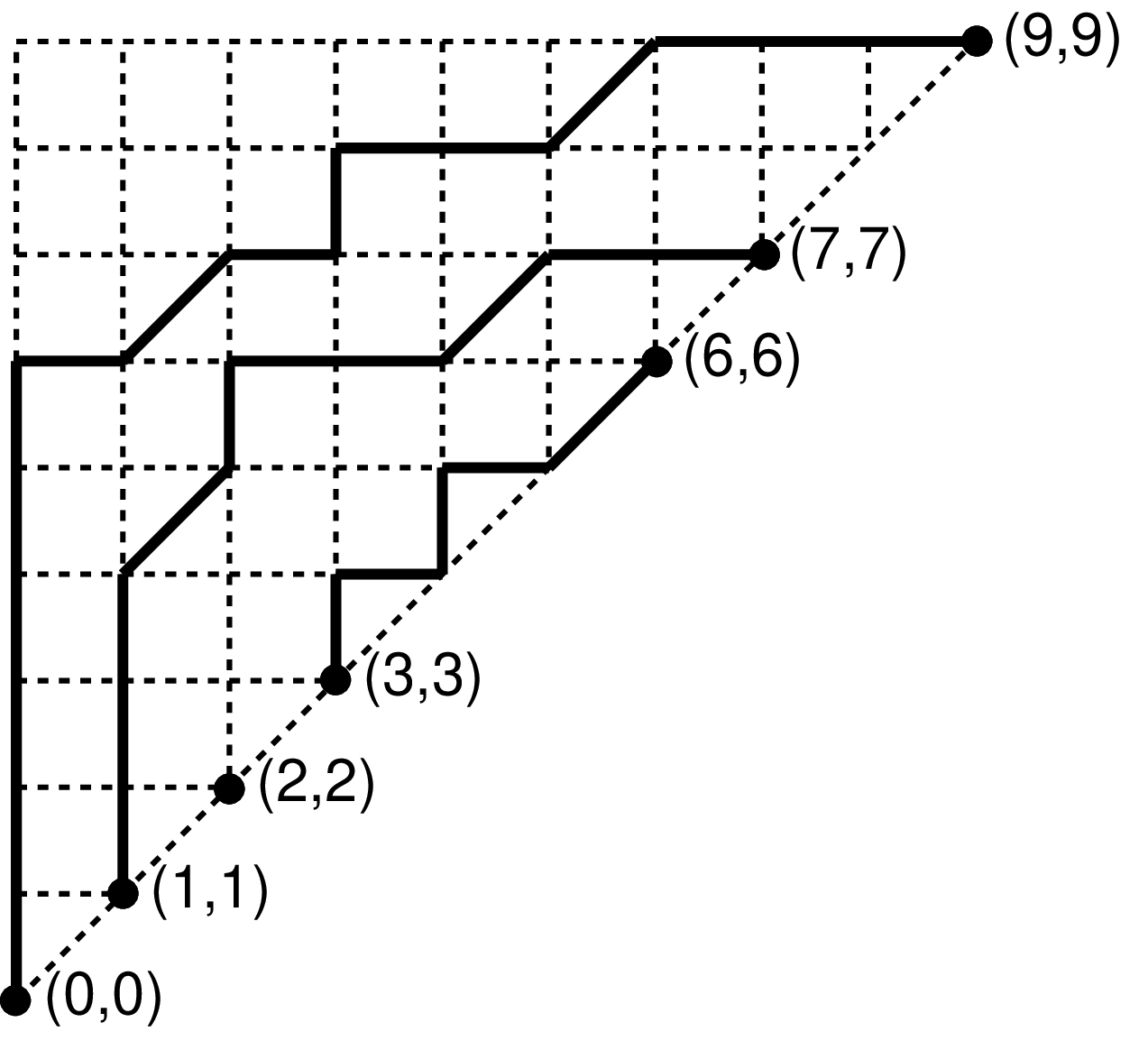}}
    \hspace{10mm}
    \subfigure[]{\label{fig:tilingpaths}\includegraphics[height=0.22\textwidth]{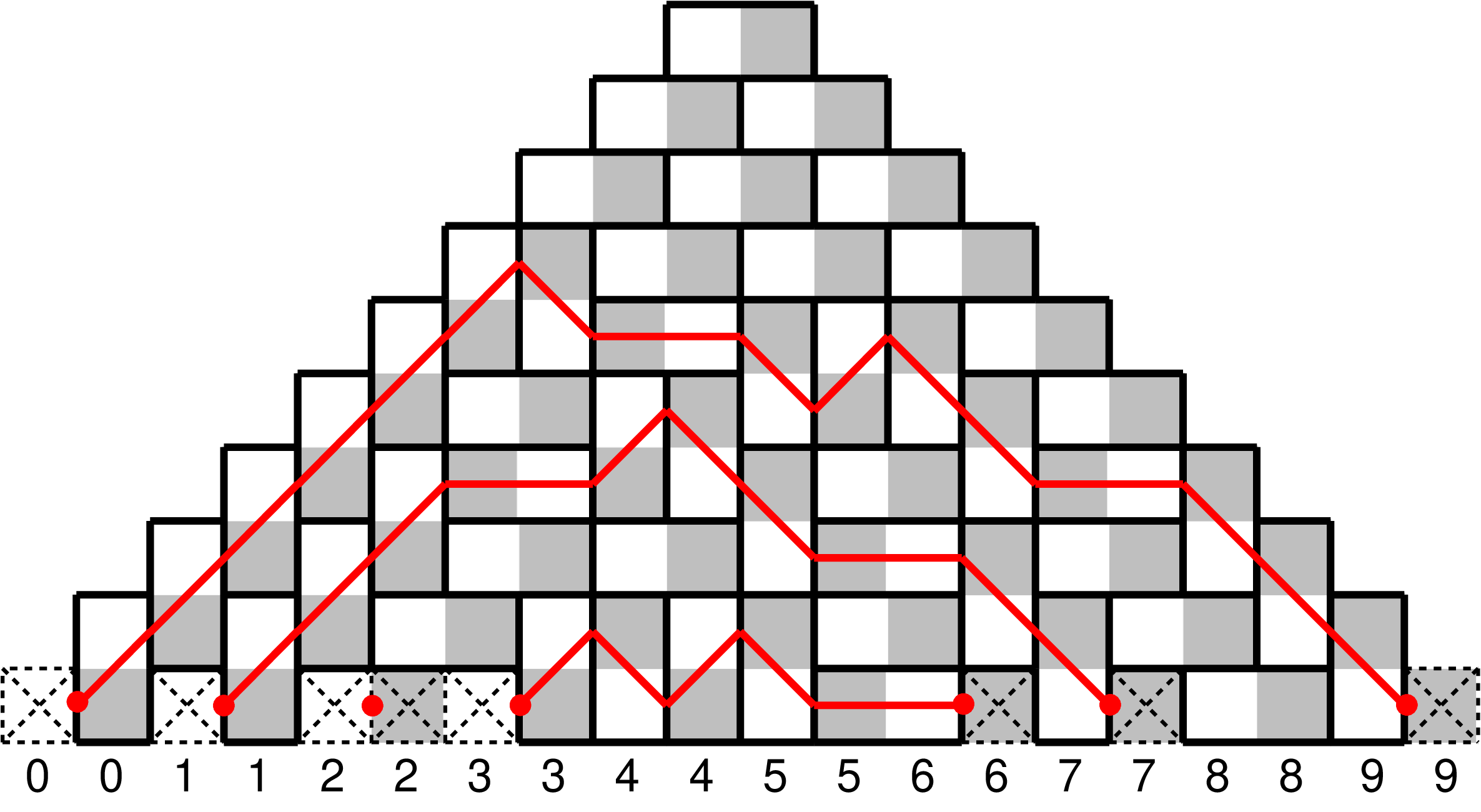}}
    \caption{(a) The decomposition of $\lambda = (4,4,3,3,3,1)$ into border strips of lengths $(n_0,n_1,n_2,n_3)=(9,6,0,3)$. (b) A $\lambda$-family of Schr\"oder paths for $\lambda = (4,4,3,3,3,1)$ with 5 diagonal steps. (c) The region $R_\lambda$ for $\lambda = (4,4,3,3,3,1)$ with the domino tiling that corresponds to the $\lambda$-family in Figure \ref{fig:paths}.}
\end{figure} 

In this paper we study the Schr\"oder generalization of these nested families of paths. Using the same notation as for the Dyck paths, a $\lambda$\textit{-family of Schr\"oder paths} is a $(k+1)$-tuple $\pi = (\pi_0,\dots,\pi_k)$ of pairwise non-intersecting lattice paths, where each $\pi_i$ is a lattice path consisting of unit north $(0,1)$, east $(1,0)$, and diagonal $(1,1)$ steps that lies weakly above the diagonal $y=x$, starts at $(j,j)$, and ends at $(j+n_j,j+n_j)$. Figure \ref{fig:paths} shows a $\lambda$-family of paths for the partition in Figure \ref{fig:lambda}. Let $\mathcal{S}_\lambda$ be the set of all $\lambda$-families of Schr\"oder paths and $\mathcal{S}_{\lambda,d}$ the set of all $\lambda$-families of Schr\"oder paths with $d$ total diagonal steps among all the paths.

We define generalized area and dinv statistics on $\mathcal{S}_{\lambda,d}$ (Section \ref{sec:statistics}) that specialize to the corresponding statistics in the cases of $\lambda$-families of Dyck paths, (single) Schr\"oder paths, and (single) Dyck paths. In the rest of the paper, we refer to generalized area and dinv statistics simply as area and dinv.

Our first result expresses $\lambda$-families of Schr\"oder paths weighted according to these statistics in the language of Macdonald polynomials. In the following formula, $\sgn(\lambda) = \pm 1$ is a sign factor introduced in \cite{LW08} whose definition is recalled in Section \ref{sec:statistics}.
\begin{theorem}\label{thm:MacdonaldIntro}
    For any partition $\lambda \vdash n$ and $0\leq d\leq n$,
    \begin{equation}\label{eq.thm1}
        \sum_{\pi\in \mathcal{S}_{\lambda,d}} q^{\area(\pi)}t^{\dinv(\pi)} = \sgn(\lambda) \langle \nabla(s_\lambda),h_d \, e_{n-d} \rangle.
    \end{equation}
\end{theorem}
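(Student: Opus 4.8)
The plan is to reduce the Schr\"oder-path identity \eqref{eq.thm1} to the known Dyck-path case, which is the $d=0$ specialization recorded in Table~\ref{tab.nabla}, namely $\sgn(\lambda)\langle\nabla(s_\lambda),e_n\rangle = \sum_{\pi\in\mathcal S_{\lambda,0}} q^{\area(\pi)}t^{\dinv(\pi)}$ from \cite{LW08}. On the algebraic side the key observation is that $h_d\,e_{n-d}$ expands in the elementary basis, so $\langle\nabla(s_\lambda),h_d\,e_{n-d}\rangle$ is a linear combination of terms $\langle\nabla(s_\lambda),e_\mu\rangle$. More usefully, since $\nabla$ is self-adjoint with respect to the Hall inner product (it is diagonal in the orthogonal-up-to-scalar basis $\tilde H_\mu$ with real eigenvalues $q^{n(\mu')}t^{n(\mu)}$, after accounting for the standard Gram matrix), one can move $\nabla$ onto the other factor, or alternatively use the Pieri-type expansion. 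The most direct route is a generating-function identity: summing \eqref{eq.thm1} over all $d$ against a bookkeeping variable, one wants
\begin{equation*}
  \sum_{d=0}^n z^d \sum_{\pi\in\mathcal S_{\lambda,d}} q^{\area(\pi)}t^{\dinv(\pi)}
  \;=\; \sgn(\lambda)\Big\langle \nabla(s_\lambda),\; \sum_{d=0}^n z^d\, h_d\,e_{n-d}\Big\rangle,
\end{equation*}
and then to match coefficients of $z^d$. The inner sum $\sum_d z^d h_d e_{n-d}$ is the degree-$n$ part of the product of generating series $H(z)E(1) = \big(\sum_{i\ge0} z^i h_i\big)\big(\sum_{j\ge 0} e_j\big)$, a manageable symmetric-function object.

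The combinatorial heart of the argument is a bijection that "resolves" diagonal steps. First I would set up the correspondence between a $\lambda$-family of Schr\"oder paths $\pi$ with $d$ diagonal steps and a pair $(\sigma, S)$, where $\sigma$ is a $\lambda$-family of Dyck paths (for a possibly modified shape, or for the same shape with adjusted endpoints) and $S$ records the positions of the $d$ diagonal steps, each diagonal step being locally interchangeable with an east-then-north (or north-then-east) corner. This is the standard device relating single Schr\"oder paths to Dyck paths with marked corners, and I expect it to globalize to the nested setting because the non-intersecting condition is preserved: replacing a diagonal step by the corner on its "outside" keeps the family pairwise non-crossing. The second step is to check that under this bijection the generalized area and dinv statistics transform in the way predicted by the algebraic side — concretely, that $q^{\area}t^{\dinv}$ on the Schr\"oder family factors as (the Dyck-path weight) times a monomial in $q,t$ depending only on where the marked corners sit, matching the contribution of the $h_d$ factor in the Pieri expansion. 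This is exactly the kind of statement already verified for $\lambda=(1^n)$ in \cite{EHKK03}, \cite{Hag04}, so the task is to show the bookkeeping is uniform in $\lambda$.

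Assembling the pieces: after the bijection I would have $\sum_{\pi\in\mathcal S_{\lambda,d}} q^{\area(\pi)}t^{\dinv(\pi)}$ written as a weighted sum over Dyck families with $d$ marked corners, which by the $d=0$ case (applied after summing over the marking data) equals $\sgn(\lambda)\langle\nabla(s_\lambda),h_d e_{n-d}\rangle$ once one identifies $\sum_{\text{markings}} (\text{weight})$ with the coefficient extracted from the $\langle\nabla(s_\lambda), h_d e_{n-d}\rangle$ side via Pieri. The sign factor $\sgn(\lambda)$ is inherited unchanged from \cite{LW08} since the bijection does not touch it. The main obstacle I anticipate is verifying the precise statistic-transformation in step two — in particular pinning down the generalized $\dinv$ of a diagonal step relative to the surrounding paths and matching it to the $h_d$ contribution, since $\dinv$ is the more delicate of the two statistics and its definition in the nested Schr\"oder setting (deferred to Section~\ref{sec:statistics}) must be shown to interact correctly with the corner-resolution move. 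A secondary technical point is ensuring the non-intersecting condition and the "weakly above $y=x$" constraint are both respected by the chosen resolution convention; choosing the convention consistently (always resolve toward the same side) should handle this, but it needs to be checked at the boundary paths $\pi_0$ and $\pi_k$.
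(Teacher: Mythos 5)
There is a genuine gap at the core of your plan: the reduction to the $d=0$ case cannot work. The Dyck-path identity $\sgn(\lambda)\langle\nabla(s_\lambda),e_n\rangle=\sum_{\pi\in\mathcal S_{\lambda,0}}q^{\area(\pi)}t^{\dinv(\pi)}$ gives you exactly one coefficient of $\nabla(s_\lambda)$, namely its pairing with $e_{(n)}$. But $h_d\,e_{n-d}$ expands as $s_{(d,1^{n-d})}+s_{(d+1,1^{n-d-1})}$ (equivalently, as a combination of $e_\mu$ for \emph{several} $\mu$), so $\langle\nabla(s_\lambda),h_d\,e_{n-d}\rangle$ genuinely requires more of the expansion of $\nabla(s_\lambda)$ than the single number $\langle\nabla(s_\lambda),e_n\rangle$ provides. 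No corner-marking bijection on unlabeled Dyck families can manufacture this missing information: on the combinatorial side you would be re-weighting the same set $\mathcal S_{\lambda,0}$, while on the algebraic side you need independent data. (The same issue already occurs for $\lambda=(1^n)$: the $q,t$-Schr\"oder theorem of \cite{EHKK03,Hag04} is strictly stronger than the $q,t$-Catalan theorem and is \emph{not} deduced from it; it is deduced from the shuffle-type expansion of $\nabla(e_n)$ over \emph{labeled} Dyck paths.) The self-adjointness remark does not rescue this, since $\langle s_\lambda,\nabla(h_d e_{n-d})\rangle$ is no more accessible.

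The correct input, and the one the paper uses, is the full monomial expansion of $\nabla(s_\lambda)$ over \emph{labeled} weakly-nested Dyck paths (the Loehr--Warrington formula, Theorem \ref{thm:LW}). One writes $\nabla(s_\lambda)=\sgn(\lambda)\sum_\pi q^{\area(\pi)}F_\pi$ with $F_\pi$ a symmetric function in the labels, shows by a standardization lemma that $\langle F_\pi,h_\mu\rangle$ counts standard labelings whose reading word is a $\mu$-shuffle, and then applies superization to compute $\langle F_\pi,h_d\,e_{n-d}\rangle$ as a sum over standard labelings whose reading word shuffles the decreasing word $n-d,\dots,1$ with the increasing word $n-d+1,\dots,n$. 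Your intuition that diagonal steps should correspond to marked north--east corners is essentially what emerges at the end of that argument --- the north steps carrying labels from the increasing block are forced to be followed by east steps and are contracted to diagonal steps --- but the marking data and its weight are dictated by the labels, not chosen freely, and the passage from weakly-nested labeled Dyck families to strictly-nested Schr\"oder families (shared or touching north steps being resolved by the shuffle condition) is precisely the delicate point your convention-choosing remark glosses over. Without the labeled expansion as the starting point, there is no way to match your marking weights to the $h_d e_{n-d}$ coefficient.
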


Theorem \ref{thm:MacdonaldIntro} is the common generalization of several formulas relating $q,t$-Catalan combinatorics to Macdonald polynomials through the $\nabla$ operator summarized in Table \ref{tab.nabla}. When $d=0$, \eqref{eq.thm1} reduces to the formula $\sgn(\lambda)\langle \nabla(s_\lambda),e_n\rangle$ for $\lambda$-families of Dyck paths. When $\lambda =(1^n)$, one recovers the formula $\langle \nabla(e_n), h_d\,e_{n-d} \rangle$ for the $q,t$-Schr\"oder numbers. In the intersection of these cases, when $d=0$ and $\lambda = (1^n)$, the formula $\langle \nabla(e_n),e_n\rangle$ gives the $q,t$-Catalan numbers.

We obtain Theorem \ref{thm:MacdonaldIntro} from an even more general formula for the full monomial expansion of $\nabla(s_\lambda)$ in terms of \emph{labeled weakly-nested Dyck paths} introduced by Loehr and Warrington \cite{LW08} and recently proved by Blasiak, Haiman, Morse, Pun, and Seelinger \cite{BHMPS}. Our proof of Theorem \ref{thm:MacdonaldIntro} in Section \ref{sec:Mac} closely follows Haglund's argument to obtain the corresponding formula for (single) Schr\"oder paths from the shuffle formula for (single) labeled Dyck paths from the Superization section of \cite[Chapter 6]{H08}.

Our present interest in these polynomials is due to a general bijection \cite{LRS01} between domino tilings\footnote{A \emph{domino tiling} of $R$ is a covering of $R$ using dominoes ($1 \times 2$ or $2 \times 1$ rectangles) without gaps or overlaps.} of regions $R$ on the square lattice and families of non-intersecting Schr\"oder-like paths contained in $R$ (see Section \ref{sec:ADbijection}). Motivated by this bijection, we define below a region $R_\lambda$ whose domino tilings correspond to $\lambda$-families of Schr\"oder paths in the set $\mathcal{S}_{\lambda}$. This region is related to the Aztec diamond introduced by Elkies, Kuperberg, Larsen, and Propp \cite{EKLP1, EKLP2} in 1992. One goal of this project is to establish a bridge between the rich combinatorics surrounding domino tilings of the Aztec diamond (and similar regions) and $q,t$-Catalan combinatorics.

The \emph{Aztec diamond} of order $n$, denoted by $\AD_{n}$, is the union of all unit squares inside the diamond-shaped region $\{(x,y) \in \mathbb{R}^2 : |x|+|y| \leq n+1 \}$. Given a partition $\lambda$ decomposed into border strips of lengths $(n_0,\dots,n_k)$ as above, $R_\lambda$ will be defined as a certain subset of the top half, $y\geq 0$, of $\AD_{n_0+1}$. To describe this region, equip the Aztec diamond with the checkerboard coloring so that the unit squares along the top right side are colored black. Label the boxes in the bottom row of the top half of the Aztec diamond $0,0,1,1,2,2,\dots,n_0,n_0$ from left to right. The region $R_\lambda$ is defined by removing from this region the white boxes in the bottom row labeled $0,1,\dots,k$ and the black boxes in the bottom row labeled $i+n_i$ for each $i=0,\dots,k$. Let $\T(R_\lambda)$ be the set of domino tilings of $R_\lambda$. As explained in Section \ref{sec:ADbijection}, $\T(R_\lambda)$ is in bijection with $\mathcal{S}_{\lambda}$. Figure \ref{fig:tilingpaths} shows the domino tiling of $R_\lambda$ corresponding to the $\lambda$-family in Figure \ref{fig:paths}.

Passing through this bijection, we define in Section \ref{sec:ADbijection} statistics $\area$, $\dinv$, and $\diags$ directly on $\T(R_\lambda)$ that agree with the corresponding statistics on $\mathcal{S}_{\lambda,d}$. We then study the following generating polynomial of domino tilings of $R_\lambda$, where $\lambda$ is a partition of $n$:
\begin{equation}\label{eq.P-polynomial}
    P_\lambda(z;q,t) := \sum_{T\in \T(R_\lambda)} z^{\diags(T)}q^{\area(T)}t^{\dinv(T)} = \sum_{d=0}^{n}\sum_{\pi\in \mathcal{S}_{\lambda,d}} z^d q^{\area(\pi)}t^{\dinv(\pi)}.
\end{equation}

A consequence of Theorem \ref{thm:MacdonaldIntro} is the symmetry of $P_\lambda(z;q,t)$ in the variables $q$ and $t$.

\begin{corollary}\label{cor:symmetry}
    For all partitions $\lambda$, we have $P_\lambda(z;q,t)=P_\lambda(z;t,q)$.
\end{corollary}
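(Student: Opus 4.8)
The plan is to deduce \Cref{cor:symmetry} directly from \Cref{thm:MacdonaldIntro} by summing over $d$ and then exploiting the known $q,t$-symmetry of $\nabla$. Explicitly, \Cref{thm:MacdonaldIntro} together with the second equality in \eqref{eq.P-polynomial} gives
\[
    P_\lambda(z;q,t) = \sgn(\lambda)\sum_{d=0}^{n} z^d\,\langle \nabla(s_\lambda),\, h_d\,e_{n-d}\rangle
    = \sgn(\lambda)\,\Bigl\langle \nabla(s_\lambda),\, \sum_{d=0}^{n} z^d\, h_d\,e_{n-d}\Bigr\rangle,
\]
using bilinearity of the Hall inner product. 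So it suffices to show that the scalar $\langle \nabla(s_\lambda), f\rangle$ is invariant under swapping $q$ and $t$, where $f = \sum_{d=0}^n z^d h_d e_{n-d}$ is a fixed degree-$n$ symmetric function independent of $q$ and $t$.

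First I would recall the classical fact that $\nabla$ is self-adjoint with respect to the Hall inner product and, more importantly, that it is symmetric under the involution $q \leftrightarrow t$ in the following sense: the operator $\nabla$ commutes with the $\Q$-algebra automorphism of $\Lambda$ sending $q\mapsto t$, $t\mapsto q$, since the eigenvalue $q^{n(\mu')}t^{n(\mu)}$ attached to $\tilde H_\mu$ becomes $t^{n(\mu')}q^{n(\mu)}$, which is the eigenvalue attached to $\tilde H_{\mu'}$, and the modified Macdonald basis satisfies $\tilde H_\mu(X;q,t) = \tilde H_{\mu'}(X;t,q)$. The cleanest route is therefore: apply the $q\leftrightarrow t$ swap to the scalar $\langle \nabla(s_\lambda), f\rangle$; since $s_\lambda$ and $f$ have coefficients in $\Q$ (no $q,t$ dependence) and the Hall pairing has $\Q$-coefficients, the swap only acts through $\nabla$, and the identity $\langle \nabla(s_\lambda),f\rangle\big|_{q\leftrightarrow t} = \langle \nabla'(s_\lambda), f\rangle$, where $\nabla'$ is $\nabla$ with the swapped eigenvalues, equals $\langle \nabla(s_\lambda),f\rangle$ by the eigenvalue/conjugation symmetry just described. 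This is a standard manipulation in the $q,t$-Catalan literature (it is exactly the argument that shows $\langle \nabla e_n,e_n\rangle$ is $q,t$-symmetric), so I would cite \cite{BG99} or \cite{H08} for the symmetry of $\nabla$ rather than reprove it.

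The only mild subtlety — and the step I would be most careful about — is making sure the $z$-variable plays no role: since $f=\sum_d z^d h_d e_{n-d}$ has each coefficient $z^d$ sitting in front of a $q,t$-free symmetric function, the $q\leftrightarrow t$ swap treats $z$ as an inert scalar, so the symmetry holds coefficient-by-coefficient in $z$, which is exactly the statement $P_\lambda(z;q,t)=P_\lambda(z;t,q)$. One could alternatively phrase the whole argument at the level of each individual coefficient of $z^d$, i.e. prove $\langle \nabla(s_\lambda),h_d e_{n-d}\rangle$ is $q,t$-symmetric for each $d$ and then sum; I would likely present it this way since it makes the independence from $z$ manifest and isolates the single nontrivial input, namely the $q\leftrightarrow t$ symmetry of $\nabla$ combined with $\tilde H_\mu(X;q,t)=\tilde H_{\mu'}(X;t,q)$. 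No genuine obstacle arises here; the content is entirely in \Cref{thm:MacdonaldIntro}, and this corollary is a short formal consequence.
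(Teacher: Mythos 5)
Your proposal is correct and follows essentially the same route as the paper: reduce via \Cref{thm:MacdonaldIntro} to the $q\leftrightarrow t$ invariance of $\langle\nabla(s_\lambda),h_d e_{n-d}\rangle$, which rests on the eigenvalue swap $q^{n(\mu')}t^{n(\mu)}\mapsto t^{n(\mu')}q^{n(\mu)}$ together with $\tilde H_\mu(\mathbf{x};q,t)=\tilde H_{\mu'}(\mathbf{x};t,q)$ --- exactly the content the paper isolates as \Cref{thm:qtsym} (which it proves rather than cites). The only blemish is your parenthetical claim that $\nabla$ is self-adjoint for the Hall inner product (it is self-adjoint for the $*$-scalar product, not the Hall pairing, since the $\tilde H_\mu$ are not Hall-orthogonal), but you do not use this, so the argument stands.
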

The proof of Corollary \ref{cor:symmetry} is also found in Section \ref{sec:Mac}. As is often the case in $q,t$-Catalan combinatorics, we do not have a combinatorial proof the joint symmetry of the area and dinv statistics in general.

A special role is played by the partitions of square shape, $\lambda =(n^n)$, in which case domino tilings of $R_{\lambda}$ are in bijection with domino tilings $\AD_n$ (see Section \ref{sec:exADASM}). For convenience, we write 
\begin{equation}\label{eq.ADdef}
    \AD_n(z;q,t)= P_{(n^n)}(z;q,t).
\end{equation}
Surprisingly, the combinatorics of alternating sign matrices and domino shuffling introduced in \cite{EKLP1,EKLP2} interact well with the statistics $\area$, $\dinv$ and $\diags$ coming from $q,t$-Catalan combinatorics. We use this machinery to prove the following product formula for $\AD_n(z;q,t)$.  
\begin{theorem}\label{thm:introAD}
    When $\lambda = (n^n)$ is a partition of square shape, then
    \begin{equation}\label{eq.ADproduct}
        \AD_n(z;q,t) = (qt)^{n^2(n-1)/2}\prod_{i,j\geq0 \text{ and }i+j<n}(z+q^it^j).
    \end{equation}
\end{theorem}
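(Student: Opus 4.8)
The plan is to prove the product formula by induction on $n$, using domino shuffling as the recursive engine. The base case $n=1$ is a direct check: $\AD_1(z;q,t)$ counts the two domino tilings of the Aztec diamond of order $1$, which should contribute $z + 1 = z + q^0t^0$, matching the right-hand side since the prefactor $(qt)^{0}=1$ and the product ranges over the single pair $(i,j)=(0,0)$. For the inductive step, I would analyze how the three statistics $\area$, $\dinv$, and $\diags$ transform under one step of the domino shuffling algorithm of Elkies--Kuperberg--Larsen--Propp, which builds tilings of $\AD_n$ from tilings of $\AD_{n-1}$ by (i) destroying "bad blocks" of two parallel dominoes, (ii) sliding the surviving dominoes, and (iii) filling the newly created empty $2\times 2$ squares, each in one of two ways. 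The key combinatorial content is to show that passing from $\AD_{n-1}(z;q,t)$ to $\AD_n(z;q,t)$ multiplies by $(qt)^{\text{something}}$ (accounting for how the global area/dinv shift) and by the factor $\prod_{i+j=n-1}(z+q^it^j)$ coming from the independent binary choices in the fill-in step: a diagonal step versus a non-diagonal configuration in each new $2\times 2$ block should contribute precisely a factor $z + q^i t^j$ with $(i,j)$ determined by the block's position along the new boundary diagonal.

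Concretely, the main steps are: First, recall (from Section~5, which I am assuming) the precise description of the shuffling bijection and how it interacts with the height-function/path description of tilings of $R_{(n^n)}$, so that each intermediate tiling of $\AD_{n-1}$ lifts, under the fill-in, to a set of $2^{\#\text{empty blocks}}$ tilings of $\AD_n$. Second, track the statistics: show that sliding and block-destruction change $\area$, $\dinv$, $\diags$ in a way that is uniform across all tilings (contributing the monomial prefactor correction from $(qt)^{(n-1)^2(n-2)/2}$ to $(qt)^{n^2(n-1)/2}$), while the fill-in step is where the new variables enter. Third, argue that the new empty $2\times 2$ blocks, when filled, are indexed by lattice points $(i,j)$ with $i+j = n-1$, $i,j\ge 0$, and that choosing the "diagonal" fill multiplies the weight by $z$ while the other fill multiplies by $q^i t^j$ (or vice versa), and crucially that these choices are independent, yielding the product $\prod_{i+j=n-1}(z+q^it^j)$. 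Multiplying the inductive hypothesis by this factor and the prefactor correction gives \eqref{eq.ADproduct}.

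The hard part will be step two and three together: pinning down exactly how $\area$, $\dinv$, and $\diags$ behave under a shuffling move. Domino shuffling is notoriously delicate to combine with fine statistics, and the statistics here come from the $q,t$-Catalan world (via the path bijection of Section~4), not from the usual Aztec-diamond weightings, so their compatibility with shuffling is exactly the surprising phenomenon the theorem is asserting. I expect to need a careful local analysis: fix a tiling $T$ of $\AD_n$, locate the $2\times 2$ blocks where the fill-in happened, and compare $\area(T)$, $\dinv(T)$, $\diags(T)$ with the corresponding statistics of its shuffle-image $T'$ in $\AD_{n-1}$, showing the difference splits as a sum of a global term (independent of $T$) plus a sum of local contributions $q^it^j$ or $1$ over the filled blocks, with $\diags$ simply counting how many blocks received the diagonal fill. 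Once this local dictionary is established, the generating-function identity follows by summing over all tilings of $\AD_{n-1}$ and all $2^{\#}$ fill-in choices. As a sanity check, the resulting formula must be symmetric in $q$ and $t$ — which it manifestly is, since $\prod_{i+j<n}(z+q^it^j)$ is invariant under swapping $q\leftrightarrow t$ and $(i,j)\leftrightarrow(j,i)$ — giving the promised combinatorial proof of Corollary~1.5 in the square case; I would flag this consistency with Corollary~1.5 at the end.
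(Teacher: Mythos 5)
Your overall skeleton --- induction on $n$ with domino shuffling as the recursive engine, a direct check at $n=1$, and a factorization of the fill-in choices into linear factors --- is exactly the strategy of the paper. But the one concrete mechanism you propose for producing the new linear factors is not how the argument actually closes, and as stated it would fail. You claim the newly created $2\times 2$ blocks are indexed by lattice points $(i,j)$ on the diagonal $i+j=n-1$, with each block contributing $z+q^it^j$ according to its position. In reality the number of created blocks is $m+n+1$, where $m$ is the (tiling-dependent) number of destroyed blocks, and their positions are not fixed; each created block contributes a factor $z+t^{\ell}$ where $\ell$ is a dinv-type statistic of that block, with \emph{no} $q$-power at all. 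The paper's resolution has two ingredients you do not anticipate: first, a cancellation (\cref{lem.dinv-odd/even}) showing that, as multisets, the dinv contributions of the $m+n+1$ created blocks minus those of the $m$ destroyed blocks equal exactly $\{0,1,\dots,n\}$ --- this is proved by pairing $1$'s with $-1$'s in the reading word of the associated alternating sign matrix, so the ASM machinery is essential, not incidental; second, the $q$-exponents in the final product do not come from block positions but from the substitution $z\mapsto zq^{-1}$ applied to the inductive hypothesis, which bumps every old factor $(z+q^it^j)$ to $(z+q^{i+1}t^j)$ while the genuinely new factors enter as $(z+t^i)$, $i=0,\dots,n$.

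The other gap is that the parts you flag as ``the hard part'' --- how $\area$ and $\dinv$ transform under shuffling --- carry most of the weight and are not routine local bookkeeping. In the paper these are \cref{prop:area-shuffle} and \cref{prop.dinv-shuffle}, and both are proved by translating the domino statistics into statistics on the associated extended ASM (the level, northeastern sum, and top partial sum of entries, via \cref{lem.nesum} and \cref{lem.dinv-tpsum}) and then exploiting the row/column sum conditions of an ASM to get closed-form shifts such as $\area(T_e)=\area(T_o)-\diags(T_o)+n(2n+1)-m$. Note in particular that the area shift is \emph{not} uniform across tilings --- it depends on $\diags(T_o)$ and $m$ --- which is precisely why the substitution $z\mapsto zq^{-1}$ is needed to absorb the $-\diags(T_o)$ term; your assumption that sliding and block-destruction change the statistics ``in a way that is uniform across all tilings'' is therefore false for $\area$. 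So while your plan points in the right direction, it is missing the ASM-based lemmas that make the statistic-tracking tractable, and its account of where the factors $(z+q^it^j)$ come from would need to be replaced by the cancellation-plus-substitution argument.
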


Specializing all variables to $1$, we recover the well-known fact that the number of domino tilings of $\AD_n$ is $2^{n(n+1)/2}$. This product formula is similar to the formula given in \cite{EKLP1,EKLP2} weighted according to different statistics. Specifically, our statistic $\diags(T)$ is the same as their statistic $v(T)$ counting vertical dominoes up to a simple change of variables. Our $\area(T)$ statistic is similar to their rank statistic $r(T)$, although they are not the same, and our $\dinv(T)$ statistic appears to be completely new. In the recent work \cite{CGK22}, Corteel, Gitlin, and Keating introduced $k$-tilings ($k$-tuples of domino tilings) of $\AD_n$ and computed their generating polynomials. One of the statistics they considered is based on the number of ``coupled dominoes'' (see \cite[Section 2.4]{CGK22}), which is analogues to our ``domino pairs'' (see Figure \ref{fig:dominopairs}), although the two concepts are different. 

Our proof of Theorem \ref{thm:introAD} is entirely combinatorial and the formula is evidently symmetric in $q$ and $t$, so we in particular obtain a direct proof (not using Theorem \ref{thm:MacdonaldIntro}) of the $q,t$-symmetry in Corollary \ref{cor:symmetry} for partitions of square shape.

The rest of this paper is organized as follows. In Section \ref{sec:prelim}, we define statistics on $\lambda$-families of Schr\"oder paths and domino tilings, and state some background information on related combinatorics of alternating sign matrices and domino shuffling. In Section \ref{sec:Mac}, we prove Theorem \ref{thm:MacdonaldIntro} and Corollary \ref{cor:symmetry}. In Section \ref{sec:pfAD}, we discuss the special case when $\lambda=(n^n)$ and prove Theorem \ref{thm:introAD}. In Section \ref{sec:open}, we propose a conjecture regarding the divisibility of $P_{\lambda}(z;q,t)$.


\section{Preliminaries}\label{sec:prelim}

We begin this section by defining the statistics on $\lambda$-families of Schr\"oder paths $\mathcal{S}_{\lambda}$ (Section \ref{sec:statistics}). In Section \ref{sec:prelimLW}, we present the Loehr--Warrington formula which will be used to prove Theorem \ref{thm:MacdonaldIntro}. In Section \ref{sec:ADbijection}, we introduce the bijection between domino tilings of $R_{\lambda}$ and $\mathcal{S}_{\lambda}$ and then interpret these statistics defined in Section \ref{sec:statistics} in terms of dominoes. Finally, we review the relation between alternating sign matrices and domino tilings of the Aztec diamond (Section \ref{sec:ASMAD}) and the domino shuffling algorithm (Section \ref{sec:introshuffling}).

\subsection{Statistics on $\lambda$-families of Schr\"oder paths}\label{sec:statistics}

We define statistics on $\lambda$-families of Schr\"oder paths building upon the statistics for Schr\"oder paths \cite{EHKK03} and $\lambda$-families of Dyck paths \cite{LW08}. Let $\lambda= (\lambda_1,\dots,\lambda_\ell)$ be a partition. As stated in Section \ref{sec:introduction}, we decompose $\lambda$ by removing successive border strips, and let $n_j=n_j(\lambda)$ denote the number of squares in the border strip ending at the $j$th box from the right (indexed from $j=0$) in the top row.

The \emph{dinv adjustment} of $\lambda$ is defined to be the quantity $\adj(\lambda) = \sum_{j:n_j>0}(\lambda_1-1-j)$ and we set\footnote{The definitions of $\adj$ and $\sgn$ differ slightly from, but are equivalent to, those in \cite{LW08}.} $\sgn(\lambda)=(-1)^{\adj(\lambda)}$. In fact, $\adj(\lambda)$ is the total number of times a border strip in the decomposition of $\lambda$ crosses a vertical boundary of a unit square in $\lambda$. In the example $\lambda = (4,4,3,3,3,1)$ depicted in Figure \ref{fig:lambda}, $\adj(\lambda)=3+2+0 = 5$ and $\sgn(\lambda)=-1$.

As defined in the introduction, a \emph{$\lambda$-family of Schr\"oder paths} in $\mathcal{S}_{\lambda}$ is a sequence $\pi = (\pi_0,\dots,\pi_k)$ where $k=\lambda_1-1$, such that
\begin{enumerate}
    \item $\pi_j$ is a lattice path consisting of north $(0,1)$ steps, east $(1,0)$ steps, and diagonal $(1,1)$ steps starting at $(j,j)$ and ending at $(j+n_j,j+n_j)$, which never goes strictly below the diagonal $y=x$, and
    \item no two distinct paths $\pi_i$ and $\pi_j$ intersect.
\end{enumerate}
Note that the construction of the vector $(n_0,\dots,n_k)$ encoding the sizes of border strips removed from $\lambda$ ensures that there exists a $\lambda$-family of Schr\"oder paths for any partition $\lambda$. 

Given $\pi = (\pi_0,\dots,\pi_k) \in \mathcal{S}_\lambda$, we now define the first statistic
\begin{equation}\label{eq.defdiags}
    \diags(\pi) = \sum_{j=0}^{k} \diags(\pi_j),
\end{equation}
where $\diags(\pi_j)$ is the number of diagonal $(1,1)$ steps in the path $\pi_j$. We write $\mathcal{S}_{\lambda,d}$ for the subset of $\mathcal{S}_\lambda$ consisting of $\lambda$-families $\pi$ with $\diags(\pi) = d$. For example, the $\lambda$-family of Schr\"oder paths $\pi$ in Figure \ref{fig:paths} has $\diags(\pi) = 2+2+1=5.$

In order to define the other statistics on these families, we encode the paths by an array following the notation for $\lambda$-families of Dyck paths \cite{LW08}, with added decorations to encode the diagonal steps. A triangle with vertices $(c, d)$, $(c+1,d)$ and $(c+1,d+1)$ for some integers $c$ and $d$ is called an \emph{area triangle}. Given $\pi=(\pi_0,\dots,\pi_k) \in \mathcal{S}_\lambda$ and $j=0,1,\dots,k$, we define $a^{(j)}_i = a^{(j)}_i(\pi)$ for each $j\leq i\leq j+n_j-1$ to be the number of area triangles in the region bounded below by $y=i$, bounded above by $y=i+1$, bounded to the left by $\pi_j$, and bounded to the right by $y=x$. For values of $i$ outside of this range, $a^{(j)}_i$ is undefined. For each $i$ in this range, $\pi_j$ has either a single north step or a single diagonal step in the row bounded by $y=i$ and $y=i+1$. For the $i$'s for which $\pi_j$ has a diagonal step, we decorate $a^{(j)}_i$ with an overline. This collection of decorated numbers is called the \textit{area array} of $\pi$, and completely encodes the family of paths. 

We define the second statistic on $\pi = (\pi_0,\dots,\pi_k) \in \mathcal{S}_\lambda$ as follows:
\begin{equation}\label{eq.defarea}
    \area(\pi) = \sum_{j=0}^{k} \area(\pi_j),
\end{equation}
where $\area(\pi_j)$ is the total number of area triangles below $\pi_j$ and above the diagonal $y=x$. In other words, $\area(\pi)$ is the sum of all the entries in the area array of $\pi$, ignoring any decorations. 

The $\lambda$-family of Schr\"oder paths $\pi$ depicted in Figure \ref{fig:paths} has the following area array, where $\cdot$ denotes an undefined value. One can check that $\area(\pi) = 27+11+0 = 38$.
\begin{equation*}\label{eqn:areaarray}
 \begin{blockarray}{cccccccccc}
    i: & 0 & 1 & 2 & 3 & 4 & 5 & 6 & 7 & 8 \\
   \begin{block}{(cccccccccc)}
     a^{(0)}: & 0 & 1 & 2 & 3 & 4 & 5 & \overline{5} & 4 & \overline{3} \\
     a^{(1)}: & \cdot & 0 & 1 & 2 & \overline{3} & 3 & \overline{2} & \cdot & \cdot  \\
     a^{(2)}: & \cdot & \cdot & \cdot & \cdot & \cdot & \cdot & \cdot & \cdot & \cdot \\
     a^{(3)}: & \cdot & \cdot & \cdot & 0 & 0 & \overline{0} & \cdot & \cdot & \cdot  \\
   \end{block}
 \end{blockarray}
\end{equation*}

To define our last statistic, the dinv statistic, we designate certain pairs of steps in the paths $\pi \in \mathcal{S}_{\lambda}$ to be \emph{dinv pairs}, and record the total number of such pairs. It is more convenient to define these pairs directly from the area array, where we associate each entry $a^{(j)}_i$ with the north or diagonal step (depending on whether or not the entry is decorated) of the path $\pi_j$ in the region $i\leq y\leq i+1$. We say that a pair of entries $a^{(u)}_b$ and $a^{(v)}_c$ in the area array of $\pi$ is a \emph{dinv pair} in $\pi$ if either of the following two cases hold:
\begin{enumerate}
    \item[] Case 1. $a^{(u)}_b = a^{(v)}_c$ with $b<c$ and $a^{(u)}_b$ is not decorated, or
    \item[] Case 2. $a^{(u)}_b = a^{(v)}_c + 1$ with $b\leq c$ and $a^{(v)}_c$ is not decorated.
\end{enumerate}
For equalities of entries in the area array in the conditions above, we consider only the numerical values and ignore any decorations. We then define the dinv statistic of $\pi \in \mathcal{S}_{\lambda}$ to be the quantity
\begin{equation}\label{eq.defdinv}
    \dinv(\pi) = \adj(\lambda) + \left| \{ \text{dinv pairs in } \pi \} \right|.
\end{equation}
Our running example $\pi$ in Figure \ref{fig:paths} has $\dinv(\pi) = 5+29 = 34$. 

Moreover, we can interpret and visualize dinv pairs as pairs of north or diagonal steps in some, possibly different, paths in $\pi \in \mathcal{S}_{\lambda}$ as described above. Figure~\ref{fig:dinvpairs1} depicts the possible arrangements of pairs of steps that correspond to dinv pairs. The dotted lines indicate the alignment of the pairs of steps relative to some translation of the diagonal $y=x$.

\begin{figure}[h]
    \centering
    \includegraphics[scale=0.15]{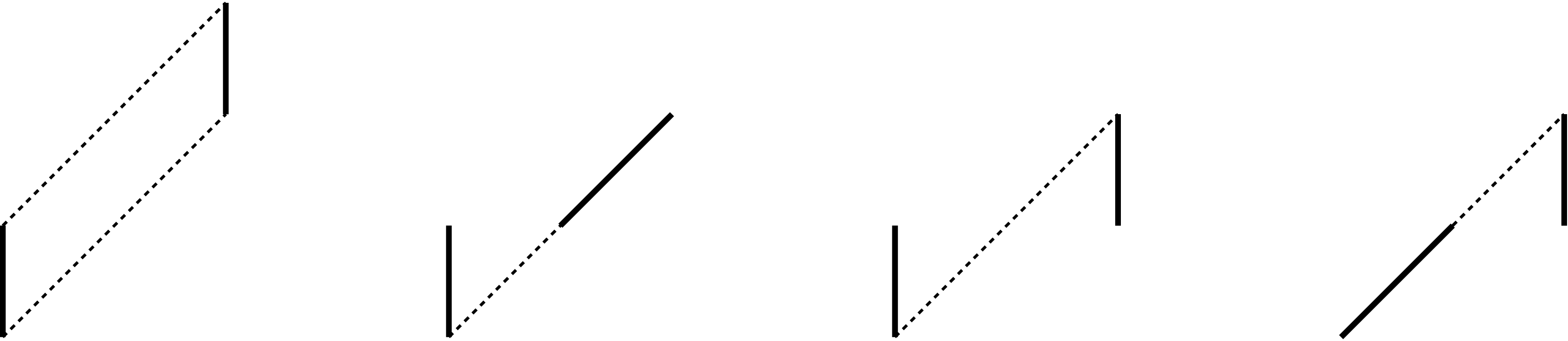}
    \caption{Dinv pairs of steps from left to right corresponding to: Case 1 with $a^{(v)}_c$ undecorated, Case 1 with $a^{(v)}_c$ decorated, Case 2 with $a^{(u)}_b$ undecorated, and Case 2 with $a^{(u)}_b$ decorated.}
    \label{fig:dinvpairs1}
\end{figure}

These statistics are the common generalization of those defined for single Schr\"oder paths and for nested Dyck paths. In the case $\lambda = (1,1,\dots,1)$, a $\lambda$-family of Schr\"oder paths reduces to a single Schr\"oder path, and the statistics we have defined above coincide with those for single Schr\"oder paths \cite{EHKK03}. On the other hand, if $\pi \in \mathcal{S}_{\lambda,d}$ with $d=0$, then it can be considered as a $\lambda$-family of Dyck paths and the area and dinv statistics above coincide with those defined in \cite{LW08}. Both of these cases specialize further to $\pi \in \mathcal{S}_{\lambda ,d}$ for $\lambda = (1,1,\dots,1)$ and $d=0$, that is, single Dyck paths, in which case these statistics recover the area and dinv statistics for the $q,t$-Catalan numbers \cite{GH02}.

\subsection{The Loehr--Warrington Formula}\label{sec:prelimLW}

The Loehr--Warrington formula gives the full monomial expansion of $\nabla(s_\lambda)$ for any partition $\lambda$ as a weighted sum over labeled, weakly-nested $\lambda$-families of Dyck paths. It was conjectured by Loehr and Warrington \cite[Conjecture~2.1]{LW08} and proved recently by Blasiak, Haiman, Morse, Pun, and Seelinger \cite{BHMPS}. We recall the definition of these objects and their statistics below informally. See \cite[Section 2.3]{LW08} for the precise definitions in terms of area and label arrays. A \emph{weakly-nested $\lambda$-families of Dyck paths} is a collection of paths $\pi = (\pi_0,\pi_1,\dots,\pi_k)$ where $k=\lambda_1-1$, such that
\begin{enumerate}
    \item Each path $\pi_j$ is a lattice path from $(j,j)$ to $(j+n_j,j+n_j)$ made up of north $(0,1)$ steps and east $(1,0)$ steps that never goes below the diagonal $y=x$,
    \item two distinct paths $\pi_i$ and $\pi_j$ can never cross and can never share east steps but may touch and may share north steps, and
    \item no path $\pi_j$ meets the starting point of any other path $\pi_i$.
\end{enumerate}

A \emph{labeling} of a weakly-nested $\lambda$-family of Dyck paths is an assignment of a positive integer to each north step in all the paths of the family such that
\begin{enumerate}
    \item On any sequence of consecutive north steps of any path $\pi_j$, the labels on these north steps strictly increase from bottom to top, and

    \item whenever there is a north steps labeled $r$ directly below a north step labeled $r'$ (i.e. the ending point of the north step labeled $r$ is the starting point of the north step labeled $r'$) on some paths $\pi_i$ and $\pi_j$ respectively with $j<i$, we have $r'\leq r$.
\end{enumerate}

Let $\LNDP_\lambda$ denote the set of all pairs $(\pi,w)$ where $\pi$ is a weakly-nested $\lambda$-family of Dyck paths with labeling $w$. An example of a labeled weakly-nested $\lambda$-family of Dyck paths is shown in Figure \ref{fig:labeledfamily}, with labels on the outermost path appearing on the left of the corresponding edge, and labels on the other paths on the right.

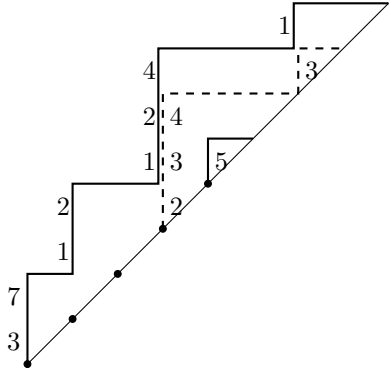
\begin{figure}[h]
    \centering
    \begin{tikzpicture}[scale=0.6]

        \draw (0,0)--(8,8);
        \draw[fill=black] (0,0) circle (.5ex);
        \draw[fill=black] (1,1) circle (.5ex);
        \draw[fill=black] (2,2) circle (.5ex);
        \draw[fill=black] (3,3) circle (.5ex);
        \draw[fill=black] (4,4) circle (.5ex);
        
        \draw[thick] (0,0)--(0,2)--(1,2)--(1,4)--(2.9,4)--(2.9,7)--(5.9,7)--(5.9,8)--(8,8);
        \draw[thick,dashed] (3,3)--(3,6)--(6,6)--(6,7)--(7,7);
        \draw[thick] (4,4)--(4,5)--(5,5);
    
        \node at (-.3,.5) {3};
        \node at (-.3,1.5) {7};
        \node at (.8,2.5) {1};
        \node at (.8,3.5) {2};
        \node at (2.7,4.5) {1};
        \node at (2.7,5.5) {2};
        \node at (2.7,6.5) {4};
        \node at (5.7,7.5) {1};
    
        \node at (3.3,3.5) {2};
        \node at (3.3,4.5) {3};
        \node at (3.3,5.5) {4};
        \node at (6.3,6.5) {3};
    
        \node at (4.3,4.5) {5};
    \end{tikzpicture}
    \caption{A labeled weakly-nested $(5,3,3,2)$-family of Dyck paths.}
    \label{fig:labeledfamily}
\end{figure}

The area of a Dyck path is the number of complete unit boxes (or equivalently the number of area triangles, as defined in Section \ref{sec:statistics}) underneath the path and above the diagonal $y=x$. The area of a weakly-nested $\lambda$-family of Dyck paths is the sum of the areas of the paths in the family. For a family $\pi$ with labeling $w$, the statistic $\area(\pi,w)$ is defined as the area of the family $\pi$, not depending on the labeling of the family. For example, the area of the $\lambda$-family in Figure \ref{fig:labeledfamily} is $14$.

The dinv statistic of a labeled family, however, does depend on the labeling. Let $(\pi,w) \in \LNDP_{\lambda}$ and $N_1$ and $N_2$ be any two north steps from any paths in $\pi$ with distinct labels $L_1$ and $L_2$ respectively, where $L_1<L_2$. We say that this pair is a \textit{dinv pair} for $(\pi,w)$ if either of the following conditions hold:
\begin{enumerate}
    \item The north steps are on the same diagonal (some line $y=x+c$ meets the bottom-most points of both $N_1$ and $N_2$) and $N_2$ lies weakly to the left of $N_1$.
    \item There is some line $y=x+c$ that meets the top-most point of $N_2$ and the bottom-most point of $N_1$, and $N_2$ lies strictly to the left of $N_1$.
\end{enumerate}
Figure \ref{fig:dinvpairs} shows two families of pairs of vertical steps that are dinv pairs for any labeling with $a<b$.
\begin{figure}[h]
    \centering
    \begin{tikzpicture}[scale=0.5]

        \draw[thick] (0,0)--(0,1);
        \draw[thick] (.1,0)--(.1,1);
    
        \node at (-.5,.5) {$a$};
        \node at (.5,.5) {$b$};
    \end{tikzpicture}
    \begin{tikzpicture}[scale=0.5]

        \draw[thick] (0,0)--(0,1);
        \draw[dashed,thin] (0,0)--(1,1);
        \draw[dashed,thin] (0,1)--(1,2);
        \draw[thick] (1,1)--(1,2);
    
        \node at (-.5,.5) {$a$};
        \node at (1.5,1.5) {$b$};
    \end{tikzpicture}
    \begin{tikzpicture}[scale=0.5]

        \draw[thick] (0,0)--(0,1);
        \draw[dashed,thin] (0,0)--(2,2);
        \draw[dashed,thin] (0,1)--(2,3);
        \draw[thick] (2,2)--(2,3);
    
        \node at (-.5,.5) {$a$};
        \node at (2.5,2.5) {$b$};
    \end{tikzpicture}
    \begin{tikzpicture}[scale=0.5]

        \draw[thick] (0,0)--(0,1);
        \draw[dashed,thin] (0,0)--(3,3);
        \draw[dashed,thin] (0,1)--(3,4);
        \draw[thick] (3,3)--(3,4);
    
        \node at (-.5,.5) {$a$};
        \node at (3.5,3.5) {$b$};
        \node at (4.5,1.5) {$\cdots$}; 
    \end{tikzpicture}
    
    \begin{tikzpicture}[scale=0.5]

        \draw[thick] (0,0)--(0,1);
        \draw[dashed,thin] (0,0)--(1,1);
        \draw[thick] (1,0)--(1,1);
    
        \node at (-.5,.5) {$b$};
        \node at (1.5,.5) {$a$};
    \end{tikzpicture}
    \begin{tikzpicture}[scale=0.5]

        \draw[thick] (0,0)--(0,1);
        \draw[dashed,thin] (0,0)--(2,2);
        \draw[thick] (2,1)--(2,2);
    
        \node at (-.5,.5) {$b$};
        \node at (2.5,1.5) {$a$};
    \end{tikzpicture}
    \begin{tikzpicture}[scale=0.5]

        \draw[thick] (0,0)--(0,1);
        \draw[dashed,thin] (0,0)--(3,3);
        \draw[thick] (3,2)--(3,3);
    
        \node at (-.5,.5) {$b$};
        \node at (3.5,2.5) {$a$};
        \node at (4.5,1.5) {$\cdots$}; 
    \end{tikzpicture}
    \caption{Dinv pairs for any labeling with $a<b$}
    \label{fig:dinvpairs}
\end{figure}
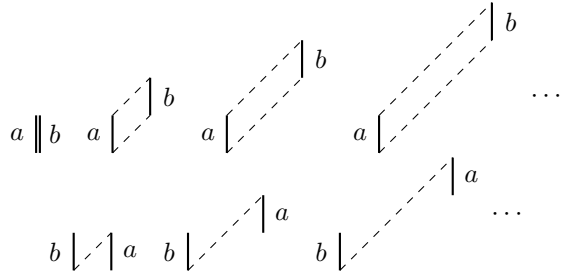

For any pair $(\pi,w) \in \LNDP_{\lambda}$, define the statistic $\dinv(\pi,w)=\adj(\lambda)+ \left| \{\text{dinv pairs in } (\pi,w)\} \right|$. For example, the labeled family in Figure \ref{fig:labeledfamily} has $\dinv(\pi,w)=5+14=19$. Finally, let $x^w$ denote the monomial in the variables $\{x_i\}_{i \geq 1}$ such that the exponent on each $x_i$ is the number of $i$'s that appear in the labeling $w$. 

The following theorem presents the Loehr--Warrington formula, it will be used to prove Theorem \ref{thm:MacdonaldIntro} later in Section \ref{sec:Mac}. 
\begin{theorem}[\cite{LW08} and \cite{BHMPS}]\label{thm:LW}
    For any partition $\lambda$,
    \begin{equation}\label{eq.LWformula}
        \nabla(s_\lambda) = \sgn(\lambda)\sum_{(\pi,w)\in \LNDP_\lambda} q^{\area(\pi,w)}t^{\dinv(\pi,w)}x^w.
    \end{equation}
\end{theorem}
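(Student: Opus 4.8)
Theorem \ref{thm:LW} is precisely the Loehr--Warrington conjecture \cite[Conjecture~2.1]{LW08}, whose proof in full generality is the main result of \cite{BHMPS}, so I would not reprove it from scratch but rather indicate the shape the argument takes. Note first that the case $\lambda=(1^n)$ is forced to be the Shuffle Theorem of Carlsson and Mellit: there $e_n=s_{(1^n)}$, $\adj(\lambda)=0$ and hence $\sgn(\lambda)=1$, and a weakly-nested family collapses to a single labeled Dyck path, so \eqref{eq.LWformula} reads $\nabla(e_n)=\sum_{(\pi,w)}q^{\area(\pi,w)}t^{\dinv(\pi,w)}x^w$. Any proof of the general statement must therefore contain the Shuffle Theorem, and the natural plan is to bootstrap from it.

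The first step is to rewrite the right-hand side of \eqref{eq.LWformula} in the LLT basis. Fixing the underlying unlabeled weakly-nested $\lambda$-family $\pi$ and summing over compatible labelings $w$ only, one sees that $\area(\pi,w)$ depends on $\pi$ alone, while $\sum_w t^{|\{\text{dinv pairs of }(\pi,w)\}|}x^w$ is, up to a monomial in $t$ recording $\adj(\lambda)$, a (skew) LLT polynomial attached to the tuple of diagonal positions of the north steps of $\pi$ --- the two dinv-pair patterns of Figure \ref{fig:dinvpairs} are exactly the attacking-pair relations defining the LLT statistic. Hence the right-hand side of \eqref{eq.LWformula} becomes an explicit combination $\sgn(\lambda)\sum_{\pi}q^{\area(\pi)}\,\mathrm{LLT}_{\pi}(x;t)$, and the claim reduces to computing the LLT expansion of $\nabla(s_\lambda)$ and matching coefficients.

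The second step is to prove that LLT-level identity. One natural route expands $s_\lambda$ in the modified Macdonald basis, applies $\nabla\tilde H_\mu=q^{n(\mu')}t^{n(\mu)}\tilde H_\mu$, inserts the Haglund--Haiman--Loehr monomial expansion of $\tilde H_\mu$, and collapses the resulting alternating sum by a plethystic, sign-reversing argument; this is the strategy by which the single-path Schr\"oder formula follows from the single-path Shuffle Theorem, and it is the template we imitate in Section \ref{sec:Mac} to deduce Theorem \ref{thm:MacdonaldIntro} from Theorem \ref{thm:LW}. The route that actually succeeds unconditionally, carried out in \cite{BHMPS}, instead realizes both sides as matrix coefficients of compositions of the raising and lowering operators of the Carlsson--Mellit Dyck-path algebra (equivalently, of the elliptic Hall algebra), and uses the rigidity of that algebra --- uniqueness of solutions to the defining functional equations --- to force the identity, building $\lambda$ up one cell at a time with the Shuffle Theorem as the base case.

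The main obstacle is the point that kept this conjecture open for more than a decade: there is no elementary combinatorial handle on $\nabla$, and even after the reduction to LLT polynomials the final identification needs the full strength of the machinery of \cite{BHMPS}. I would therefore treat that identification as a black box supplied by \cite{BHMPS}, with \cite{LW08} providing the reduction and the special cases established there. This costs nothing for the present paper, where Theorem \ref{thm:LW} enters only as the input to the proof of Theorem \ref{thm:MacdonaldIntro}.
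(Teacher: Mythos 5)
The paper offers no proof of this statement: it is imported wholesale, attributed to \cite{LW08} (where it was conjectured) and \cite{BHMPS} (where it was proved), exactly as you propose to do by treating the \cite{BHMPS} identification as a black box. Your added sketch of the external argument (the $\lambda=(1^n)$ specialization to the Shuffle Theorem, the LLT rewriting of the right-hand side as in \cite[Theorem 2.2]{LW08}, and the elliptic Hall algebra machinery of \cite{BHMPS}) is consistent with the literature but goes beyond what the paper records, which is simply the citation.
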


\subsection{Bijection between Domino Tilings and Schr\"oder-like Paths}\label{sec:ADbijection}

In general, there is a bijection \cite{LRS01} between domino tilings of a region $R$ on the square lattice and families of non-intersecting paths contained in $R$. In particular, domino tilings of $R_\lambda$ are in bijection with $\lambda$-families of Schr\"oder paths. This bijection is described below.

Given a domino tiling of $R_\lambda$, the corresponding family of paths is given by decorating each of the four types of dominoes that can appear relative to the checkerboard coloring with a (possibly empty) path step as shown in Figure \ref{fig:key}. Regarding each of these path steps as oriented from left to right, each step enters a domino in a black square and exits a domino in a white square. The union of all of these individual path steps therefore forms a collection of non-intersecting paths in $R_\lambda$. The starting points of these paths are the black squares in $R_\lambda$ such that the white square immediately to its left is not in $R_\lambda$ and the ending points of these paths are the white squares in $R_\lambda$ such that the black square immediately to its right is not in $R_\lambda$. 
\begin{figure}[hbt!]
    \centering
    \includegraphics[scale=0.28]{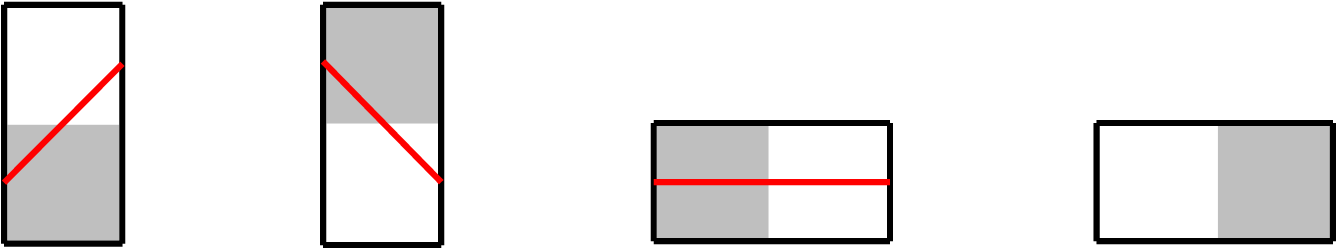}
    \caption{The four types of dominoes and their corresponding path steps.}
    \label{fig:key}
\end{figure} 

One can view the resulting family of paths as $\lambda$-family of Schr\"oder paths by a change of coordinates that amounts to rotating $45^\circ$ counterclockwise and rescaling so that the longest path begins at $(0,0)$ and ends at $(n_0,n_0)$. More precisely, the corresponding $\lambda$-family of Schr\"oder paths is the image of the paths in $R_\lambda$ under the unique affine transformation that takes the points $(-n_0,1/2), (0,n_0+1/2),$ and $(n_0,1/2)$ to $(0,0), (0,n_0),$ and $(n_0,n_0)$, respectively. See Figures \ref{fig:paths} and \ref{fig:tilingpaths}. Conversely, given a $\lambda$-family of Schr\"oder paths, we can undo the affine transformation above to obtain a family of paths in $R_\lambda$. We then cover $R_{\lambda}$ with dominoes based on the four configurations in Figure \ref{fig:key}. This gives the corresponding domino tiling of $R_{\lambda}$.

Next, we explain how to express the statistics defined in Section \ref{sec:statistics} from the viewpoint of domino tilings of $R_{\lambda}$. Given a domino tiling $T \in \T(R_{\lambda})$ and its corresponding $\lambda$-family of Schr\"oder paths $\pi \in \mathcal{S}_{\lambda}$. Let $V(T)$ (resp., $H(T)$) be the collection of all vertical (resp., horizontal) dominoes in which the bottom (resp., left) unit square is colored black. These two particular types of domino correspond to the up and diagonal steps in $\pi$. For convenience, we also write $V^c(T)$ (resp., $H^c(T)$) for the collection of all vertical (resp., horizontal) dominoes in which the top (resp., right) unit square is colored black.

One can easily see that the diags statistic is given by $\diags(\pi) = \diags(T) = |H(T)|$.

Recall that the area statistic counts the total number of area triangles of each path in $\pi$. This can be expressed as a sum over the ``area'' of each vertical domino in $V(T)$ and horizontal domino in $H(T)$, where each area contribution depends on the $y$-coordinate of that domino. To be more precise, for a vertical domino $d_v \in V(T)$ and a horizontal domino $d_h \in H(T)$ whose bottom edge lies on $y=i$, $(0 \leq i \leq n_0)$, define $\area(d_v) = \area(d_h) = i$. Then the area statistic can be reinterpreted as
\begin{equation}\label{eq.def-domino-area}
    \area(\pi) = \area(T) = \sum_{d_v \in V(T)} \area(d_v) + \sum_{d_h \in H(T)} \area(d_h).
\end{equation}

The dinv pairs defined for $\lambda$-families of Schr\"oder paths (Figure \ref{fig:dinvpairs1}) are equivalent to the domino pairs shown in Figure \ref{fig:dominopairs}. Hence, the dinv statistic can be expressed as
\begin{equation}\label{eq.def-domino-dinv}
    \dinv(\pi) = \dinv(T) = \adj(\lambda) + \left| \{ \text{domino pairs in } T \} \right|.
\end{equation}
The number of domino pairs in $T$ can be expressed as a sum over the ``dinv'' of each vertical domino $d_v$ in $V(T)$, there are two ways to define the dinv statistic on $d_v$ based on the reference vertical domino (type I or type II) shown in each configuration of domino pairs (Figure \ref{fig:dominopairs}). 
\begin{figure}[hbt!]
    \centering
    \subfigure[]{\label{fig:dominopairs}\includegraphics[height=0.13\textwidth]{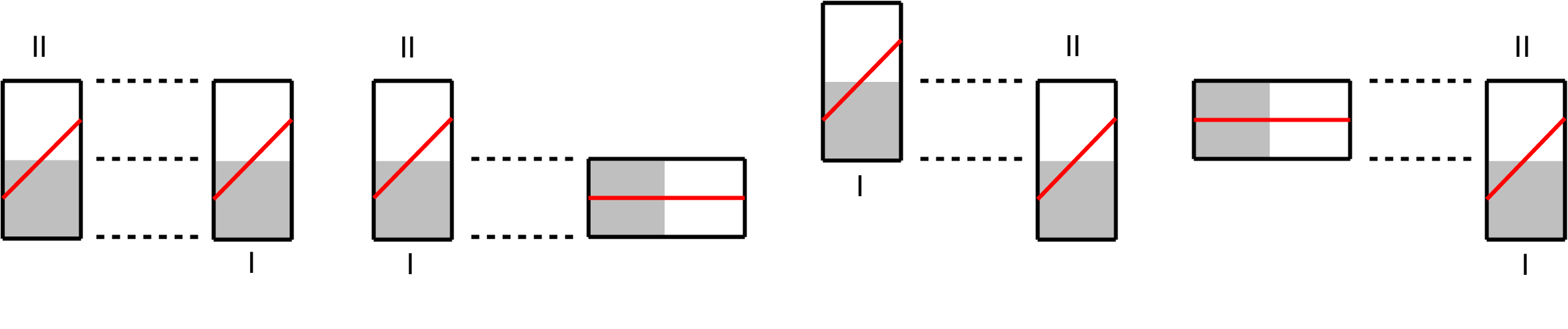}}
    \subfigure[]{\label{fig:dominotypeI}\includegraphics[width=0.45\textwidth]{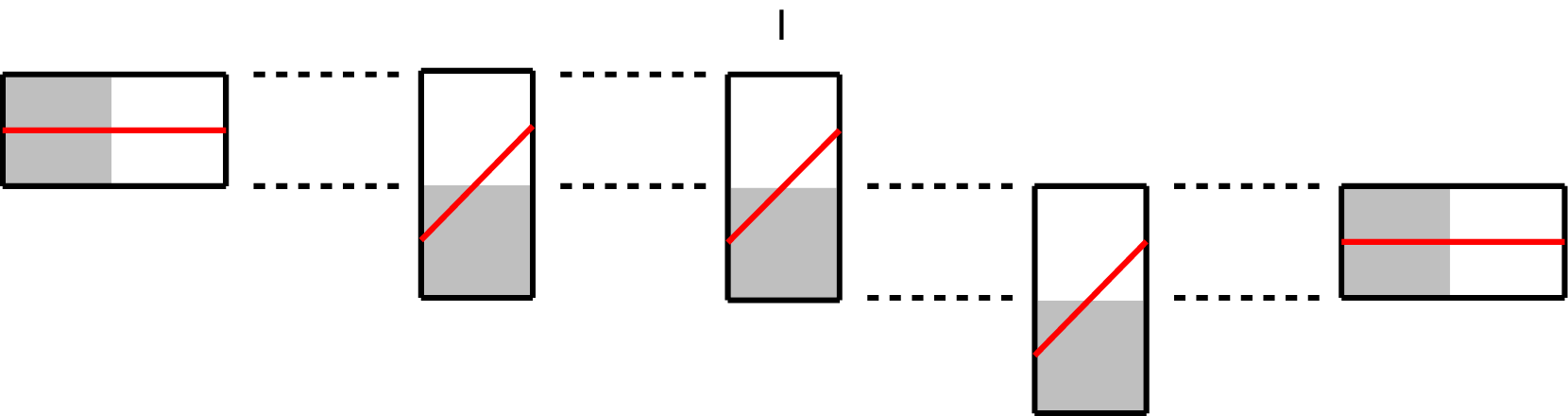}}
    \hspace{7mm}
    \subfigure[]{\label{fig:dominotypeII}\includegraphics[width=0.45\textwidth]{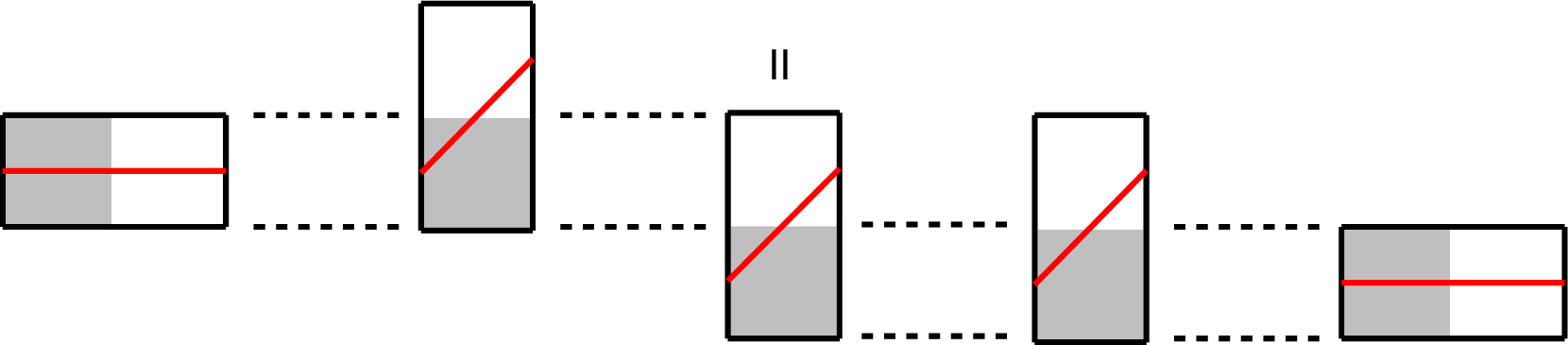}}
    \caption{(a) The four configurations of domino pairs are shown, with dotted lines indicating the alignment of the dominoes. The two types of reference dominoes are labeled as I and II. (b) The type I reference domino and its alignment within the four configurations of domino pairs. (c) The type II reference domino and its alignment within the four configurations of domino pairs.}\label{fig:dp}
\end{figure}

For the type I reference domino $d_v$, we define $\dinv_1(d_v)$ to be the number of these four kinds of dominoes (two on the left and two on the right of $d_v$) aligning in the way shown in Figure \ref{fig:dominotypeI}. To the left of $d_v$, we count the number of horizontal dominoes in $H(T)$ aligning with the top unit square of $d_v$, and vertical dominoes in $V(T)$ whose bottom unit square is aligned with the bottom unit square of $d_v$; to the right of $d_v$, we count the number of horizontal dominoes in $H(T)$ aligning with the bottom unit square of $d_v$, and vertical dominoes in $V(T)$ whose top unit square is aligned with the bottom unit square of $d_v$. Then $\dinv_1(d_v)$ is the sum of these two quantities mentioned above. Summing $\dinv_1(d_v)$ over all the vertical dominoes in $V(T)$ gives the number of domino pairs in $T$. 

Similarly, for the type II reference domino $d_v$, $\dinv_2(d_v)$ is defined to be the number of these four kinds of dominoes aligning in the way shown in Figure \ref{fig:dominotypeII}. The only difference between these two types is the alignment of the vertical dominoes to the left and the right of $d_v$. Overall, we can write the number of domino pairs in $T$ as
\begin{equation}\label{eq.dominopairs}
    \left| \{ \text{domino pairs in } T \} \right| = \sum_{d_v \in V(T)} \dinv_1(d_v) = \sum_{d_v \in V(T)} \dinv_2(d_v).
\end{equation}
These two expressions will be helpful to compute the dinv statistic later in Section \ref{sec:pfAD}.


\subsection{Alternating Sign Matrices and Aztec Diamonds}\label{sec:ASMAD}

An \emph{alternating sign matrix} (ASM) of order $n$ is an $n \times n$ matrix with entries $0, 1$ or $-1$ such that all row and column sums are equal to $1$ and the non-zero entries alternate in sign in each row and column. Mills, Robbins, and Rumsey \cite{MRR83} introduced them in the early 1980s. We write $\ASM_n$ for the set of ASMs of order $n$. The connection between ASMs and domino tilings of the Aztec diamond was initially discovered in \cite{EKLP1, EKLP2}. They showed that the number of domino tilings of the Aztec diamond can be expressed as a weighted enumeration, known as the \emph{$2$-enumeration} of ASMs based on the number of $1$'s or $-1$'s in a matrix. The weighted enumeration is given by
\begin{equation}\label{eq.2-enumasm}
  |\T(\AD_n)| = \sum_{M \in \ASM_n}2^{\mathsf{N}_{+}(M)} = \sum_{M \in \ASM_{n+1}}2^{\mathsf{N}_{-}(M)},
\end{equation}
where $\mathsf{N}_{+}(M)$ (resp., $\mathsf{N}_{-}(M)$) is the number of $1$'s (resp., $-1$'s) in the matrix $M$.
This connection was made explicit later by Ciucu \cite[Section 2]{Ciucu96}\footnote{In \cite[Section 2]{Ciucu96}, this connection was written in the equivalent language of perfect matchings on the planar dual graph of $\AD_n$.}, the idea is stated as follows. 

For two integers $x$ and $y$, we say a (lattice) point $(x,y)$ in $\AD_n$ is \emph{even} (resp., \emph{odd}) if $x+y$ and $n$ have the same (resp., opposite) parity. For each domino tiling $T$ of $\AD_n$, we first add four unit segments connecting points $(0,n)$ with $(0,n+1)$, $(-n,0)$ with $(-(n+1),0)$, $(0,-n)$ with $(0,-(n+1))$, and $(n,0)$ with $(n+1,0)$. Second, we define the maps $M_e$ and $M_o$ below.
\begin{equation}\label{eq.ASMAD}
    M_e(v_e) = \begin{cases}
         1 & \text{if $\deg(v_e)=4$,} \\
         0 & \text{if $\deg(v_e)=3$,} \\
        -1 & \text{if $\deg(v_e)=2$,}
          \end{cases} \text{\quad and \quad} 
    M_o(v_o) = \begin{cases}
         1 & \text{if $\deg(v_o)=2$,} \\
         0 & \text{if $\deg(v_o)=3$,} \\
        -1 & \text{if $\deg(v_o)=4$,}
          \end{cases}
\end{equation}
where $\deg(\cdot)$ is the degree of an even point $v_e$ or odd point $v_o$ by viewing a domino tiling as a graph. Finally, the image of all even (resp., odd) points under the map $M_e$ (resp., $M_o$) forms a square array (which is an ASM) of size $n+1$ (resp., $n$); see Figures \ref{fig:ASMAD1} and \ref{fig:ASMAD2}. We write $M_e(T)$ and $M_o(T)$ for such ASMs that correspond to $T$.
\begin{figure}[h]
    \centering
    \subfigure[]{\label{fig:ASMAD1}\includegraphics[height=0.25\textwidth]{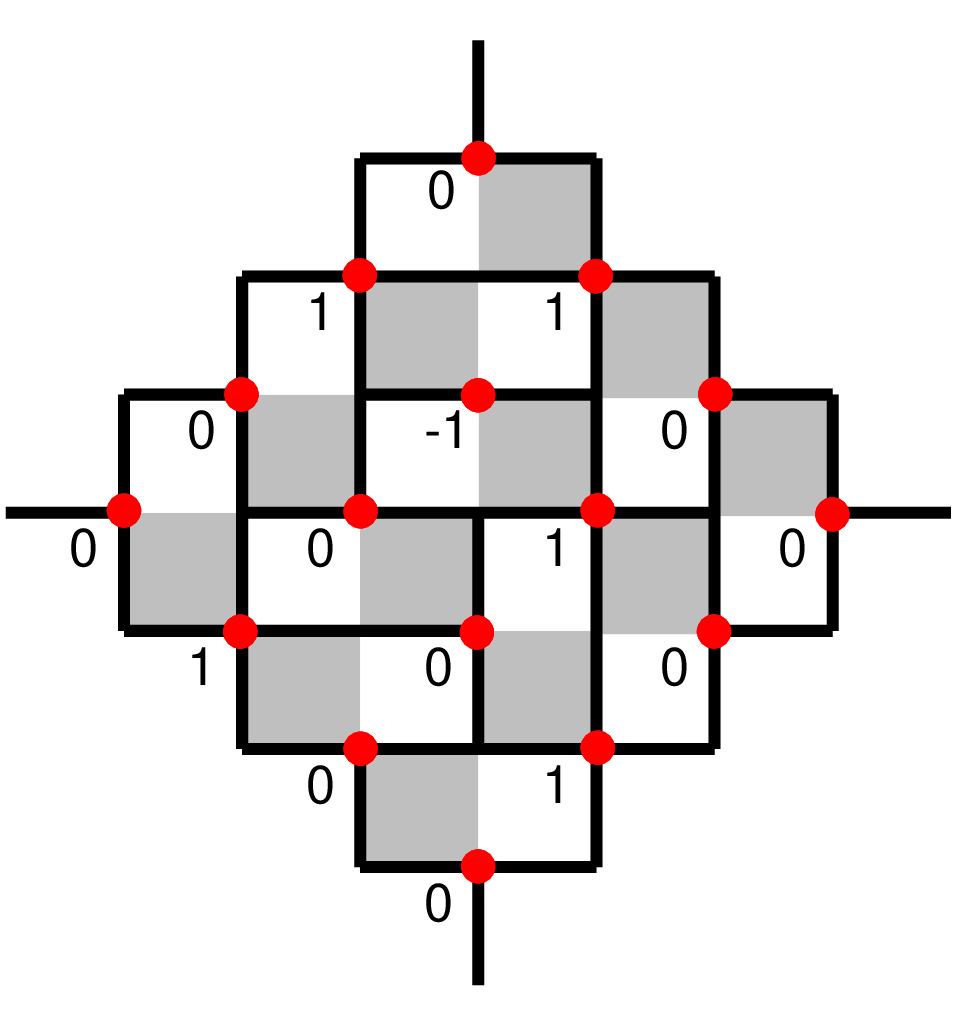}}
    \hspace{10mm}
    \subfigure[]{\label{fig:ASMAD2}\includegraphics[height=0.25\textwidth]{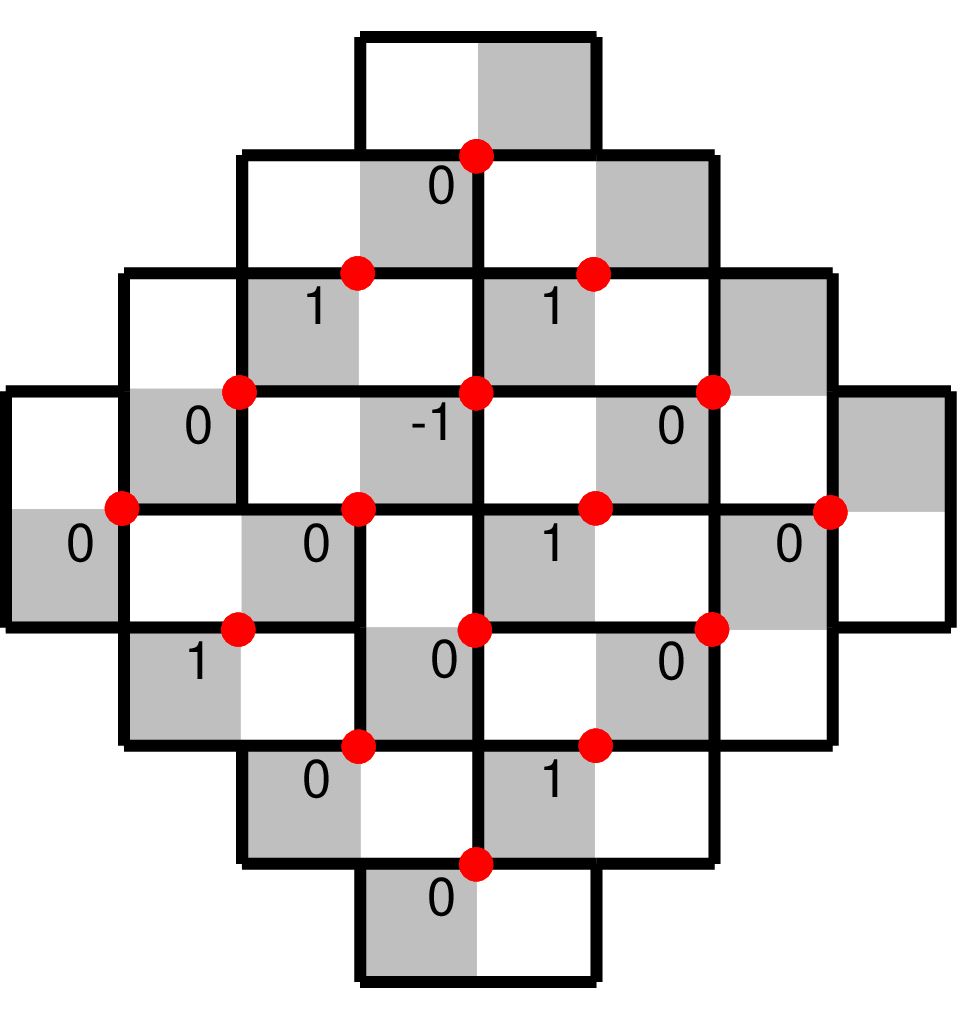}}
    \caption{(a) An example of $M_e(T)$ where $T \in \T(\AD_3)$, even points are marked in red. (b) An example of $M_o(T^{\prime})$ where $T^{\prime} \in \T(\AD_4)$, odd points are marked in red.}
\end{figure}

\subsection{Domino shuffling}\label{sec:introshuffling}

Domino shuffling is first presented in \cite{EKLP1,EKLP2} and is used to inductively enumerate domino tilings of $\AD_n$. It is a process that involves moving dominoes in a specific way, we follow closely their idea and summarize some facts about the domino shuffling below.

Recall that we equip $\AD_n$ the checkerboard coloring so that the unit squares along the top right side are black. Given a domino tiling $T$ of $\AD_n$, if we can find a $2 \times 2$ block in $T$ (consisting of two horizontal or two vertical dominoes), then we say this $2 \times 2$ block is \emph{even} (resp., \emph{odd}) if the top right unit square in this block is black (resp., white). An \emph{odd-deficient} (resp., \emph{even-deficient}) tiling of $T$ is obtained from $T$ by removing all odd (resp., even) blocks of $T$. 

Domino shuffling is a map $S$ which sends an odd-deficient tiling $T_o$ of order $n$ to an even-deficient tiling $T_e = S(T_o)$ of order $n+1$ by the following actions: (1) slide a vertical domino in which the bottom square is black of $T_o$ one unit to its left, (2) slide a vertical domino in which the top square is black of $T_o$ one unit to its right, (3) slide a horizontal domino in which the left square is black of $T_o$ one unit above it, and (4) slide a horizontal domino in which the right square is black of $T_o$ one unit below it. We should emphasize that the colors are moved along with dominoes in our case (in \cite{EKLP1,EKLP2}, the colors are swapped after shuffling, so our terminologies are slightly different from theirs). Figure \ref{fig:ADTeven} gives an example of an even-deficient tiling $T_e$ which is obtained from the odd-deficient tiling $T_o$ shown in Figure \ref{fig:ADTodd} under the domino shuffling.

Suppose that a domino tiling $T$ of $\AD_n$ contains $m$ odd blocks. The process of sliding dominoes from its odd-deficient tiling $T_o$ described above opens up some spaces in the Aztec diamond region, these spaces can be filled by $m+n+1$ even blocks. In other words, $T_e$ has $m+n+1$ even blocks. One can see that the center of an odd (resp., even) block is assigned $-1$ (resp., $1$) under the map $M_e$ (resp., $M_o$) described in Section \ref{sec:ASMAD} since these centers are of degree $2$. 

Moreover, we define $M_e(T_o)$ to be the ASM by sending even points of $T_o$ via the map $M_e$ and assigning the center of odd blocks to $-1$. Similarly, define $M_o(T_e)$ to be the ASM by sending odd points of $T_e$ via the map $M_o$ and assigning the center of even blocks to $1$. There is a fact that the ASMs $M_e(T_o)$ and $M_o(T_e)$ are identical, where $T_e$ is obtained from $T_o$ under the domino shuffling. Figure \ref{fig:asm2} shows the ASM $M_e(T_o)$ and $M_o(T_e)$, where $T_o$ is the odd-deficient tiling in Figure \ref{fig:ADTodd} while $T_e$ is the even-deficient tiling in Figure \ref{fig:ADTeven}. One can see that the $-1$ corresponds to the odd block in Figure \ref{fig:ADTodd} and the five $1$'s correspond to the five even blocks in Figure \ref{fig:ADTeven}.
\begin{figure}[hbt!]
    \centering
    \hspace{7mm}
    \subfigure[]{\label{fig:ADTodd}\includegraphics[height=0.15\textwidth]{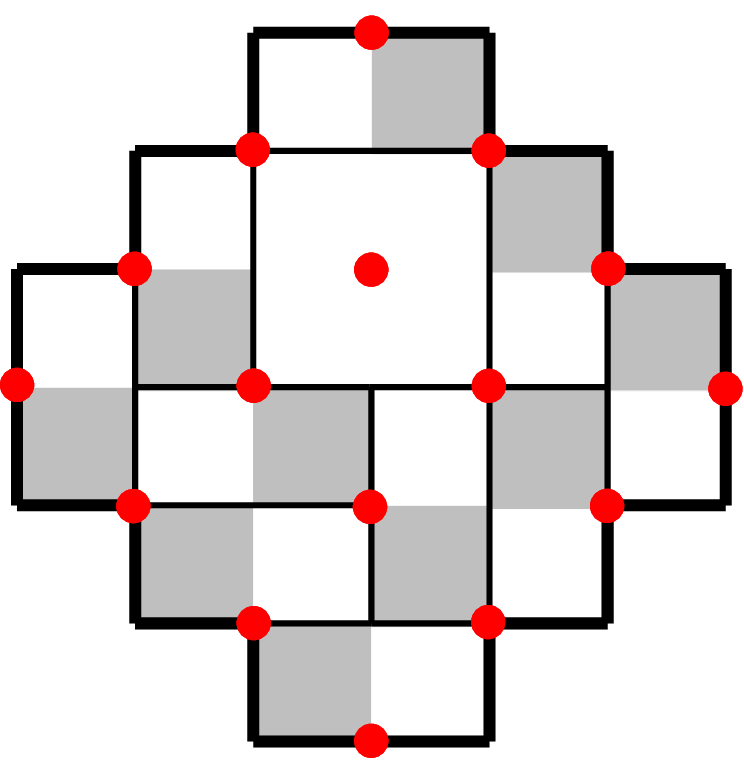}}
    \hspace{15mm}
    \subfigure[]{\label{fig:ADTeven}\includegraphics[height=0.20\textwidth]{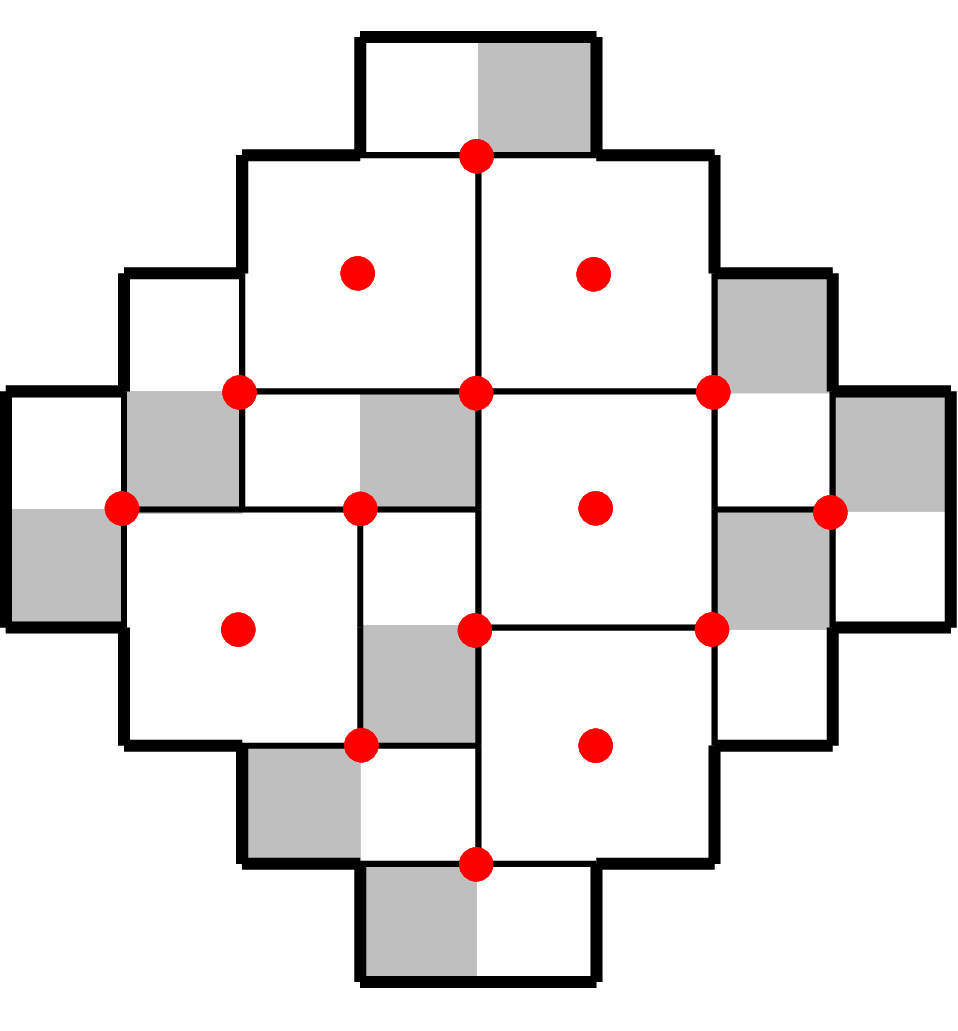}}
    \hspace{15mm}
    \subfigure[]{\label{fig:asm2}\includegraphics[height=0.175\textwidth]{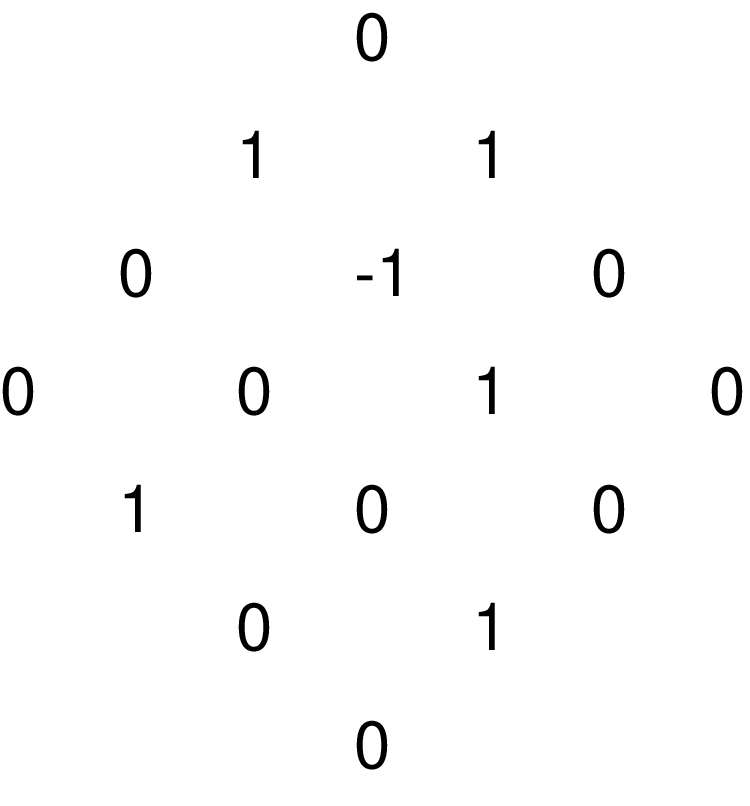}}
    \caption{(a) An odd-deficient tiling $T_o$ of the domino tiling given in Figure \ref{fig:ASMAD1}, even points are marked in red. (b) The even-deficient tiling $T_e$ under the domino shuffling of $T_o$ shown in Figure \ref{fig:ADTodd}, odd points are marked in red. (c) The ASM $M_e(T_o) = M_o(T_e)$.}\label{fig.shuffleexample}
\end{figure}
%


\section{Proof of the Nabla Operator Formula}\label{sec:Mac}

In this section, we first deduce Theorem \ref{thm:MacdonaldIntro} from the Loehr--Warrington formula (as shown in Theorem \ref{thm:LW}). The method of proof is analogous to the deduction of the $q,t$-Schr\"oder formula from the shuffle formula (see \cite[Exercise 6.15]{H08}). We then proof the $q,t$-symmetry of $P_{\lambda}(z;q,t)$ (as shown in Corollary \ref{cor:symmetry}).

For a weakly-nested $\lambda$-family of Dyck paths $\pi = (\pi_i)$, let $L(\pi)$ denote the set of all labelings of $\pi$ as in Section \ref{sec:prelimLW} and define
\begin{equation}\label{eq.Fpi}
    F_{\pi} = \sum_{w\in L(\pi)} x^w t^{\dinv(\pi,w)}.
\end{equation}
It has been shown that $F_\pi$ is a symmetric function in $\{x_i\}_{i \geq 1}$ by writing as a sum of LLT functions (\cite[Theorem 2.2]{LW08}). By Theorem \ref{thm:LW}, $F_{\pi}$ satisfies $\nabla(s_\lambda) = \sgn(\lambda)\sum_\pi q^{\area(\pi)}F_\pi$, where the sum is over all weakly-nested $\lambda$-families of Dyck paths. 

The \emph{weight} of a labeling $w$ of $\pi$ is the composition $\wt(w) = (\wt(w)_1,\wt(w)_2,\dots)$ where $\wt(w)_i$ is the number of $i$'s in the labeling $w$. We say that a labeling $w$ of $\pi$ is called \emph{standard} if the labels are the numbers $1,2,\dots,|\lambda|$ in some order with no repetitions, that is, $\wt(w) = (1,\dots,1,0,\dots)$. Let $SL(\pi)$ denote the set of standard labelings of $\pi$. The \emph{reading word} of $w$, denoted $\Read(w)$, is defined by reading the labels appearing in $w$ along diagonals from top to bottom starting with the diagonal furthest from $y=x$. For labels on coinciding edges, break ties by reading the labels from the largest indexed path to the smallest, that is, from the weakly innermost path to the weakly outermost path. For example, the reading word of the labeling in Figure \ref{fig:labeledfamily} is $\Read(w)=4422131173523$.

Given a labeling $w\in L(\pi)$ (not necessarily standard), the \emph{standardization} of $\Read(w)$ is defined as follows: Replace all the $1$'s in $\Read(w)$ with the digits $1,2,\dots,\wt(w)_1$ in the order they appear in $\Read(w)$. Replace all the $2$'s in $\Read(w)$, not including any change in the previous steps of standardization, with the digits $\wt(w)_1+1,\dots,\wt(w)_1+\wt(w)_2$ in order. Continuing in this way produces a word in which each digit $1,2,\dots,|\lambda|$ appears once each, which we call the standardized reading word of $w$, $\Read'(w)$.

For example, the standardization of $\Read(w) = 4422131173523$, corresponding to the labeling in Figure \ref{fig:labeledfamily} is  $\Read(w') = 10,11,4,5,1,7,2,3,13,8,12,6,9$. The corresponding standardized labeling $w'$ is depicted in Figure \ref{fig:standardizedfamily}.
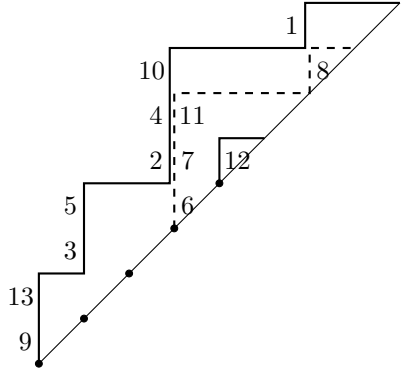
\begin{figure}[h]
    \centering
    \begin{tikzpicture}[scale=0.6]

        \draw (0,0)--(8,8);
        \draw[fill=black] (0,0) circle (.5ex);
        \draw[fill=black] (1,1) circle (.5ex);
        \draw[fill=black] (2,2) circle (.5ex);
        \draw[fill=black] (3,3) circle (.5ex);
        \draw[fill=black] (4,4) circle (.5ex);
        
        \draw[thick] (0,0)--(0,2)--(1,2)--(1,4)--(2.9,4)--(2.9,7)--(5.9,7)--(5.9,8)--(8,8);
        \draw[thick,dashed] (3,3)--(3,6)--(6,6)--(6,7)--(7,7);
        \draw[thick] (4,4)--(4,5)--(5,5);
    
        \node at (-.3,.5) {9};
        \node at (-.4,1.5) {13};
        \node at (.7,2.5) {3};
        \node at (.7,3.5) {5};
        \node at (2.6,4.5) {2};
        \node at (2.6,5.5) {4};
        \node at (2.5,6.5) {10};
        \node at (5.6,7.5) {1};
    
        \node at (3.3,3.5) {6};
        \node at (3.3,4.5) {7};
        \node at (3.4,5.5) {11};
        \node at (6.3,6.5) {8};
    
        \node at (4.4,4.5) {12};
    \end{tikzpicture}
    \caption{The standardization of the labeling in Figure \ref{fig:labeledfamily}.}
    \label{fig:standardizedfamily}
\end{figure}

\begin{lemma}\label{lem:standardization}
    For any labeling $w\in L(\pi)$, there exists a standard labeling $w' \in SL(\pi)$ such that the standardization of $\Read(w)$ is $\Read(w')$. Moreover, $\dinv(\pi,w)=\dinv(\pi,w')$.
\end{lemma}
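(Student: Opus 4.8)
The plan is to construct $w'$ explicitly from $w$ by following the standardization procedure on the reading word, then transfer this back to a labeling of the paths $\pi$, and finally verify that the dinv count is preserved pair-by-pair. First I would define $w'$ to be the labeling whose reading word is $\Read'(w)$, the standardized reading word: since $\Read(w)$ lists every north step of $\pi$ exactly once (each north step contributes one label, read along diagonals from top to bottom, breaking ties from innermost to outermost path), replacing the label of each north step by its corresponding digit in $\Read'(w)$ yields a well-defined assignment $w'$ of the digits $1,\dots,|\lambda|$ to the north steps. By construction $\Read(w') = \Read'(w)$ and $\wt(w') = (1,\dots,1,0,\dots)$, so once we check $w'$ is an admissible labeling we have $w' \in SL(\pi)$.

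Next I would verify that $w'$ satisfies the two conditions defining a labeling in Section~\ref{sec:prelimLW}. The key point is that standardization is \emph{order-preserving within each label class} and \emph{respects the original order between classes}: if a north step had label $a$ in $w$ and another had label $b$ in $w$ with $a < b$, then their standardized labels satisfy $a' < b'$; and if two north steps share the same label $c$ in $w$, then the one appearing earlier in $\Read(w)$ gets the smaller standardized label. For condition~(1) (strict increase up a run of consecutive north steps on a single path $\pi_j$): within such a run the labels of $w$ are already strictly increasing, hence distinct, so standardization keeps them strictly increasing. For condition~(2) (a north step labeled $r$ directly below one labeled $r'$ on paths $\pi_i$, $\pi_j$ with $j < i$ forces $r' \le r$): we start from $r' \le r$ in $w$; if $r' < r$ then $r' < r$ is preserved; if $r' = r$, then in the reading word the label on the innermost path ($\pi_i$, larger index, read first among coinciding labels on the same diagonal — and here the lower step on $\pi_i$ and the upper step on $\pi_j$ lie on the same diagonal) is read before the one on $\pi_j$, so the step on $\pi_i$ (the one labeled $r$) receives the smaller standardized label, giving $r' < r$. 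In all cases the inequality $r' \le r$ persists, so $w' \in SL(\pi)$. I would draw the relevant diagonal picture to make the tie-breaking argument transparent.

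Finally, for the equality $\dinv(\pi,w) = \dinv(\pi,w')$, note that $\adj(\lambda)$ is a constant depending only on $\lambda$, so it suffices to show the set of dinv pairs is unchanged. The dinv-pair conditions depend on the labels only through the inequality $L_1 < L_2$ between the two labels, together with the purely geometric conditions (same diagonal with $N_2$ weakly left of $N_1$, or the top of $N_2$ and bottom of $N_1$ on a common diagonal with $N_2$ strictly left of $N_1$). The geometry is untouched by relabeling, so I only need: for any two north steps $N_1, N_2$ with labels $a, b$ in $w$ and $a', b'$ in $w'$, we have $a < b \iff a' < b'$ \emph{and} $a = b$ cannot create a dinv pair that survives as an ambiguous case. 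When $a \ne b$ the order is preserved by standardization as noted above, so the dinv conditions agree. When $a = b$, the two steps cannot already have distinct labels (so they form no dinv pair in $w$), and I must check they form no dinv pair in $w'$ either — but if $a = b$ the two north steps, being equally labeled, must by condition~(1) lie in distinct runs on distinct paths and (by the labeling rules and the geometry of weakly-nested families) they cannot simultaneously satisfy a dinv-pair geometric condition; I would verify this small case analysis using the picture of Figure~\ref{fig:dinvpairs}.

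The main obstacle I anticipate is the $a = b$ tie-breaking analysis in both the admissibility check (condition~(2)) and the dinv-preservation step: one must argue carefully, using the precise convention that coinciding/same-diagonal labels are read from innermost to outermost path, that standardization always breaks ties in the direction consistent with the labeling axioms and with the dinv-pair geometry. Everything else is bookkeeping.
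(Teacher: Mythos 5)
Your construction of $w'$ (transfer the standardized reading word back onto the north steps in reading order) and your overall strategy coincide with the paper's proof, and your treatment of steps with distinct labels — order preserved by standardization, geometry unchanged, hence dinv pairs with distinct labels persist — is fine. But both of your tied-label arguments contain genuine errors. For the labeling axiom (2): the two steps involved are \emph{not} on the same diagonal, and the coinciding-edge tie-break is irrelevant there. If the step labeled $r$ runs from $(x,y)$ to $(x,y+1)$ and the step labeled $r'$ sits directly above it, then the $r'$-step lies on the diagonal $y-x+1$, one further from $y=x$, so by the diagonal-first reading order it is read \emph{strictly before} the $r$-step regardless of path indices. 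When $r'=r$, standardization therefore gives the $r'$-step the smaller new label, which is exactly what preserves $r'\le r$. Your stated premise — that the $r$-step on $\pi_i$ is read first and so receives the smaller new label — would produce the opposite inequality and violate the axiom; the conclusion ``$r'<r$'' you write down does not follow from your own reasoning.

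The more serious gap is in the equal-label case of dinv preservation. Your claim that two equally labeled north steps cannot occupy a dinv-pair geometric configuration is false: equal labels routinely lie on the same diagonal (in Figure~\ref{fig:labeledfamily} the path $\pi_0$ alone carries two north steps labeled $1$ on the diagonal $y=x+1$), and after standardization these acquire distinct labels while sitting in a Case~1 configuration. So this case cannot be dismissed geometrically, and the small case analysis you defer to would not close. The missing idea is to compare label order with reading order: among steps sharing a label in $w$, standardization gives the smaller new label to the step read earlier, whereas in every dinv-pair configuration of Figure~\ref{fig:dinvpairs} the step carrying the \emph{larger} label is the one read earlier (it is either higher on the same diagonal, on the more inner path of a shared edge, or on a strictly further diagonal). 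Hence a pair of formerly equal labels can never satisfy the label condition for a dinv pair after standardization. With that observation substituted for your geometric claim (and the reading order corrected in the axiom~(2) check), the proof goes through as in the paper.
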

\begin{proof}
    Let $w'$ be the assignment of labels to the north steps of $\pi$ from standardization $\Read'(w)$ of $\Read(w)$ using the same reading order. One easily checks that $w'$ satisfies the conditions to be a labeling of $\pi$ from Section \ref{sec:prelimLW}. Indeed, standardization preserves strict inequalities of labels, and in the notation of the second case when $r'=r$, $r'$ comes before $r$ in $\Read(w)$ so standardized pair of labels satisfies the required inequality in this case as well.

    To verify that $\dinv(\pi,w')=\dinv(\pi,w)$, we observe that any dinv pair for the labeling $w$ is still a dinv pair for $w'$, since the strict inequalities and relative positions between labels are maintained. It remains to check that no new dinv pairs are created from pairs of steps with equal labels in $w$ but distinct labels in $w'$. 
    
    Consider two north steps $N_1$ and $N_2$ with the same label $L_1 = L_2$ in $(\pi,w)$ but with distinct labels $L'_1<L'_2$ in $(\pi,w')$, respectively. By the construction of the standardization, $L'_2$ must be after $L'_1$ in $\Read(w')$. But dinv pairs always have the larger label on the first step in the reading order (see Figure \ref{fig:dinvpairs}), thus no new dinv pairs are created. 
\end{proof}

We recall the following definitions from \cite[Pages 91 and 99]{H08}. For a partition $\mu$ of $n$, a permutation $\sigma\in S_n$ is called a \emph{$\mu$-shuffle} if all of the sequences $[1,2,\dots,\mu_1],[\mu_1+1,\mu_1+2,\dots,\mu_1+\mu_2],\dots$ appear as ordered (but not necessarily consecutive) subsequences of $\sigma(1),\sigma(2),\dots,\sigma(n)$. For a pair of partitions $\mu,\eta$ with $|\mu|+|\eta|=n$, a permutation $\sigma\in S_n$ is called a \emph{$\mu,\eta$-shuffle} if all of the decreasing sequences $[\eta_1,\dots,1], [\eta_1+\eta_2,\dots,\eta_1+1],\dots$ and increasing sequences $[|\eta|+1,\dots,|\eta|+\mu_1],[|\eta|+\mu_1+1,\dots,|\eta|+\mu_1+\mu_2],\dots$ appear as ordered (but not necessarily consecutive) subsequences of $\sigma(1),\sigma(2),\dots,\sigma(n)$. 

\begin{lemma}\label{lem:Fpi}
    For any weakly-nested $\lambda$-family Dyck paths $\pi$ and partition $\mu$ with $|\mu|=|\lambda|$, we have
    \begin{equation}\label{eq.lemma3.2}
        \langle F_{\pi},h_\mu \rangle = \sum_{\substack{w\in SL(\pi)\\\Read(w) \text{ a }\mu\text{-shuffle}}}t^{\dinv(\pi,w)}.
    \end{equation}
\end{lemma}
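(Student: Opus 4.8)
The plan is to establish \eqref{eq.lemma3.2} by expanding $F_\pi$ in terms of the symmetric function bases and using the standardization machinery of Lemma~\ref{lem:standardization} to pass from arbitrary labelings to standard ones. First I would recall that $F_\pi = \sum_{w \in L(\pi)} x^w t^{\dinv(\pi,w)}$ is a symmetric function, so pairing it against $h_\mu$ via the Hall inner product extracts the coefficient of the monomial $m_\mu$ (since $\langle m_\mu, h_\nu\rangle = \delta_{\mu\nu}$), equivalently the coefficient of the monomial $x_1^{\mu_1}x_2^{\mu_2}\cdots$ in $F_\pi$. Thus $\langle F_\pi, h_\mu\rangle = \sum_{w} t^{\dinv(\pi,w)}$ where the sum is over labelings $w \in L(\pi)$ with weight exactly $\wt(w) = \mu$ (as a composition; since $F_\pi$ is symmetric, fixing $\mu$ to be the partition-shaped weight is legitimate).

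Next I would set up the bijection between such labelings of content $\mu$ and standard labelings whose reading word is a $\mu$-shuffle. Given $w \in L(\pi)$ with $\wt(w) = \mu$, Lemma~\ref{lem:standardization} produces a standard labeling $w' \in SL(\pi)$ with $\Read(w') = \Read'(w)$, the standardized reading word, and with $\dinv(\pi,w') = \dinv(\pi,w)$. By the very construction of standardization — the $1$'s of $\Read(w)$ get replaced by $1,2,\dots,\mu_1$ in the order they appear, the $2$'s by $\mu_1+1,\dots,\mu_1+\mu_2$ in order, and so on — the blocks $[1,\dots,\mu_1], [\mu_1+1,\dots,\mu_1+\mu_2],\dots$ appear as increasing (ordered but not necessarily consecutive) subsequences of $\Read(w')$; that is, $\Read(w')$ is exactly a $\mu$-shuffle. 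Conversely, given a standard labeling $w' \in SL(\pi)$ whose reading word is a $\mu$-shuffle, one recovers $w$ by the "destandardization" that sends the labels $1,\dots,\mu_1$ to $1$, the labels $\mu_1+1,\dots,\mu_1+\mu_2$ to $2$, etc.; the $\mu$-shuffle condition guarantees that within each consecutive north-step run of a path the resulting labels are weakly increasing but in fact the shuffle/ordering conditions force them to be arranged so that $w$ is a genuine labeling of $\pi$, and this map is inverse to standardization. Matching the $t$-weights through $\dinv(\pi,w) = \dinv(\pi,w')$ then yields \eqref{eq.lemma3.2}.

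The main obstacle I expect is verifying the two directions of this bijection carefully, in particular checking that destandardization of a $\mu$-shuffle reading word always yields a valid labeling in $L(\pi)$ — one must confirm that the strict-increase condition along consecutive north steps of a single path and the inequality condition for stacked north steps on nested paths both survive the collapse of distinct standard labels into equal ones. The delicate point is that two standard labels lying in the \emph{same} block of $\mu$ could, a priori, sit on consecutive north steps of one path (which would force a strict inequality that destandardization destroys); I would argue this cannot happen precisely because such a configuration, being a dinv pair or forced-order pair, would place the two labels in an order in the reading word incompatible with the increasing-subsequence requirement of the $\mu$-shuffle condition. This is the same reading-order argument used in the proof of Lemma~\ref{lem:standardization}, so the work is to show it runs in reverse. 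Once the bijection is in hand, the identity is immediate, and one could alternatively phrase the whole argument by expanding $F_\pi$ in the fundamental quasisymmetric basis indexed by descent sets of standardized reading words — in which $\langle F_\pi, h_\mu\rangle$ counts exactly the standard objects whose quasisymmetric descent composition refines appropriately — but the direct bijective route above is cleaner and self-contained.
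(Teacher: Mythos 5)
Your proposal is correct and follows essentially the same route as the paper: extract $\langle F_\pi,h_\mu\rangle$ as the coefficient of $m_\mu$ (i.e., the sum over labelings of weight $\mu$) and then invoke standardization as a dinv-preserving bijection onto standard labelings whose reading word is a $\mu$-shuffle. The paper's proof is terser—it simply cites Lemma~\ref{lem:standardization} for the bijection—whereas you additionally spell out the destandardization inverse and the reading-order argument showing two labels from the same block cannot sit on consecutive north steps; this detail is correct and consistent with what the paper leaves implicit.
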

\begin{proof}
As mentioned at the beginning of Section \ref{sec:Mac}, $F_{\pi}$ is symmetric in $\{x_i\}_{i \geq 1}$. Recall that $\{m_\mu\}$ and $\{h_\mu\}$ are dual bases of symmetric functions, it turns out that $\langle F_\pi,h_\mu\rangle$ is the coefficient on $m_\mu$ in the monomial expansion of $F_\pi$. Each monomial can be extracted from $F_\pi = \sum_{w\in L(\pi)} x^w t^{\dinv(\pi,w)}$ by fixing the weight of the labeling, giving the expression
\begin{equation}\label{eq.pflemma3.2}
    \langle F_\pi,h_\mu\rangle = \sum_{\substack{w\in L(\pi)\\ \wt(w) = \mu}} t^{\dinv(\pi,w)}.
\end{equation}
Then \eqref{eq.lemma3.2} follows from Lemma \ref{lem:standardization} that standardization provides a dinv-preserving bijection between labelings of $\pi$ of weight $\mu$ and standard labelings of $\pi$ whose reading word is a $\mu$-shuffle.
\end{proof}

Now, we are ready to prove Theorem \ref{thm:MacdonaldIntro}.
\begin{proof}[Proof of Theorem \ref{thm:MacdonaldIntro}]
    By ``superization'' \cite[Theorem 6.10]{H08} and Lemma \ref{lem:Fpi}, for any two partitions $\mu,\eta$ with $|\mu|+|\eta|=|\lambda|$, we have
    \begin{equation}\label{eq.pfthm1.1-1}
        \langle F_{\pi},h_\mu e_\eta  \rangle = \sum_{\substack{w\in SL(\pi)\\\Read(w) \text{ a }\mu,\eta \text{-shuffle}}}t^{\dinv(\pi,w)}.
    \end{equation}
    
    Taking $\mu= (d)$ and $\eta = (n-d)$, and applying linearity of the Hall inner product to the Loehr--Warrington formula $\nabla(s_\lambda) = \sgn(\lambda)\sum_\pi q^{\area(\pi)}F_\pi$ (Theorem \ref{thm:LW}), we therefore obtain 
    \begin{equation}\label{eq.pfthm1.1-2}
        \langle\nabla(s_\lambda), h_de_{n-d}\rangle = \sgn(\lambda)\sum_\pi q^{\area(\pi)}\langle F_\pi,h_d e_{n-d}\rangle = \sgn(\lambda)\hspace{-3ex}\sum_{\substack{(\pi,w)\in \SLNDP_\lambda\\\Read(w) \text{ a }(d),(n-d)\text{-shuffle}}}\hspace{-3ex}q^{\area(\pi)}t^{\dinv(\pi,w)}.
    \end{equation}

    The final sum of \eqref{eq.pfthm1.1-2} is over weakly-nested $\lambda$-families of Dyck paths $\pi$ with standard labeling $w$ such that $\Read(w)$ is a shuffle of the decreasing sequence $n-d,\dots,1$ and the increasing sequence $n-d+1,\dots,n$. We complete the proof by showing that such a pair $(\pi,w)$ can be interpreted as a strictly-nested $\lambda$-family of Schr\"oder paths with $d$ diagonal steps and that the area and dinv statistics of $(\pi,w)$ coincide with the statistics on a $\lambda$-family of Schr\"oder paths defined in Section \ref{sec:statistics}.

    First, we observe that for $(\pi,w)$ as above, no two paths in $\pi$ can share a north step. Indeed, suppose that paths $\pi_i$ and $\pi_j$ with $i<j$ share a north step. Since the paths cannot share east steps and cannot intersect at their starting points, the shared north step on $\pi_i$ is followed by another north step, and the shared north step on $\pi_j$ is preceded by a north step. Call the labels on these four paths $a,b,c$, and $d$ in reading order (as shown in Figure \ref{fig.shareNsteps}). By the conditions on labelings in Section \ref{sec:prelimLW} and the fact that all the labels are distinct since $w$ is standard, we have inequalities $b>a>c$ and $b>d>c$. Subsequences $a,b,c,d$ in $\Read(w)$ with these relative order conditions cannot appear in a $(d),(n-d)$-shuffle, since $b>a>c$ implies that $b$ is a part of the increasing sequence $n-d+1,\dots,n$ and $c$ is a part of the decreasing sequence $n-d,\dots,1$, but then there is no way to have $b>d>c$. 
    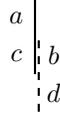
\begin{figure}[h]
        \centering
        \begin{tikzpicture}[scale=0.5]
    
            \draw[thick] (0,0)--(0,2);
            \draw[thick,dashed] (.1,-1)--(.1,1);
        
            \node at (-.5,.5) {$c$};
            \node at (-.5,1.5) {$a$};
            \node at (.5,.5) {$b$};
            \node at (.5,-.5) {$d$};
        \end{tikzpicture}
        \caption{An illustration of shared north steps and their labelings in the proof of Theorem \ref{thm:MacdonaldIntro}.}
        \label{fig.shareNsteps}
    \end{figure}

    Next, we observe that since the labels decrease down each column on any individual path, the sequence $n-d+1,\dots,n$ appearing in increasing order in $\Read(w)$ forces any north step with one of these labels to be immediately followed by an east step. We may therefore interpret $(\pi,w)$ as a family of Schr\"oder paths by replacing each north step with one of the labels $n-d+1,\dots,n$ and its following east step with a single diagonal step. 

    The only possible intersections between paths $\pi_i$ and $\pi_j$ with $i<j$ is a north step of $\pi_i$ that starts at the end of a north step of $\pi_j$. Call the labels on these paths $a$ and $b$ respectively, and note that $b>a$. Since $b$ is after $a$ in $\Read(w)$, $b$ must be a label from the increasing sequence $n-d+1,\dots,n$. Therefore, when we interpret the step labeled $b$ and its following east step as a diagonal step, it no longer intersects the path $\pi_i$. This shows that the resulting family of Schr\"oder paths is strictly nested. One easily checks that all strictly-nested families of Schr\"oder paths are obtained in this way.

    Finally, it is easy to see that the way we obtained a strictly-nested family of Schr\"oder paths mentioned above does not change the area statistic. We also observe that the dinv pairs of vertical and diagonal steps for $\lambda$-families of Schr\"oder paths defined in Section \ref{sec:statistics} coincide with the dinv pairs of labeled weakly-nested Dyck paths whose reading word is a $(d),(n-d)$-shuffle. This completes the proof of Theorem \ref{thm:MacdonaldIntro}.
\end{proof}

The key point of proving Corollary \ref{cor:symmetry} is to show that $\nabla(s_{\lambda})$ is symmetric in $q$ and $t$ for any partition $\lambda$. To this end, we state and prove below a more general result \cite{Loehr}, which is standard in the study of $q,t$-Catalan combinatorics and Macdonald polynomials.
\begin{theorem}[\cite{Loehr}]\label{thm:qtsym}
    Suppose $G$ is a symmetric function in $\Lambda$ and symmetric in $q$ and $t$. Then $\nabla(G)$ is also symmetric in $q$ and $t$.
\end{theorem}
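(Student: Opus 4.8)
The plan is to reduce the statement to a symmetry property of the modified Macdonald polynomials themselves. Recall the well-known symmetry $\tilde H_\mu(X;q,t) = \tilde H_{\mu'}(X;t,q)$, which is one of the defining properties of the $\tilde H_\mu$ basis (see \cite[Chapter 2]{H08}). First I would express $G$ in the modified Macdonald basis, writing $G = \sum_\mu c_\mu(q,t)\,\tilde H_\mu$ for some coefficients $c_\mu \in \Q(q,t)$. Applying $\nabla$ and using the definition \eqref{eq.nabladef} gives $\nabla(G) = \sum_\mu c_\mu(q,t)\, q^{n(\mu')} t^{n(\mu)}\,\tilde H_\mu$. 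The goal is then to show this is invariant under swapping $q \leftrightarrow t$.

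The key step is to understand what the hypothesis ``$G$ is symmetric in $q$ and $t$'' says about the coefficients $c_\mu$. Swapping $q$ and $t$ in $G = \sum_\mu c_\mu(q,t)\tilde H_\mu(X;q,t)$ and using $\tilde H_\mu(X;q,t) = \tilde H_{\mu'}(X;t,q)$, we get $G = \sum_\mu c_\mu(t,q)\,\tilde H_{\mu'}(X;q,t)$; reindexing $\mu \mapsto \mu'$ this reads $G = \sum_\mu c_{\mu'}(t,q)\,\tilde H_\mu(X;q,t)$. Comparing with the original expansion and using that $\{\tilde H_\mu\}$ is a basis, we conclude $c_\mu(q,t) = c_{\mu'}(t,q)$ for every $\mu$. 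Now apply the same computation to $\nabla(G)$: swapping $q \leftrightarrow t$ in $\nabla(G) = \sum_\mu c_\mu(q,t) q^{n(\mu')} t^{n(\mu)} \tilde H_\mu(X;q,t)$ yields $\sum_\mu c_\mu(t,q)\, t^{n(\mu')} q^{n(\mu)}\, \tilde H_{\mu'}(X;q,t)$, and reindexing $\mu \mapsto \mu'$ gives $\sum_\mu c_{\mu'}(t,q)\, t^{n(\mu)} q^{n(\mu')}\, \tilde H_\mu(X;q,t)$. Since $c_{\mu'}(t,q) = c_\mu(q,t)$ by the relation just derived, this equals $\sum_\mu c_\mu(q,t) q^{n(\mu')} t^{n(\mu)} \tilde H_\mu(X;q,t) = \nabla(G)$, as desired.

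I expect the only delicate point to be making sure all the bookkeeping with the conjugation $\mu \mapsto \mu'$ and the swap $q \leftrightarrow t$ is done consistently — in particular checking that the eigenvalue exponents $n(\mu')$ and $n(\mu)$ interchange precisely as needed when $\mu$ is replaced by $\mu'$. This is a routine but error-prone substitution; everything else is formal manipulation in the $\tilde H_\mu$ basis, relying only on the defining eigenvalue relation \eqref{eq.nabladef} and the standard identity $\tilde H_\mu(X;q,t) = \tilde H_{\mu'}(X;t,q)$. Once Theorem~\ref{thm:qtsym} is established, Corollary~\ref{cor:symmetry} follows by applying it to $G = s_\lambda$ (which is symmetric in $q,t$ trivially, being independent of them) together with Theorem~\ref{thm:MacdonaldIntro} and the definition \eqref{eq.P-polynomial} of $P_\lambda(z;q,t)$.
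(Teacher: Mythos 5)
Your proposal is correct and follows essentially the same route as the paper: expand $G$ in the modified Macdonald basis, use $\tilde{H}_\mu(\mathbf{x};q,t)=\tilde{H}_{\mu'}(\mathbf{x};t,q)$ to deduce the coefficient relation $c_\mu(q,t)=c_{\mu'}(t,q)$, and then check that the eigenvalues $q^{n(\mu')}t^{n(\mu)}$ behave correctly under the simultaneous swap and conjugation. The only cosmetic difference is that the paper derives the conjugation symmetry of $\tilde{H}_\mu$ from the identity $\tilde{K}_{\lambda,\mu}(q,t)=\tilde{K}_{\lambda,\mu'}(t,q)$ rather than citing it directly.
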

\begin{proof}
    The Schur expansion of the modified Macdonald polynomials is given by
    \begin{equation}\label{eq.sHexp}
        \tilde{H}_{\mu}(\mathbf{x};q,t) = \sum_{\lambda \vdash n} \tilde{K}_{\lambda,\mu}(q,t) s_{\lambda}(\mathbf{x}),
    \end{equation}
    where $\tilde{K}_{\lambda,\mu}(q,t)$ is the modified $q,t$-Kostka polynomial (see \cite[Chapter 2]{H08}). Using the fact that $\tilde{K}_{\lambda,\mu}(q,t) = \tilde{K}_{\lambda,\mu^{\prime}}(t,q)$ where $\mu^{\prime}$ denotes the conjugate of $\mu$ (\cite[Equation (2.30)]{H08}), one can readily see that
    \begin{equation}\label{eq.Hconj}
        \tilde{H}_{\mu}(\mathbf{x};q,t) = \tilde{H}_{\mu^{\prime}}(\mathbf{x};t,q).
    \end{equation}

    Expressing $G$ in terms of the modified Macdonald polynomials yields the following: 
    \begin{align}
        G &= \sum_{\mu \vdash n} a_{\mu}(q,t)\tilde{H}_{\mu}(\mathbf{x};q,t) \label{eq.G-exp1}\\
         & = \sum_{\mu \vdash n} a_{\mu^{\prime}}(q,t)\tilde{H}_{\mu^{\prime}}(\mathbf{x};q,t) \label{eq.G-exp2} \\
         & = \sum_{\mu \vdash n} a_{\mu^{\prime}}(t,q)\tilde{H}_{\mu^{\prime}}(\mathbf{x};t,q) \label{eq.G-exp3},
    \end{align}
    for some $a_{\mu}(q,t) \in \mathbb{Q}(q,t)$. \eqref{eq.G-exp2} is obtained from \eqref{eq.G-exp1} by taking the conjugate of each $\mu$ and \eqref{eq.G-exp3} follows from the assumption that $G$ is symmetric in $q$ and $t$. By \eqref{eq.Hconj}, we have
    \begin{equation}\label{eq.G-exp4}
        a_{\mu}(q,t) = a_{\mu^{\prime}}(t,q).
    \end{equation}

    Applying the nabla operator to \eqref{eq.G-exp1} and using \eqref{eq.nabladef}, we obtain
    \begin{align}
        \nabla(G) & = \sum_{\mu \vdash n} a_{\mu}(q,t) \nabla \left( \tilde{H}_{\mu}(\mathbf{x};q,t) \right) \nonumber\\
         & = \sum_{\mu \vdash n} a_{\mu}(q,t) q^{n(\mu^{\prime})} t^{n(\mu)} \tilde{H}_{\mu}(\mathbf{x};q,t). \label{eq.G-exp5}
    \end{align}
    Taking the conjugate of each $\mu$ in \eqref{eq.G-exp5}, by \eqref{eq.Hconj} and \eqref{eq.G-exp4}, this shows that $\nabla(G)$ is symmetric in $q$ and $t$.
\end{proof}
\begin{proof}[Proof of Corollary \ref{cor:symmetry}]
    Recall from \eqref{eq.P-polynomial}, $P_{\lambda}(z;q,t)$ is the generating polynomial of domino tilings of $R_{\lambda}$, equivalently, the weighted sum of $\lambda$-families of Schr\"oder paths. By Theorem \ref{thm:MacdonaldIntro}, we have the following expression.
    \begin{equation}
        P_{\lambda}(z;q,t) = \sgn(\lambda) \sum_{d=0}^{n} \langle \nabla(s_{\lambda}), h_d e_{n-d} \rangle z^d.
    \end{equation}
    
    By Theorem \ref{thm:qtsym}, $\nabla(s_{\lambda})$ is symmetric in $q$ and $t$ for any partition $\lambda$. This implies that $\langle \nabla(s_{\lambda}), h_d e_{n-d} \rangle$ is symmetric in $q$ and $t$, so is the polynomial $P_{\lambda}(z;q,t)$.
\end{proof}


\section{Partitions of Square Shape}\label{sec:pfAD}

For the rest of the paper, we continue with the same notations introduced in Section \ref{sec:prelim}. This section focuses on the special case when $\lambda = (n^n)$ (the Ferrers diagram is a square). In Section \ref{sec:exADASM}, we explain how the region $R_{(n^n)}$ relates to $\AD_n$ and provide an example when $n=2$. We then show how the area (Section \ref{sec:shuffle-area}) and dinv (Section \ref{sec:shuffle-dinv}) statistics of a domino tiling of $R_{(n^n)}$ change under the domino shuffling. Finally, the proof of \cref{thm:introAD} is presented in Section \ref{sec:pfADformula}.

\subsection{Extended Aztec Diamonds and Extended ASMs}\label{sec:exADASM}

Decomposing the partition $\lambda = (n^n)$ into border strips gives $(n_0,n_1,\dots,n_k) = (2k+1,2k-1,\dots,3,1)$, where $k=n-1$. Recall that in the top half of $\AD_{2n}$, the boxes in the bottom row are labeled as $0,0,1,1,2,2,\dots,n-1,n-1$ from left to right. The region $R_{(n^n)}$ is formed by removing specific boxes from the bottom row of the top half of $\AD_{2n}$: the white boxes labeled $0,1,\dots,n-1$ and the black boxes labeled $2n-1,2n-2,\dots,n$. In any domino tiling of $R_{(n^n)}$, vertical dominoes are forced to cover certain parts of the region. After placing these forced vertical dominoes, the remaining region reduces to $\AD_n$. Figure \ref{fig:square} illustrates the border strip decomposition of $\lambda=(4^4)$ and Figure \ref{fig:reduce} shows the region $R_{(4^4)}$, the subregion on the two sides of $R_{(4^4)}$ is covered by vertical dominoes. For convenience, the region $R_{(n^n)}$ is referred to as the \emph{extended $\AD_n$}.

Given a domino tiling $T$ of the extended $\AD_n$, one can obtain its corresponding \emph{extended} ASM similarly as in Section \ref{sec:ASMAD}. The idea is to obtain the ASM $M_e(T|_{\AD_n})$ of size $n+1$ by restricting $T$ to the region $\AD_n$ and then we add a triangular array of $0$'s of size $n$ on the southwestern and southeastern sides, respectively. The resulting extended ASM $M_e(T)$ is a triangular array of size $2n+1$. Similarly, the extended ASM $M_o(T)$ can be obtained from $M_o(T|_{\AD_n})$ of size $n$ by adding a triangular array of $0$'s of size $n-1$ on the southwestern and southeastern sides, respectively. Sometimes, it will be helpful to view the extended ASMs $M_e(T)$ and $M_o(T)$ in this way: we map the even (resp., odd) points within $\AD_n$ via $M_e$ (resp., $M_o$), for those even (resp., odd) points outside $\AD_n$, they are all mapped to be $0$. Figure \ref{fig:extASMAD2} provides a domino tiling $T$ of the extended $\AD_4$ and its extended ASM $M_o(T)$. Another example of a domino tiling $T$ of the extended $\AD_3$ and its extended ASM $M_e(T)$ is given in Figures \ref{fig:exTodd} and \ref{fig:extasm}.
\begin{figure}[h]
    \centering
    \subfigure[]{\label{fig:square}\includegraphics[height=0.16\textwidth]{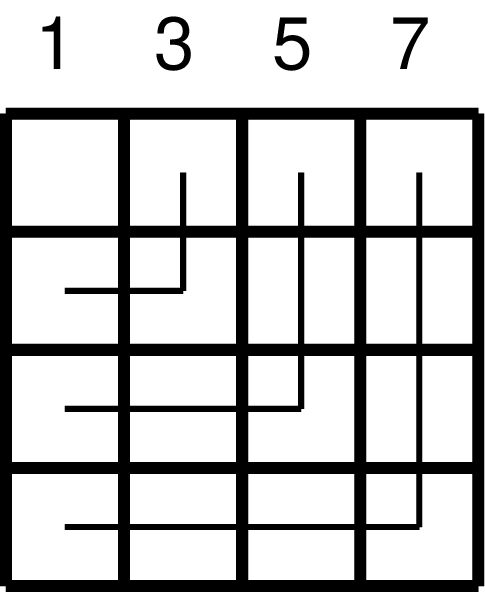}}
    \hspace{5mm}
    \subfigure[]{\label{fig:reduce}\includegraphics[height=0.21\textwidth]{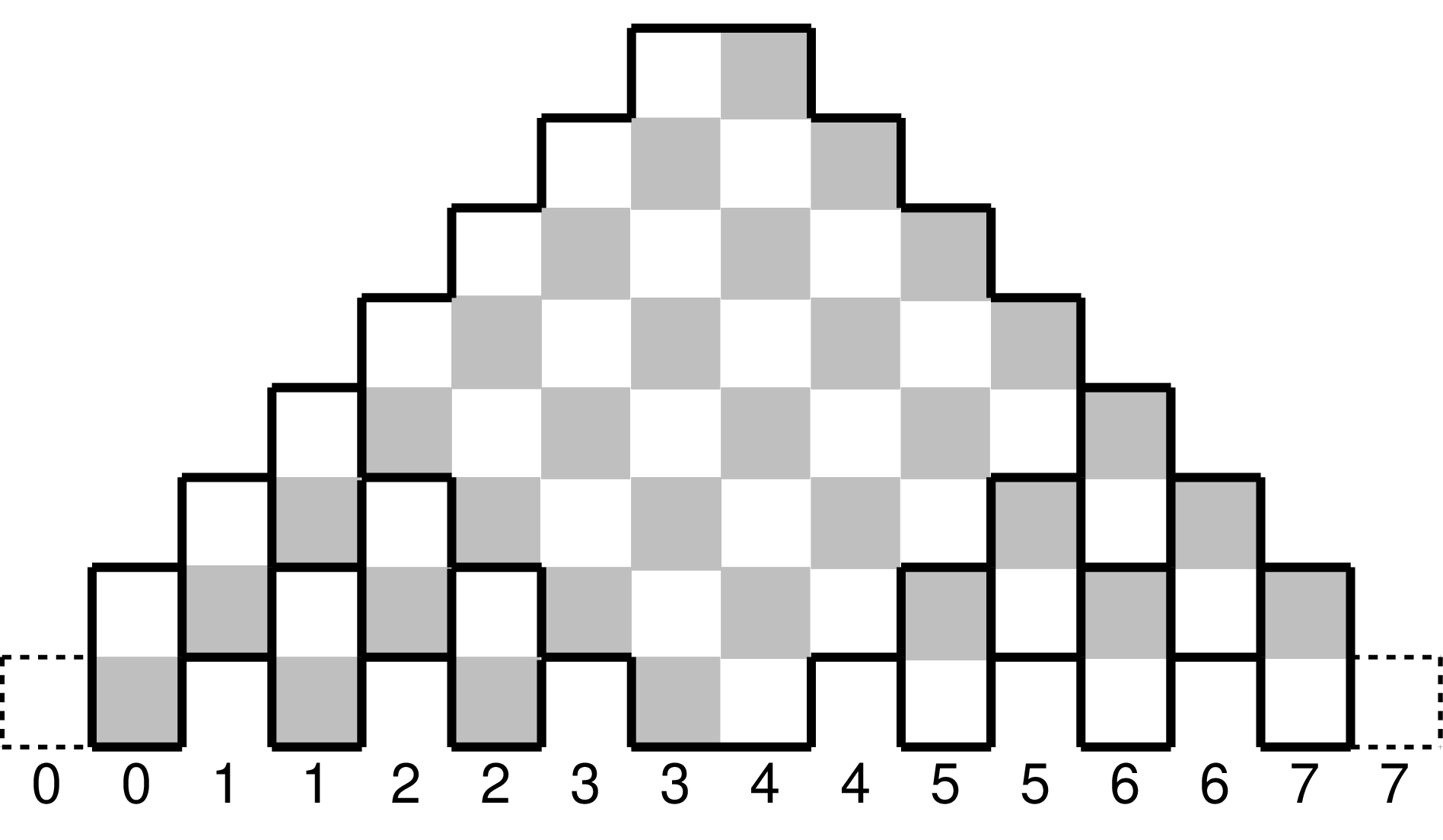}}
    \hspace{5mm}
    \subfigure[]{\label{fig:extASMAD2}\includegraphics[height=0.2\textwidth]{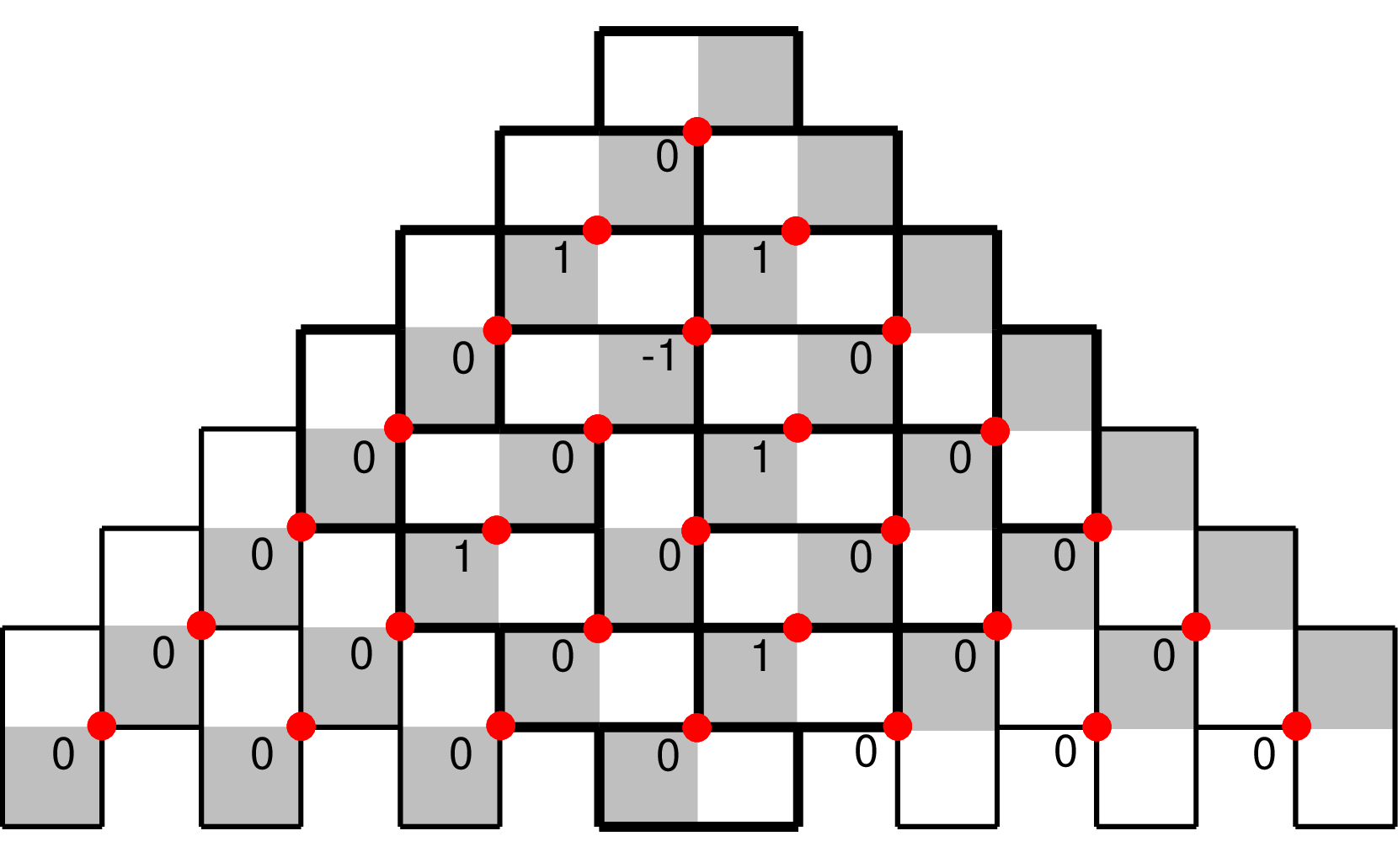}}
    \caption{(a) The decomposition of $\lambda=(4^4)$ into border strips of lengths $(n_0,n_1,n_2,n_3) = (7,5,3,1)$. (b) The region $R_{(4^4)}$. (c) A domino tiling $T$ of the extended $\AD_4$ and its extended ASM $M_o(T)$.}
\end{figure} 

We would like to point out that the dinv adjustment simplifies to $\adj(\lambda) = \binom{n}{2}$ when $\lambda = (n^n)$, so the dinv statistic reduces to
\begin{equation}\label{eq.def-domino-dinv-reduce}
    \dinv(T) = \binom{n}{2} + \left| \{ \text{domino pairs in } T \} \right|.
\end{equation}

\begin{example}
    In Table \ref{tab:AD2example}, we list $8$ weighted domino tilings of the extended $\AD_2$ where the original $\AD_2$ is enclosed by bold edges. One can check that their total weight factors into $\AD_2(z;q,t) = (qt)^2 (z+1)(z+q)(z+t)$, which agrees with \eqref{eq.ADproduct} when $n=2$.
\end{example}
\begin{table}[htb!]
    \centering
    \begin{tabular}{c|c|c|c}
            \includegraphics[width=0.16\linewidth]{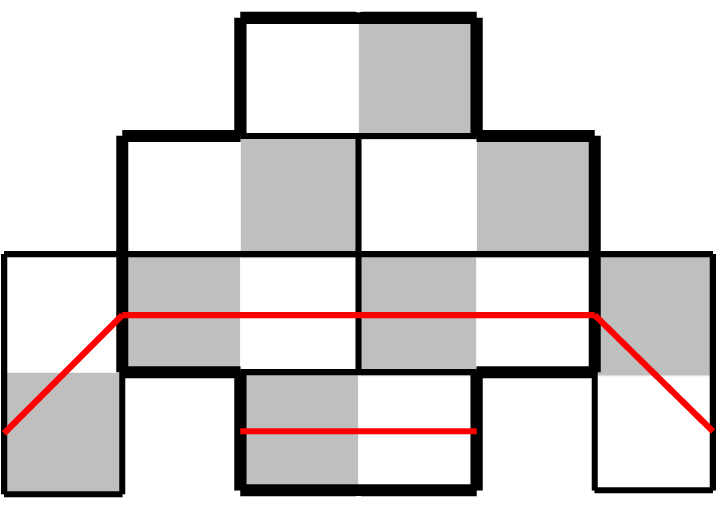}
            &  \includegraphics[width=0.16\linewidth]{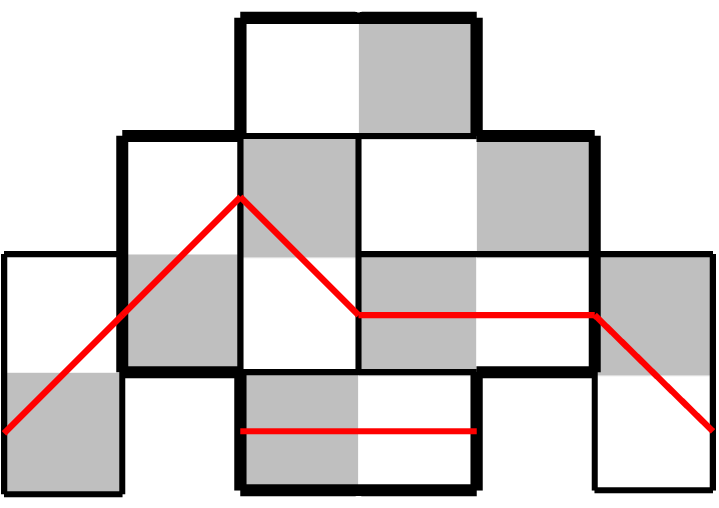} &  \includegraphics[width=0.16\linewidth]{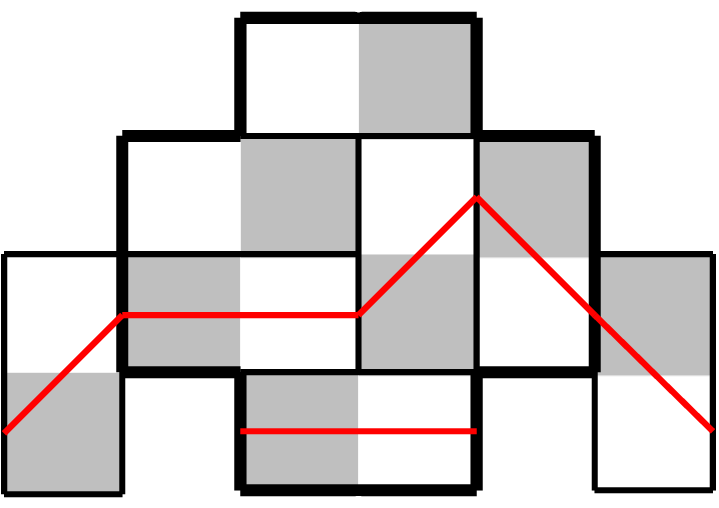} & \includegraphics[width=0.16\linewidth]{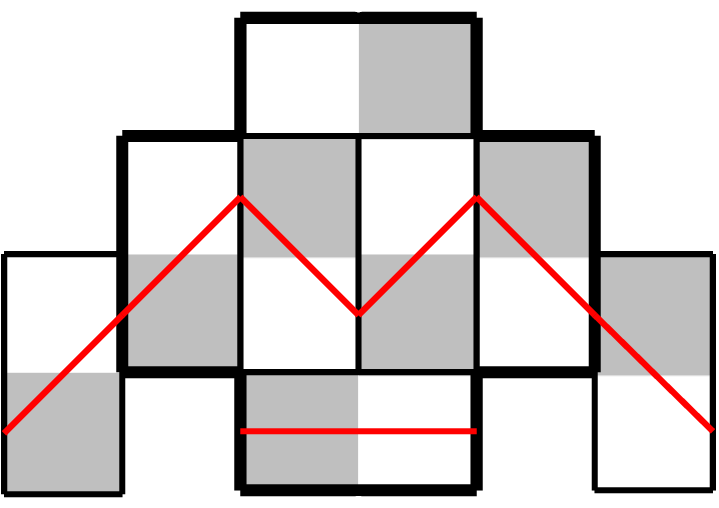} \\
        $z^3q^2t^2$ & $z^2q^2t^3$ & $z^2q^2t^2$ & $z^1q^2t^3$ \\
         \hline
        \includegraphics[width=0.16\linewidth]{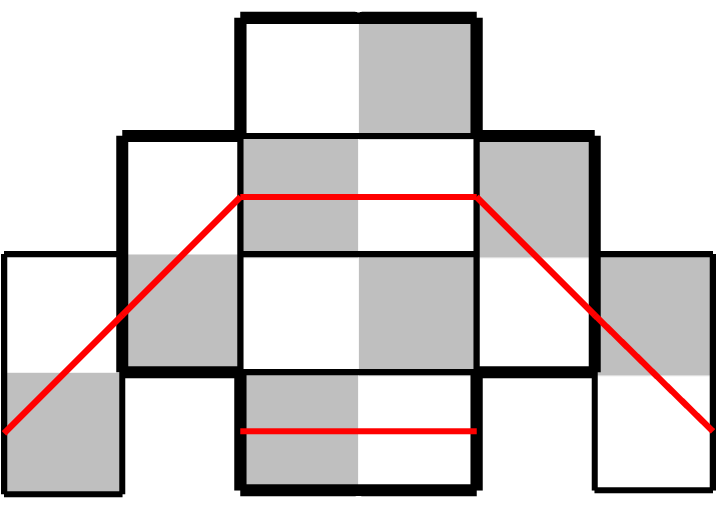} & \includegraphics[width=0.16\linewidth]{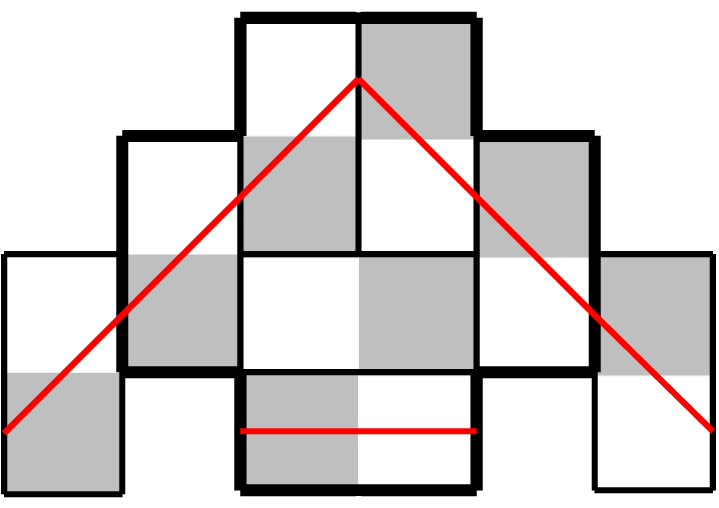} & \includegraphics[width=0.16\linewidth]{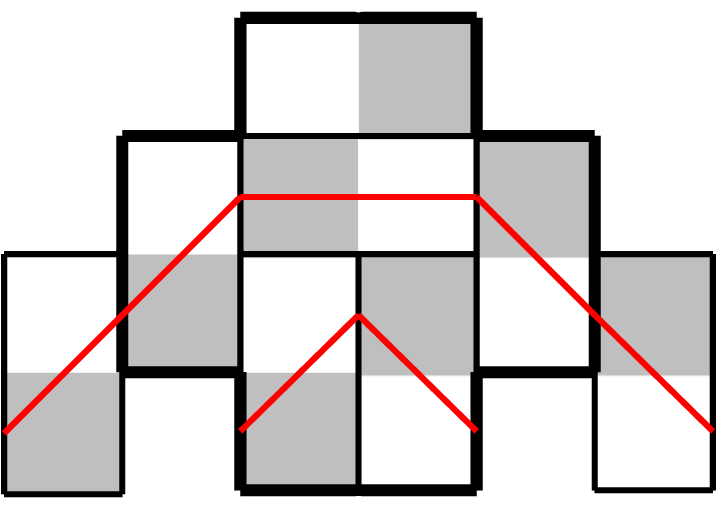} & \includegraphics[width=0.16\linewidth]{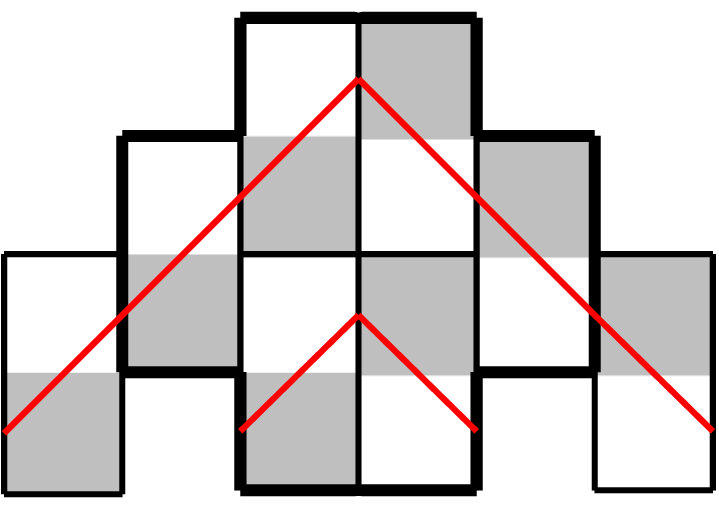} \\
        $z^2q^3t^2$ & $z^1q^3t^2$ & $z^1q^3t^3$ & $z^0q^3t^3$
    \end{tabular}
    \caption{The weight $z^{\diags(T)}q^{\area(T)}t^{\dinv(T)}$ of each domino tiling $T$ of the extended $\AD_2$.}
    \label{tab:AD2example}
\end{table}

\subsection{Domino shuffling and the area statistic}\label{sec:shuffle-area}

Given a domino tiling $T$ of the extended $\AD_n$ with $m$ odd blocks. Let $T_o$ be the odd-deficient tiling of $T$ (see Section \ref{sec:introshuffling}) and $\hat{T}_o$ be the tiling obtained from $T_o$ by completing all odd blocks with two \emph{horizontal} dominoes. We define the area and diags statistics on the odd-deficient tiling $T_o$ by
\begin{equation}\label{eq.def-area-To}
    \area(T_o) = \area(\hat{T}_o) \text{ and } \diags(T_o) = \diags(\hat{T}_o) - m.
\end{equation}

The domino shuffling introduced in Section \ref{sec:introshuffling} can be applied to an odd-deficient domino tiling $T_o$ of the extended $\AD_n$ similarly. We first attached $2n$ vertical dominoes at the bottom such that the bottom edges of these $2n$ vertical dominoes are aligned with the line $y=-1$; see the dominoes highlighted in blue in Figure \ref{fig:exTodd} for an example. Next, we apply the domino shuffling process to $T_o$ and these attached vertical dominoes, and then obtain the even-deficient tiling $T_e = S(T_o)$. Figure \ref{fig:exTeven} shows the resulting $T_e$ under the domino shuffling of $T_o$ given in Figure \ref{fig:exTodd}. Since those forced vertical dominoes on the southwestern (resp., southeastern) side of $\AD_n$ are moved one unit to their left (resp., right), creating a space of $\AD_{n+1}$ in the middle, this will not lead to conflicts under the domino shuffling process. Similarly, we set $\hat{T}_e$ to be the tiling obtained from $T_e$ by completing $m+n+1$ even blocks with two \emph{horizontal} dominoes. Define the area and diags statistics on the even-deficient tiling $T_e$ by
\begin{equation}\label{eq.def-area-Te}
    \area(T_e) = \area(\hat{T}_e) \text{ and } \diags(T_e) = \diags(\hat{T}_e) - (m+n+1).
\end{equation}

We show below how the area statistic changes under the domino shuffling.
\begin{proposition}\label{prop:area-shuffle}
    Let $T$ be a domino tiling of the extended $\AD_n$ with $m$ odd blocks, $T_o$ be the odd-deficient tiling of $T$, and $T_e = S(T_o)$ be the even-deficient tiling under the domino shuffling of $T_o$. Then
    \begin{equation}\label{eq:area-shuffle}
        \area(T_e) = \area(T_o) - \diags(T_o) + n(2n+1) - m.
    \end{equation}
\end{proposition}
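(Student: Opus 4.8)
The plan is to track how the area statistic, as defined via the completions $\hat{T}_o$ and $\hat{T}_e$, changes domino-by-domino under the shuffling map $S$. Recall that $\area(T_o) = \area(\hat{T}_o)$ is computed by summing $\area(d) = i$ over all dominoes in $V(\hat{T}_o) \cup H(\hat{T}_o)$ (the vertical dominoes with black bottom and horizontal dominoes with black left square), where $i$ is the $y$-coordinate of the bottom edge of the domino, with the same formula for $\hat{T}_e$. The key observation is that shuffling acts by sliding each of the four ``active'' domino types (those in $V \cup H$, i.e. vertical with black bottom, vertical with black top, horizontal with black left, horizontal with black right) by one unit in a prescribed direction, and it creates/destroys $2\times 2$ blocks. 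So I would first classify dominoes of $\hat{T}_o$ into those that persist through the shuffle and those that lie in odd blocks that get filled in, and likewise for $\hat{T}_e$ and its even blocks.

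First I would set up the bookkeeping carefully: the tiling $\hat{T}_o$ consists of the dominoes of $T_o$ together with $2m$ horizontal dominoes filling the $m$ odd blocks; similarly $\hat{T}_e$ has the dominoes of $T_e$ plus $2(m+n+1)$ horizontal dominoes filling the $m+n+1$ even blocks. Under the shuffle, the dominoes of $T_o$ (those not in an odd block) each move: a vertical domino with black bottom moves one unit left (area contribution unchanged, since area depends only on the $y$-coordinate), a vertical domino with black top moves one unit right (again area unchanged, and this domino is in $V^c$, so it doesn't contribute to area anyway), a horizontal domino with black left square moves one unit up (area contribution increases by $1$), and a horizontal domino with black right square moves one unit down (this is in $H^c$, contributes nothing). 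So among the non-block dominoes, the only change to total area is $+1$ for each horizontal domino in $H(T_o)$, i.e. an increase of $|H(T_o)|$. But $|H(T_o)| = \diags(T_o) + m$ after accounting for the definition $\diags(T_o) = \diags(\hat T_o) - m$ and the fact that the $2m$ filler horizontals of $\hat T_o$ contribute $m$ to $|H(\hat T_o)|$ — here I need to be careful about which filler dominoes count as ``black-left'' horizontals, which is determined by the checkerboard coloring and the parity of the block.

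The remaining contributions come from comparing the filler dominoes: the $2m$ horizontal fillers of the odd blocks of $\hat{T}_o$ versus the $2(m+n+1)$ horizontal fillers of the even blocks of $\hat{T}_e$, located at positions determined by where the shuffle opens up space. I would compute, for each odd block of $T$ centered at a point of a given parity/height, the area contributed by its two horizontal fillers (one of them lies in $H$, contributing its $y$-coordinate), and similarly for each even block of $T_e$. The new even blocks arise precisely in the $\AD_{n+1}$ region opened in the middle (plus the boundary adjustments from the attached $2n$ vertical dominoes sliding outward), and their centers are the $-1$'s and $1$'s of the associated extended ASM via the correspondence $M_e(T_o) = M_o(T_e)$. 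Summing the $y$-coordinates of these block-filler contributions and subtracting gives the terms $n(2n+1) - m$ in the formula — the $n(2n+1)$ should emerge as a clean sum over a full triangular array of positions in the newly created region, and the $-m$ as a correction matching odd blocks of $T$ to the ``same'' $-1$ entries, which under $M_e(T_o)=M_o(T_e)$ become even blocks counted with opposite sign.

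The main obstacle I expect is the boundary accounting: keeping precise track of the attached $2n$ vertical dominoes at $y=-1$, how they slide and interact with the forced vertical dominoes on the southwestern and southeastern sides of the extended $\AD_n$, and making sure no spurious area contributions are introduced or lost at the seams where the extended region meets $\AD_{n+1}$. A secondary subtlety is verifying that every horizontal filler domino's color (black-left vs. black-right) is correctly predicted by the odd/even designation of its block together with the global checkerboard coloring, since this determines whether it contributes to $\area$ at all. Once these two coloring/boundary issues are pinned down, the identity reduces to the arithmetic of summing $y$-coordinates over the relevant sets of block-centers, which is routine. I would organize the proof as: (1) area change from sliding non-block dominoes $= |H(T_o)|$; (2) express $|H(T_o)|$ in terms of $\diags(T_o)$ and $m$; (3) compute the net change from replacing odd-block fillers by even-block fillers via the ASM correspondence, obtaining $n(2n+1) - 2m$ or similar; (4) combine and simplify using $\diags(T_o) = \diags(\hat T_o)-m$ and $\diags(T_e)=\diags(\hat T_e)-(m+n+1)$ to reach \eqref{eq:area-shuffle}.
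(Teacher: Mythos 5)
Your overall strategy---splitting the area change into (i) the dominoes that survive the shuffle and (ii) the exchange of odd-block fillers for even-block fillers, with (ii) computed through a level-type statistic on the common ASM $M_e(T_o)=M_o(T_e)$---is the same decomposition the paper uses, and your step (3) matches the paper's treatment (area of a block filler equals $\level(x)-1$ for the corresponding $\pm1$ entry $x$, and the signed sum $\sum_{i,j}(\level(a_{i,j})-1)a_{i,j}$ evaluates to $n(n+1)$ using the ASM row and column sums). The problem is step (1), where you omit the change of baseline between the extended $\AD_n$ and the extended $\AD_{n+1}$: the shuffled configuration, together with the $2n$ attached vertical dominoes whose bottom edges lie on $y=-1$, sits one unit \emph{below} the standard position of the extended $\AD_{n+1}$, so after renormalizing every domino's $y$-coordinate increases by $1$. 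With that shift, a vertical domino in $V(\hat{T}_o)$ (which slides horizontally) gains exactly $+1$ of area, while a horizontal domino in $H(\hat{T}_o)$ gains $0$ (in the shuffle the black-left horizontal of an even block must move \emph{down}, as one checks from the requirement that the two horizontals of an even block separate; this is the direction used in the paper's proof). Hence the surviving-domino contribution is $|V(\hat{T}_o)| = n^2-\diags(T_o)-m$, which is precisely the source of both the $n^2$ part of $n(2n+1)$ and the $-\diags(T_o)$ term in \eqref{eq:area-shuffle}.

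Your accounting instead yields $+|H(T_o)|$, i.e.\ a contribution $+\diags(T_o)$ with the wrong sign and no $n^2$ term; and since your step (3) correctly produces a quantity independent of $\diags$, there is no later stage at which $-\diags(T_o)$ can reappear, so the totals cannot match \eqref{eq:area-shuffle}. (A smaller slip: $|H(T_o)|=\diags(T_o)$, not $\diags(T_o)+m$, because $\diags(T_o):=\diags(\hat{T}_o)-m$ already discounts the $m$ black-left fillers of the odd blocks.) The boundary bookkeeping you flag as the main anticipated obstacle is comparatively harmless---the forced and attached dominoes are all vertical and are handled by the same uniform $+1$ rule; the genuinely missing idea is the renormalization of the area baseline when passing from order $n$ to order $n+1$.
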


Recall from \eqref{eq.def-domino-area} that the area of a domino tiling $T$ can be expressed as the sum of $\area(d_v)$ and $\area(d_h)$ for all vertical dominoes $d_v \in V(T)$ and horizontal dominoes $d_h \in H(T)$. To prove Proposition \ref{prop:area-shuffle}, we compare the area of dominoes in $V(\hat{T}_o) \cup H(\hat{T}_o)$ with the area of dominoes in $V(\hat{T}_e) \cup H(\hat{T}_e)$ under the domino shuffling. A central idea is to represent the area of the horizontal domino in each odd or even block as a statistic on the extended ASM $M_e(T)$. This statistic is defined as follows: we label the rows of $M_e(T)$ from bottom to top by $1$ to $2n+1$. If $x$ is an entry in $M_e(T)$ located at row $i$, then the \emph{level} of $x$ is defined as $\level(x) = i$. 

Assuming the bottom edge of the horizontal domino $d_h \in H(\hat{T}_o)$ in an odd block is on $y=i$, this implies that $\area(d_h) = i$. Also, the center of this odd block corresponds to the entry $x=-1$ in the extended ASM $M_e(T)$, where $\level(x) = i+1$. After the domino shuffling of $T_o$, assuming the bottom edge of the horizontal domino $d^{\prime}_h \in H(\hat{T}_e)$ in an even block is on $y=j$, that is, $\area(d^{\prime}_h) = j$. The center of this even block corresponds to the entry $y=1$ in the same extended ASM, where $\level(y) = j+1$. For example,
in Figure \ref{fig:exTodd}, if we fill the only odd block with two horizontal dominoes, let $d_h$ be one of these horizontal dominoes in $H(\hat{T}_o)$, then the bottom edge of $d_h$ is on $y=4$, which implies that $\area(d_h) = 4$. At the same time, this odd block corresponds to the entry $x=-1$ shown in Figure~\ref{fig:extasm} and $\level(x) = 5$. Similarly, in Figure \ref{fig:exTeven}, if we fill the the green-colored even block with two horizontal dominoes, let $d^{\prime}_h$ be one of these horizontal dominoes in $H(\hat{T}_e)$, then the bottom edge of $d^{\prime}_h$ lies on $y=3$, giving $\area(d^{\prime}_h) = 3$. This even block corresponds to the entry $x=1$ highlighted in the green box in Figure \ref{fig:extasm} and $\level(x) = 4$. 

Therefore, the area of these particular horizontal dominoes is precisely captured by the level statistic on $M_e(T)$. We summarize the above discussion in the following lemma.
\begin{lemma}\label{lem:area-level}
    Let $T$ be a domino tiling of the extended $AD_n$. Let $d_h \in H(\hat{T}_o)$ be a horizontal domino contained in an odd block of $\hat{T}_o$ and $d^{\prime}_h \in H(\hat{T}_e)$ be a horizontal domino contained in an even block of $\hat{T}_e$. Let $x$ be an entry of $M_e(T)$ corresponding to the center of an odd or even block. Then 
    \begin{equation}\label{eq.area-level}
       \level(x)-1 = \begin{cases}
         \area(d_h) & \text{if $x=-1$,} \\
         \area(d^{\prime}_h) & \text{if $x=1$.} 
          \end{cases}
    \end{equation}
\end{lemma}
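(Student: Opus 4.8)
The plan is to verify the two claimed identities by tracking, under the domino shuffling map $S$, exactly where the horizontal dominoes filling odd and even blocks sit and how their vertical position relates to the level of the corresponding $\pm 1$ entry in $M_e(T)$. Since the statement is really just a careful bookkeeping lemma — most of the substance was already laid out in the discussion preceding it — the proof should amount to making that discussion precise for a general block rather than the worked example.

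First I would fix a domino tiling $T$ of the extended $\AD_n$ and an odd block $B$ of $T$; its odd-deficient tiling $T_o$ omits $B$, and $\hat T_o$ fills the vacated $2\times 2$ square with two stacked horizontal dominoes. If the bottom edge of that square lies on the line $y=i$, then by the definition \eqref{eq.def-domino-area} the lower of the two horizontal dominoes $d_h$ (the one in $H(\hat T_o)$, i.e.\ with its left square black) has $\area(d_h)=i$. So I need to show that the center of the odd block $B$, which is the lattice point $(x_0, y_0)$ with $y_0 = i+1$, is an even point of $\AD_n$ whose image under $M_e$ is $-1$ (this is exactly the degree-$2$ case in \eqref{eq.ASMAD}, already noted in Section~\ref{sec:introshuffling}), and that $\level$ of that entry, by the row-labeling convention (rows of $M_e(T)$ numbered $1$ to $2n+1$ from the bottom), equals $y_0 = i+1$. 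Hence $\level(x)-1 = i = \area(d_h)$, which is the first case of \eqref{eq.area-level}.

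Next I would do the analogous computation on the target side. Given an even block $B'$ of $T_e = S(T_o)$, filling it in $\hat T_e$ with two stacked horizontal dominoes, if its $2\times 2$ square has bottom edge on $y=j$ then $\area(d'_h)=j$ for the lower horizontal domino $d'_h\in H(\hat T_e)$. The key fact supplied in Section~\ref{sec:introshuffling} is that $M_e(T_o) = M_o(T_e)$ as ASMs, and that the center of an even block of $T_e$ gets value $1$ under $M_o$ (again a degree-$2$ vertex). So the center of $B'$, the point with $y$-coordinate $j+1$, corresponds to an entry $x = 1$ of the common ASM $M_e(T)$, and the row-labeling convention gives $\level(x) = j+1$, whence $\level(x)-1 = j = \area(d'_h)$. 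The two cases together give \eqref{eq.area-level}, and since every $-1$ (resp.\ $+1$) entry of $M_e(T)$ arises this way from an odd (resp.\ even) block, the statement covers all such entries $x$.

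The main obstacle — really the only point requiring care — is pinning down the coordinate/level bookkeeping consistently: one must check that under the rotation-and-rescaling conventions used to embed $\AD_n$ (and the extended region) in the plane, the row index of an entry of $M_e(T)$ in the ASM equals the $y$-coordinate of the corresponding lattice point, and that the ``center of the block'' is indeed the lattice point one unit above the bottom edge of the $2\times 2$ square, not half a unit or displaced by the forced boundary dominoes. Once the conventions are fixed (matching the worked examples in Figures~\ref{fig:exTodd}, \ref{fig:exTeven}, \ref{fig:extasm}), everything else is immediate from \eqref{eq.def-domino-area}, \eqref{eq.ASMAD}, and the identity $M_e(T_o)=M_o(T_e)$ already recorded in Section~\ref{sec:introshuffling}; no new combinatorial input is needed.
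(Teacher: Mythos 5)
Your proposal is correct and follows essentially the same route as the paper: the paper's own ``proof'' is precisely the bookkeeping discussion preceding the lemma (block center is a degree-$2$ vertex, hence a $\pm1$ entry of $M_e(T_o)=M_o(T_e)$, whose row index is one more than the $y$-coordinate of the bottom edge of the relevant horizontal domino), which you reproduce in general form. One small caution: in an odd block the top-right square is white, so the horizontal domino with black left square (the one in $H(\hat T_o)$) is the \emph{upper} of the two, not the lower as your parenthetical asserts; this off-by-one is compensated by the level convention you adopt, so the final identity $\level(x)-1=\area(d_h)$ still comes out right, but the two conventions should be fixed consistently with the figures.
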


\begin{figure}[hbt!]
    \centering
    \subfigure[]{\label{fig:exTodd}\includegraphics[height=0.155\textwidth]{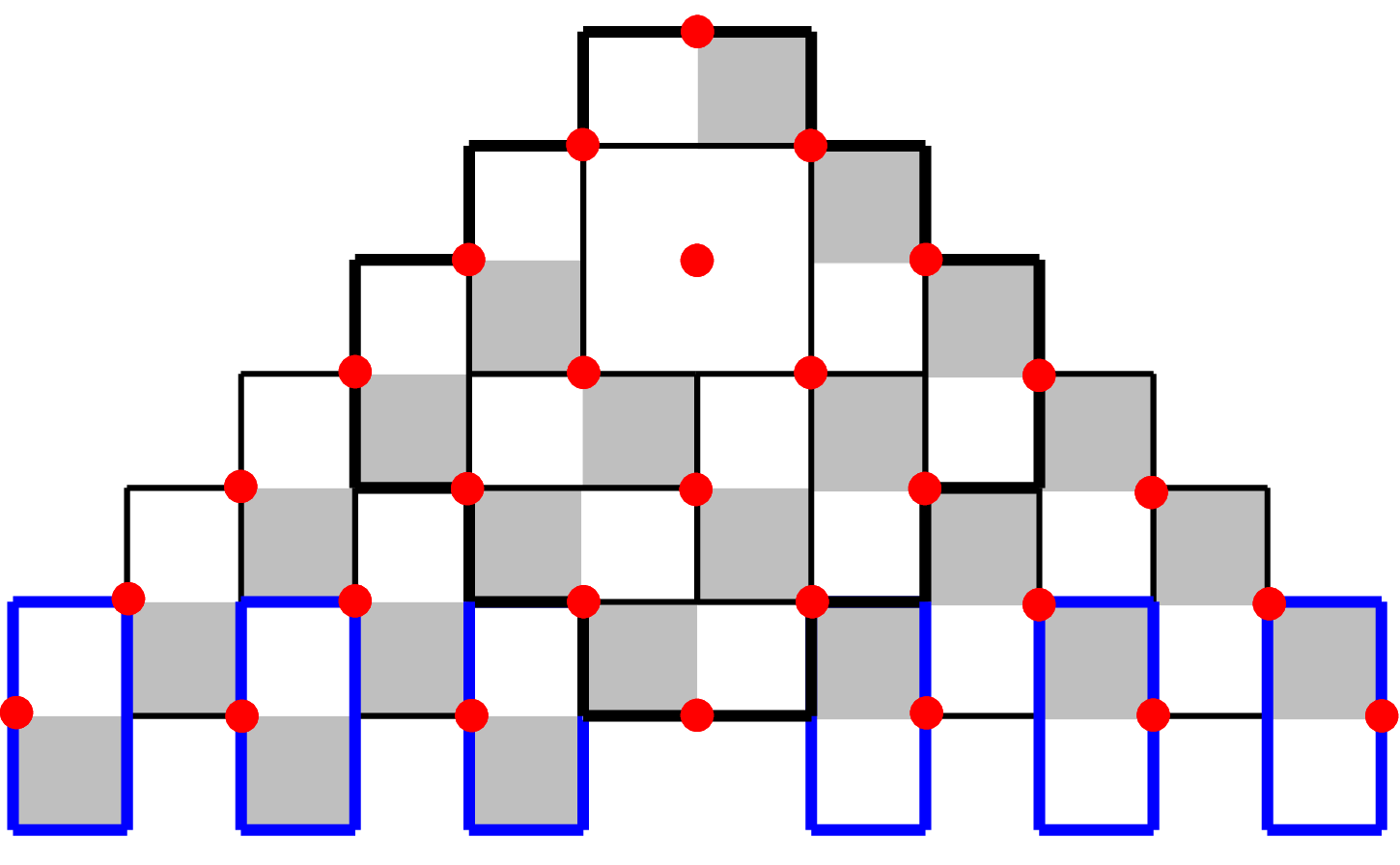}}
    \hspace{3mm}
    \subfigure[]{\label{fig:exTeven}\includegraphics[height=0.175\textwidth]{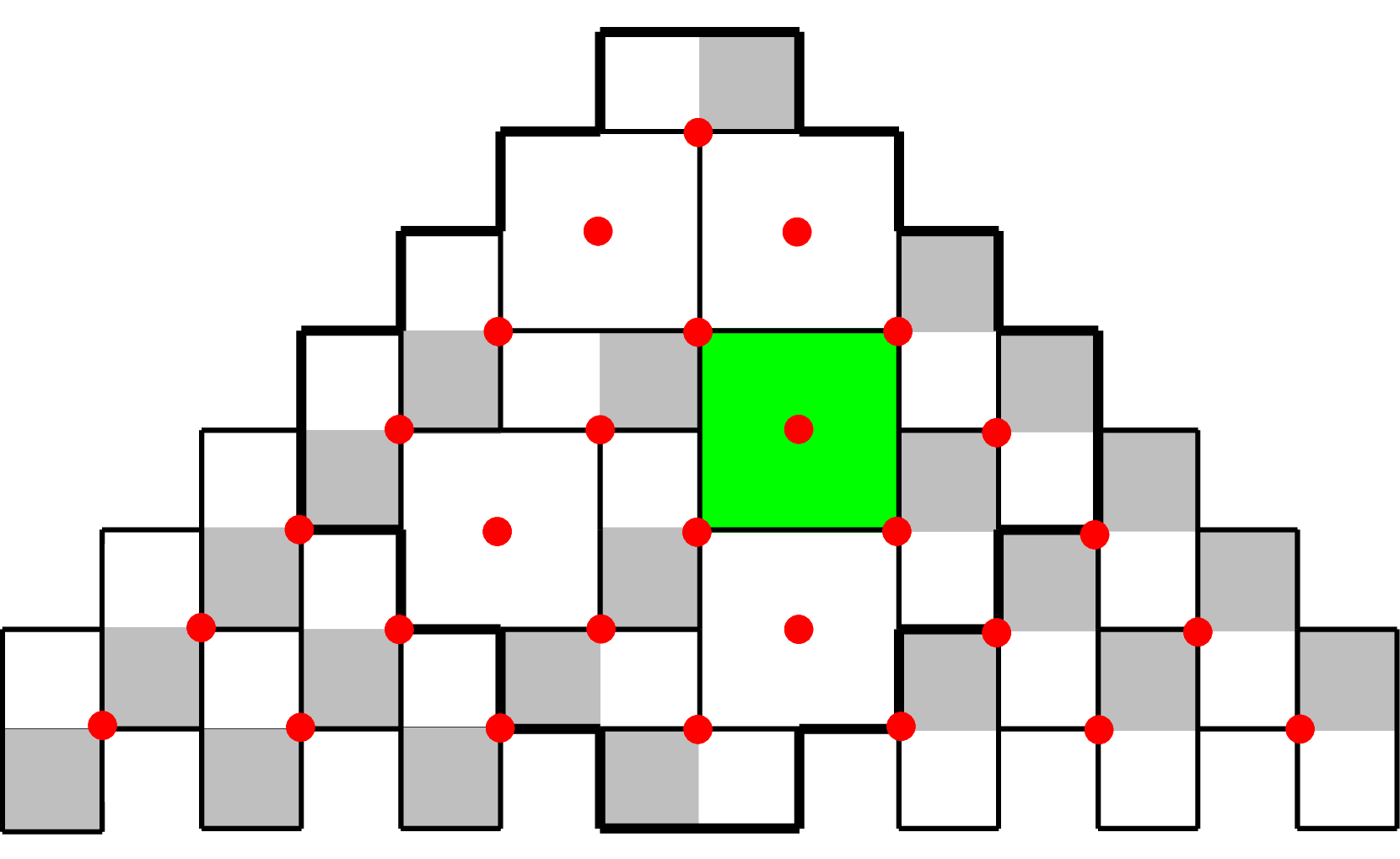}}
    \hspace{3mm}
    \subfigure[]{\label{fig:extasm}\includegraphics[height=0.16\textwidth]{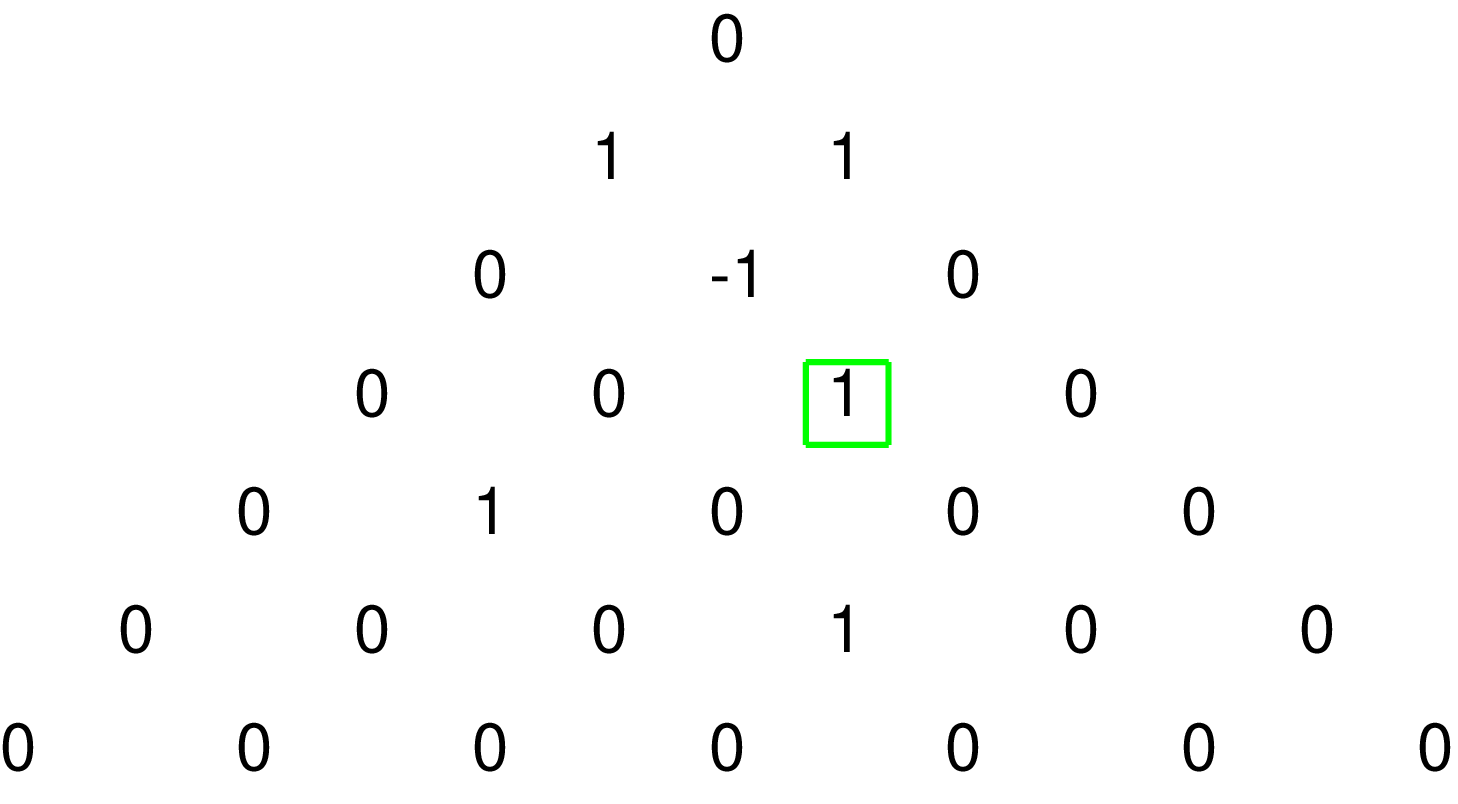}}
    \caption{(a) An odd-deficient tiling $T_o$ of the extended $\AD_3$, even points are marked in red and the attached vertical dominoes are highlighted in blue. (b) The even-deficient tiling $T_e$ under the domino shuffling of $T_o$ shown in Figure \ref{fig:exTodd}, odd points are marked in red. (c) The extended ASM $M_e(T_o) = M_o(T_e)$.}\label{fig:shuffleexample}
\end{figure}

Now, we are prepared to prove Proposition \ref{prop:area-shuffle}.

\begin{proof}[Proof of Proposition \ref{prop:area-shuffle}]
    Based on \eqref{eq.def-domino-area}, \eqref{eq.def-area-To} and \eqref{eq.def-area-Te}, we analyze the area of dominoes in the following two cases: (1) compare the area of dominoes that are \emph{not} in odd blocks of $\hat{T}_o$ with their area contributions after shuffling, and (2) compare the area contributions of odd blocks of $\hat{T}_o$ with the area contribution of even blocks of $\hat{T}_e$ after shuffling.

    \noindent Case (1). Let $d_h \in H(\hat{T}_o)$ be a horizontal domino that does not belong to any odd block of $\hat{T}_o$. We assume that the bottom edge of $d_h$ is on $y=i$, that is, $\area(d_h)=i$. During the domino shuffling process, this domino is shifted one unit downward. For simplicity, we adjust the $y$-coordinate by shifting it downward by one unit, aligning the bottom edge of the resulting even-deficient tiling $T_e$ with $y=0$. It turns out that the bottom edge of this domino is still on $y=i$ after shuffling, this implies that $\area(d_h)$ does not change after shuffling.
    
    On the other hand, let $d_v \in V(\hat{T}_o)$ be a vertical domino that does not belong to any odd block of $\hat{T}_o$. suppose the bottom edge of $d_v$ is on $y=i$. During the domino shuffling process, $d_v$ is shifted one unit to its left. Due to the coordinate adjustment described earlier, the bottom edge of this domino in $T_e$ is now located at $y=i+1$. Consequently, $d_v$ contributes one more to the area statistic on $T_e$ than to $T_o$. Therefore, in this case, the additional area contribution under the domino shuffling is equivalent to the number of vertical dominoes in $V(\hat{T}_o)$.
    
    Based on the basic counting, the number of dominoes in a domino tiling $A \in \T(\AD_n)$ is twice the sum of the number of vertical dominoes in $V(A)$ and the number of horizontal dominoes in $H(A)$. This quantity equals to $n(n+1)$. Thus,
    \begin{equation}\label{eq.ADcounting}
        |V(A)| = n(n+1)/2 - |H(A)| = n(n+1)/2 - \diags(A).
    \end{equation}
    To compute $|V(\hat{T}_o)|$, we restrict $\hat{T}_o$ to $\AD_n$. By \eqref{eq.def-area-To} and \eqref{eq.ADcounting},
    \begin{equation}\label{eq.area-up}
        |V(\hat{T}_o|_{\AD_n})| = n(n+1)/2-\diags(\hat{T}_o|_{\AD_n}) = n(n+1)/2 - \diags(T_o|_{\AD_n})-m.
    \end{equation}
    Additionally, there are $n(n-1)/2$ forced vertical dominoes in $V(T_o)$ within the extended $\AD_n$ that have not yet been accounted for. Therefore, together with \eqref{eq.area-up}, the additional area contribution under the domino shuffling in this case is given by
    \begin{equation}\label{eq.area-case1}
        |V(\hat{T}_o)| = |V(\hat{T}_o|_{\AD_n})| + n(n-1)/2 = n^2-\diags(T_o)-m.
    \end{equation}

    \noindent Case (2). Let $H_{\text{block}}(\hat{T}_o)$ (resp., $H_{\text{block}}(\hat{T}_e)$) be the subset of $H(\hat{T}_o)$ (resp., $H(\hat{T}_e)$) consisting of horizontal dominoes in odd blocks of $\hat{T}_o$ (resp., $\hat{T}_e$). By Lemma \ref{lem:area-level}, the area of a horizontal domino in $H_{\text{block}}(\hat{T}_o)$ or $H_{\text{block}}(\hat{T}_e)$ can be interpreted by the level statistic of $M_e(T)$. As a consequence, the total difference of the area contribution in this case can be written as
    \begin{equation}\label{eq.area-odd/even}
        \sum_{d^{\prime}_h \in H_{\text{block}}(\hat{T}_e)} \area(d^{\prime}_h) - \sum_{d_h \in H_{\text{block}}(\hat{T}_o)} \area(d_h) = \left( \sum_{\substack{x \in M_e(T) \\ x=1}} (\level(x)-1)\right) - \left( \sum_{\substack{x \in M_e(T) \\ x=-1}} (\level(x)-1) \right).
    \end{equation}

    It suffices to consider the level statistic on the entries of the original ASM $M_e(T|_{\AD_n})$ of size $n+1$ (since other entries are $0$). From bottom to top, we now label the arrays of $M_e(T|_{\AD_n})$ parallel to $y=x$ by $i=1$ to $i=n+1$ and the arrays parallel to $y=-x$ by $j=1$ to $j=n+1$. It is clear that the level of the entry $a_{i,j}$ of $M_e(T|_{\AD_n})$ is given by
    \begin{equation}\label{eq.levelcoordinate}
        \level(a_{i,j})=i+j-1.
    \end{equation}

    The key step to evaluate \eqref{eq.area-odd/even} is to identify it as a sum of $\level(a_{i,j})a_{i,j}$ over all the entries $a_{i,j}$ of $M_e(T|_{\AD_n})$ and then simplify it using \eqref{eq.levelcoordinate}. Then \eqref{eq.area-odd/even} can be rewritten as
    \begin{align}
    \sum_{i,j=1}^{n+1}(\level(a_{i,j})-1)a_{i,j} & = \sum_{i,j=1}^{n+1}(i+j-2)a_{i,j} \nonumber \\
     & = \sum_{i,j=1}^{n+1}i a_{i,j} + \sum_{i,j=1}^{n+1}j a_{i,j} - 2\sum_{i,j=1}^{n+1}a_{i,j} \nonumber \\
     & = \sum_{i=1}^{n+1}i \sum_{j=1}^{n+1}a_{i,j} + \sum_{j=1}^{n+1}j \sum_{i=1}^{n+1} a_{i,j} - 2\sum_{i,j=1}^{n+1}a_{i,j}. \label{eq.area-simplify}
    \end{align}        
    Using the fact that each row sum and column sum of an ASM is $1$, then \eqref{eq.area-simplify} reduces to 
    \begin{equation}\label{eq.area-even/odd-contribution}
        \frac{(n+2)(n+1)}{2} + \frac{(n+2)(n+1)}{2} - 2 (n+1) = n(n+1).
    \end{equation}

    Finally, combining the discussion from these two cases, by \eqref{eq.area-case1} and \eqref{eq.area-even/odd-contribution},
    \begin{align}
        \area(T_e) - \area(T_o) & = \area(\hat{T}_e) - \area(\hat{T}_o) \nonumber \\
        & = n^2 - \diags(T_o) - m + n(n+1) \nonumber \\
        & = n(2n+1) - \diags(T_o) - m. \nonumber
    \end{align}
    This completes the proof of Proposition \ref{prop:area-shuffle}.
\end{proof}

\subsection{Domino shuffling and the dinv statistic}\label{sec:shuffle-dinv}

Given a domino tiling $T$ of the extended $\AD_n$. Let $T_o$, $\hat{T}_o$, $T_e$, and $\hat{T}_e$ be as in the beginning of Section \ref{sec:shuffle-area}. The dinv statistic on an odd-deficient tiling $T_o$ and even-deficient tiling $T_e$ are defined as
\begin{equation}\label{eq.def-dinv-odd/even}
    \dinv(T_o) = \dinv(\hat{T}_o)  \text{ and } \dinv(T_e) = \dinv(\hat{T}_e).
\end{equation}
We show below how the dinv statistic changes under the domino shuffling.
\begin{proposition}\label{prop.dinv-shuffle}
    Given a domino tiling $T$ of the extended $\AD_n$, let $T_o$ be the odd-deficient tiling of $T$ and $T_e=S(T_o)$ be the even-deficient tiling under the domino shuffling of $T_o$. Then
    \begin{equation}\label{eq.dinv-shuffle}
        \dinv(T_e) = \dinv(T_o) + n^2 + \binom{n+1}{2}.
    \end{equation}
\end{proposition}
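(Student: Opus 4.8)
The plan is to reduce \eqref{eq.dinv-shuffle} to a statement about the \emph{number of domino pairs} and then track these pairs through the shuffle. Since $\hat T_o$ is a domino tiling of the extended $\AD_n$ and $\hat T_e$ is a domino tiling of the extended $\AD_{n+1}$, the definitions \eqref{eq.def-dinv-odd/even} together with \eqref{eq.def-domino-dinv-reduce} give $\dinv(T_o)=\binom n2+\lvert\{\text{domino pairs in }\hat T_o\}\rvert$ and $\dinv(T_e)=\binom{n+1}2+\lvert\{\text{domino pairs in }\hat T_e\}\rvert$. Because $\binom{n+1}2-\binom n2=n$, the claim \eqref{eq.dinv-shuffle} is equivalent to $\lvert\{\text{domino pairs in }\hat T_e\}\rvert-\lvert\{\text{domino pairs in }\hat T_o\}\rvert=n^2+\binom n2$. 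I would compute each of these two counts using the vertical-domino formula \eqref{eq.dominopairs}, which lets me attribute each domino pair to one of its two dominoes and choose, independently for $\hat T_o$ and $\hat T_e$, whether that domino plays the role of a type I or a type II reference domino.

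Next I would classify the dominoes involved. The dominoes of $\hat T_o$ split into the dominoes of the odd-deficient tiling $T_o$ (the ``non-block'' dominoes) and the $2m$ horizontal dominoes completing its $m$ odd blocks; the dominoes of $\hat T_e$ split into the images under the shuffle of the non-block dominoes of $T_o$, the images under the shuffle of the $2n$ vertical dominoes attached along $y=-1$, and the $2(m+n+1)$ horizontal dominoes completing its even blocks. Under the shuffle each domino is translated by one unit according to its type (a vertical domino horizontally, a horizontal domino vertically). The key step is a finite case analysis showing that the shuffle, restricted to the non-block dominoes, carries domino pairs to domino pairs and permutes the four configurations of Figure~\ref{fig:dominopairs} among themselves: one checks, for each relative position of a reference vertical domino (of either type) and its partner, that the one-unit slides move both dominoes so that the alignment defining the pair --- which, after passing through the bijection of Section~\ref{sec:ADbijection}, is an alignment along the diagonals $y=x+c$ of the path model --- is preserved or exchanged for another valid one. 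Granting this, the non-block-to-non-block pairs cancel in the difference, and $\lvert\{\text{domino pairs in }\hat T_e\}\rvert-\lvert\{\text{domino pairs in }\hat T_o\}\rvert$ reduces to the number of domino pairs of $\hat T_e$ that touch a shuffled attached vertical or an even-block horizontal, minus the number of domino pairs of $\hat T_o$ that touch an odd-block horizontal.

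It then remains to evaluate this residual. For a domino pair whose partner is a horizontal domino completing an odd or even block, the height of that horizontal domino equals $\area(d_h)=\level(x)-1$, where $x=\pm1$ is the corresponding entry of $M_e(T)$ (Lemma~\ref{lem:area-level}); summing the resulting contributions over all blocks produces a sum of the form $\sum_{i,j}f(i,j)\,a_{i,j}$ over the entries $a_{i,j}$ of $M_e(T|_{\AD_n})$, which collapses to a $T$-independent constant using the row- and column-sum conditions for an ASM, exactly as in the passage \eqref{eq.area-simplify}--\eqref{eq.area-even/odd-contribution} of the proof of Proposition~\ref{prop:area-shuffle}. The remaining pairs involve only the new vertical dominoes --- the $n$ of the attached dominoes that slide leftward into $V(\hat T_e)$ and the $\binom{n+1}2-\binom n2=n$ extra forced vertical dominoes of the enlarged extended region --- and are counted by elementary position bookkeeping along the boundary. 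Adding everything, the residual equals $n^2+\binom n2$, which proves \eqref{eq.dinv-shuffle}. The main obstacle is the case analysis in the middle paragraph: establishing that the one-unit shuffle slides preserve the domino-pair relation on the non-block dominoes and permute the four configurations of Figure~\ref{fig:dominopairs}. Once that structural fact is secured, identifying the block contributions with a level statistic on $M_e(T)$ and enumerating the new boundary verticals are routine, and the algebraic collapse of the ASM sum is the same computation already carried out for the area statistic.
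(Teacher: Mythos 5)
Your reduction is sound: subtracting the $\binom{n}{2}$ and $\binom{n+1}{2}$ adjustment terms, the claim is indeed equivalent to the difference in the number of domino pairs being $n^2+\binom{n}{2}$, and your idea of attributing pairs to reference vertical dominoes via \eqref{eq.dominopairs} matches the paper's starting point. However, the two load-bearing steps of your argument are asserted rather than established, and the second one fails as described. First, the claim that the shuffle carries non-block domino pairs to domino pairs is nontrivial (the four domino types slide in four different directions, so the relative position of a reference vertical and its partner genuinely changes when the partner is horizontal), and the paper never proves such a pair-by-pair correspondence; it instead proves a \emph{count} equality per reference domino. Second, and more seriously, your residual computation cannot collapse ``exactly as in \eqref{eq.area-simplify}--\eqref{eq.area-even/odd-contribution}'': in the area argument each block horizontal contributes its \emph{level}, a function only of the position of the corresponding $\pm1$ entry of $M_e(T)$, which is why the ASM row- and column-sum identities telescope to a $T$-independent constant. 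For dinv, a block horizontal contributes the number of reference verticals it is diagonally aligned with, which depends on the whole tiling and is not a positional function of the ASM entry, so the same computation does not apply. This is precisely the hard part of the proposition, and it is where the paper's actual mechanism enters: Lemma \ref{lem.dinv-tpsum} converts $\dinv_1(d_v)$ and $\dinv_2(d_v)$ into the top partial sum $\tp(\cdot)$ of the associated entry of the extended ASM, and since $M_e(T_o)=M_o(T_e)$ is the \emph{same} matrix before and after shuffling, each non-ground vertical domino and its shuffled image automatically contribute equal amounts ($\dinv_1(d_v)=\tp(x)-1=\dinv_2(d'_v)$), with block partners and non-block partners handled uniformly through Lemma \ref{lem.nesum}. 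Your proposal has no substitute for this step.

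A secondary bookkeeping issue: you locate the surplus $n^2+\binom{n}{2}$ in ``new boundary verticals,'' but in the paper the $n^2$ comes from the $n$ ground verticals of $\hat{T}_e$ each having $\dinv_2=n$, while the $\binom{n}{2}$ arises as a correction term for the ground verticals of $\hat{T}_o$ (Remark \ref{rmk.dinv} and \eqref{eq.dinv-shufflecase2}), for which $\dinv_1(d_v)=\tp(x)-1$ fails without attaching the $2n$ extra vertical dominoes. Your sketch does not account for this failure mode at the boundary, so even the ``elementary position bookkeeping'' needs repair. To fix the proof you would need to either prove the shuffle-equivariance of the domino-pair relation directly (including pairs whose partner lies in a block) or import the ASM statistics $\mathsf{ne}$ and $\tp$ as the paper does.
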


As noted in \eqref{eq.dominopairs}, the number of domino pairs in $T$ can be expressed in two different ways as the sum of the dinv statistic on vertical dominoes in $V(T)$, depending on the choice of the reference domino in the domino pairs. The main idea of proving Proposition \ref{prop.dinv-shuffle} is to express the dinv of each vertical domino in $V(\hat{T}_o)$ or $V(\hat{T}_e)$ as a statistic on the extended ASM $M_e(T)$. We introduce the following two statistics on $M_e(T)$. Label the rows of $M_e(T)$ from bottom to top by $1$ to $2n+1$ and let $x$ be an entry of $M_e(T)$ at row $i$. The first statistic is called the \emph{northeastern sum} of $x$, denoted by $\mathsf{ne}(x)$, and is defined to be the sum of entries of $M_e(T)$ located on the northeastern side of $x$ (does not include $x$ itself) along a line parallel to $y=x$. For example, the second entry $x=0$ counted from the left at row $1$ in Figure \ref{fig:extasm} has $\mathsf{ne}(x) = 0+1+0+(-1)+1 = 1$. 

The second statistic is called the \emph{top partial sum} of $x$, denoted by $\tp(x)$, and is defined to be the sum of all entries of $M_e(T)$ located from the rows $i+1$ to $2n+1$ and to the right of $x$ on the same row. For example, let $x$ be the entry in the green box in Figure \ref{fig:extasm}, and note that $x$ is located at row $4$. Then $\tp(x)$ is the sum of entries located from the rows $5$ to $7$ and the entries to the right of $x$, that is, $\tp(x) = 0+1+1+0+(-1)+0+0 = 1$.

Observe that in any domino tiling $T$, one of the midpoints of the two longer edges of a domino must be an odd point, while the other midpoint must be an even point. This implies that one of the two midpoints is mapped to an entry of the extended ASM under $M_e$ or $M_o$. An entry $x$ of the extended ASM $M_e(T)$ (or $M_o(T)$) is said to be \emph{associated} with a domino in $T$ if $x = M_e(v)$ (or $M_o(v)$), where $v$ is an even (or odd) point and is the midpoint of the longer edge of that domino. We have seen that at the beginning of the paragraph, for each domino in $T$, there exists an entry of $M_e(T)$ (or $M_o(T)$) that is associated with it. We present a result of the statistic $\mathsf{ne}(x)$ in the following lemma, the interested reader can verify it from the example given in Figure \ref{fig:shuffleexample}.
\begin{lemma}\label{lem.nesum}
   Let $T$ be a domino tiling of the extended $\AD_n$ and $x$ be an entry of its extended ASM $M_e(T)$ or $M_o(T)$. Then
   \begin{equation}\label{eq.lemmanesum}
       \mathsf{ne}(x) = \begin{cases}
            0, & \text{ if $x=1$, } \\
            1, & \text{ if $x=-1$, } \\
            1, & \text{ if $x=0$ and $x$ is associated with a domino in $V(T) \cup H(T)$. } \\
            0, & \text{ if $x=0$ and $x$ is associated with a domino in $V^c(T) \cup H^c(T)$. }
       \end{cases}
   \end{equation}
\end{lemma}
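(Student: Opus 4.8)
The plan is to exploit the local structure of a domino tiling viewed as a graph, translated through the degree conditions \eqref{eq.ASMAD} that define $M_e$ and $M_o$. First I would fix an entry $x$ of $M_e(T)$ (the case of $M_o(T)$ is symmetric, interchanging the roles of even and odd points and the sign conventions in \eqref{eq.ASMAD}), and let $\ell$ be the diagonal line parallel to $y=x$ passing through the corresponding lattice point $v$. The key observation is that $\mathsf{ne}(x)$ is a telescoping quantity: the entries of $M_e(T)$ lying along $\ell$ alternate in sign in the ASM, and the partial sum of the $M_e$-values of the lattice points on $\ell$ strictly northeast of $v$ records whether that portion of the line $\ell$ enters the tiling graph on a domino edge or a ``gap.'' More precisely, I would track, as we walk along $\ell$ from the boundary of $\AD_n$ inward toward $v$, how the cumulative sum of $M_e$-values changes, and show it equals $1$ minus (the number of domino edges of $T$ crossed by $\ell$ on the northeast side of $v$, taken mod $2$ — but in fact it stabilizes to a genuine $0/1$ value by the alternating-sign property).

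The main steps, in order, are: (1) record that each lattice point $p$ on $\ell$ has degree $2$, $3$, or $4$ in the tiling graph, and that the $M_e$ (or $M_o$) value assigned to $p$ is $\pm1$ exactly when $\deg(p)=4$ or $\deg(p)=2$, i.e.\ exactly when $p$ is a ``corner'' where the two domino edges along $\ell$ through $p$ behave unusually; (2) show by a direct local analysis of the four domino types in Figure \ref{fig:key} (equivalently, the incidences at a lattice point) that the partial sums of $M_e$-values along $\ell$ form a $0/1$-valued sequence, and that this value at $v^+$ (just northeast of $v$) is $1$ precisely when the diagonal edge of $\ell$ immediately northeast of $v$ is covered by a domino whose configuration, relative to the checkerboard coloring, puts it in $V(T)\cup H(T)$ rather than $V^c(T)\cup H^c(T)$; (3) finally, combine this with the degree of $v$ itself: if $\deg(v)=4$ then $M_e(v)=1$ and the edge northeast of $v$ along $\ell$ is \emph{not} covered (the two edges at $v$ along $\ell$ are the two ``unused'' edges of the crossing of four dominoes at $v$... ) — wait, rather, a direct check gives $\mathsf{ne}(x)=0$; if $\deg(v)=2$ then $M_e(v)=-1$ and the complementary local picture gives $\mathsf{ne}(x)=1$; if $\deg(v)=3$ then $x=0$ and $\mathsf{ne}(x)$ is exactly the $0/1$ value from step (2), which by definition distinguishes the two sub-cases $V(T)\cup H(T)$ versus $V^c(T)\cup H^c(T)$.

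The cleanest way to organize steps (1)--(3) is to reduce everything to a single invariant: for each diagonal edge $e$ of $\ell$, set $\epsilon(e)=1$ if $e$ is covered by a domino and $0$ otherwise, and observe that walking inward along $\ell$ the sequence $\epsilon$ together with the degrees at the intervening lattice points forces the relation $M_e(p) = \epsilon(e_p^{\mathrm{in}}) - \epsilon(e_p^{\mathrm{out}})$ at each point $p$, where $e_p^{\mathrm{in}},e_p^{\mathrm{out}}$ are the edges of $\ell$ adjacent to $p$ on the two sides. Summing this telescoping identity from the northeast boundary (where $\epsilon = 0$ outside $\AD_n$, after adding the four boundary segments as in Section \ref{sec:ASMAD}) down to the edge just northeast of $v$ collapses to $\mathsf{ne}(x) = \epsilon(e^+_v)$, the indicator that the edge of $\ell$ immediately northeast of $v$ is covered. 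Then a finite check against Figure \ref{fig:key} identifies $\epsilon(e^+_v)=1$ with membership in $V(T)\cup H(T)$, which simultaneously settles all four cases of \eqref{eq.lemmanesum}.

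The hard part will be establishing the local telescoping identity $M_e(p) = \epsilon(e_p^{\mathrm{in}}) - \epsilon(e_p^{\mathrm{out}})$ cleanly, since this requires a careful enumeration of how a domino can sit relative to a fixed diagonal line $\ell$ and how its two long-edge midpoints (one odd, one even) contribute; getting the orientation/sign bookkeeping right — in particular that it is \emph{consistent} between the $M_e$ and $M_o$ pictures, which is what makes the lemma hold for both extended ASMs — is the delicate point. Everything else is a routine case check over the four domino types of Figure \ref{fig:key} together with the boundary normalization already fixed in Section \ref{sec:ASMAD}, and I would present it as such rather than belaboring each diagram.
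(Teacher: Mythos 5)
Your route is genuinely different from the paper's and, once one definitional point is repaired, it works. The paper disposes of the cases $x=\pm 1$ immediately from the alternating-sign property of ASM rows and columns, and for $x=0$ it runs a propagation argument (its Claims~1 and~2): starting from the domino associated with $x$, it examines the possible coverings of the unit square between consecutive even (or odd) points on the diagonal, shows the intervening entries are $0$ until the first nonzero entry is reached, and identifies that entry as $+1$ or $-1$ according to whether the associated domino lies in $V(T)\cup H(T)$ or $V^c(T)\cup H^c(T)$. Your telescoping identity packages the same local case analysis into a single ``discrete derivative'' statement and handles all four cases of the lemma uniformly, including $x=\pm1$; it is essentially the corner-sum-matrix/height-function point of view, and it makes the consistency between the $M_e$ and $M_o$ pictures automatic rather than requiring two parallel arguments. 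The cost is the same finite check over domino configurations at a lattice point that the paper performs, just organized differently.

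The one genuine defect is your definition of $\epsilon$. As literally stated (``$\epsilon(e)=1$ if $e$ is covered by a domino and $0$ otherwise'') it is vacuous: the segment $e$ joining consecutive points of $\ell$ is the diagonal of a unit square, and every unit square is covered by some domino, so this $\epsilon\equiv 1$ and the identity fails already at a $2\times 2$ block. The definition you need — and which your step (2) gestures at but your ``cleanest'' paragraph does not adopt — is $\epsilon(e)=1$ if and only if the domino covering the unit square whose diagonal is $e$ belongs to $V(T)\cup H(T)$. With that definition the local identity $M(p)=\epsilon(e_p^{\mathrm{SW}})-\epsilon(e_p^{\mathrm{NE}})$ does hold for every lattice point $p$ on $\ell$, for $M=M_e$ at even points and $M=M_o$ at odd points simultaneously: the degree-$2$, degree-$3$, and degree-$4$ cases each reduce to observing which of the four edges at $p$ are domino boundaries, hence in which directions the dominoes covering the two squares of $\ell$ adjacent to $p$ can extend, and reading off their type from the checkerboard coloring. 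The boundary normalization $\epsilon=0$ beyond the outermost point then gives $\mathsf{ne}(x)=\epsilon(e_v^{+})$ by telescoping. Finally, for $x=0$ you must also observe that the associated domino is exactly the unique domino straddling an edge at $v$, so it covers one of the two squares of $\ell$ adjacent to $v$, and since $\epsilon(e_v^{+})=\epsilon(e_v^{-})$ in this case its type determines $\mathsf{ne}(x)$; this closes all four cases of \eqref{eq.lemmanesum}. With these repairs your plan is a complete and somewhat cleaner proof than the one in the paper.
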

\begin{proof}
    Let $v_1,v_2,\dots,v_d$ be the entries of the extended ASM located on the northeastern side of $x$ along a line parallel to $y=x$. Note that $\mathsf{ne}(x) = \sum_{i=1}^{d}v_i$. According to the definition of the ASM (Section \ref{sec:ASMAD}), it is clear that if $x=1$, then $\mathsf{ne}(x)=0$, and if $x=-1$, then $\mathsf{ne}(x) = 1$. When $x=0$, we prove the lemma for the cases when $x$ is associated with a vertical domino in $V(T)$ or in $V^c(T)$; the cases when $x$ is associated with a horizontal domino in $H(T)$ or in $H^c(T)$ is omitted here, as they can be addressed in a similar manner.
    
    We proceed with the following two claims. 
    
    \noindent \textbf{Claim 1.} Assuming $x=0$ and $x$ is associated with a vertical domino in $V(T)$, then there must exist $j \leq d$ such that $v_j = 1$ and $v_1 = v_2 = \cdots = v_{j-1} = 0$. 
    \begin{proof}[Proof of Claim 1.]
        Figure \ref{fig:neeven} shows the case when $x$ is an entry of $M_e(T)$ (i.e., the image of an even point under the map $M_e$). Let $p_0,p_1,\dots,p_d$ be the even points corresponding to $x,v_1,\dots,v_{d}$, respectively. In the domino tiling $T$, there are four ways to cover the unit square marked $X$ in Figure \ref{fig:neeven}. For the two cases on the right, $\deg(p_1) = 4$, implying $v_1 = M_e(P_1) = 1$, and the claim holds. For the two cases on the left, $\deg(p_1)=3$, giving $v_1 = M_e(P_e) = 0$. We then proceed to examine the unit square between $p_2$ and $p_3$ in the same way. If $v_2 = 1$, the claim holds. If $v_2 = 0$, then we continue this process until we find $v_j = 1$ for some $j \leq d$. This process will stop: the worst case is when $v_1=\cdots=v_{d-1} = 0$, but this forces $v_d = 1$ since $p_d$ is on the boundary of the extended $\AD_n$ and $\deg(p_d)=4$.

        On the other hand, Figure \ref{fig:neodd} illustrates the other case when $x$ is an entry of $M_o(T)$ (i.e., the image of an odd point under the map $M_o$). Let $q_0,q_1,\dots,q_d$ be the odd points corresponding to $x,v_1,\dots,v_{d}$, respectively. In the domino tiling $T$, there are three possible ways to cover the unit square marked $X$ in Figure \ref{fig:neodd}. The leftmost case is not possible because $\deg(q_0)=2$ which implies $x = 1$ under the map $M_o$. For the two cases on the right, $\deg(q_1) < 4$, implying $v_1 \neq -1$. We then consider covering the unit square between $q_1$ and $q_2$ in a similar way. If this results in $v_1 = 1$, then the claim holds. If $v_1 = 0$, then this process continues until we find $v_j = 1$ for some $j \leq d$. This process will terminate for the same reason described in the previous paragraph: if $v_1= \cdots =v_{d-1} = 0$, then $v_d = 1$.
    \end{proof} 
    \begin{figure}[hbt!]
        \centering
        \subfigure[]{\label{fig:neeven}\includegraphics[height=0.16\textwidth]{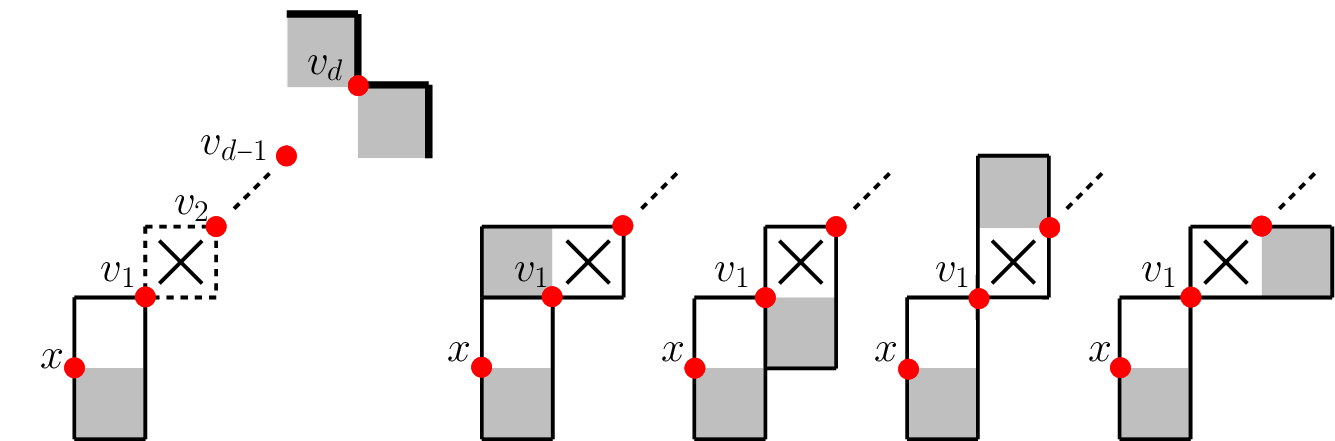}}
        \hspace{5mm}
        \subfigure[]{\label{fig:neodd}\includegraphics[height=0.16\textwidth]{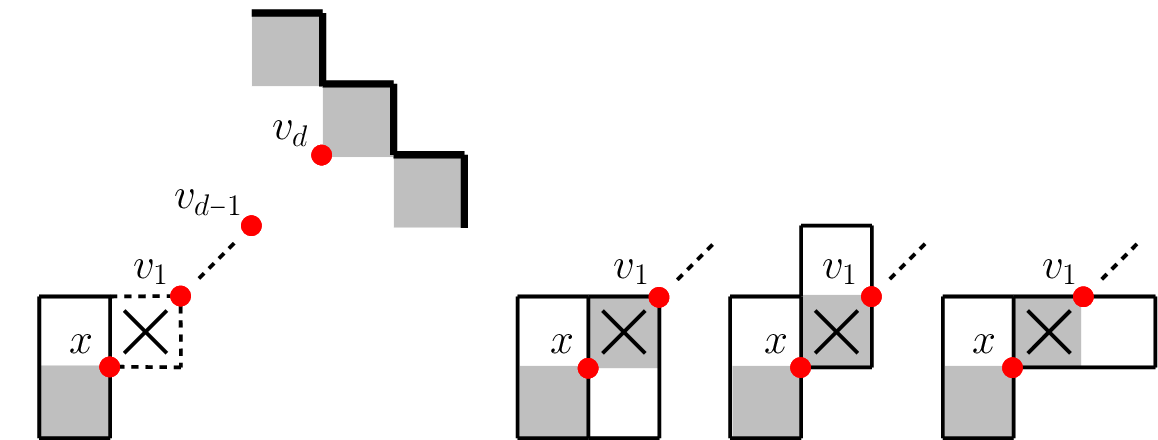}}
        \caption{The possible tilings around the vertical domino in $V(T)$ that $x$ is associated with in Claim 1, where (a) $x$ is an entry of $M_e(T)$, and (b) $x$ is an entry of $M_o(T)$.}\label{fig:nesum}
    \end{figure}
    \noindent \textbf{Claim 2.} We assume $x=0$ and $x$ is associated with a vertical domino in $V^c(T)$. If there exists $j \leq d$ such that $v_j \neq 0$ and $v_1 = v_2 = \cdots = v_{j-1} = 0$, then $v_j = -1$. 
    \begin{proof}[Proof of Claim 2.]
        We assume the contrary that $v_j = 1$. Figure \ref{fig:neeven2} shows the case when $x$ is an entry of $M_e(T)$. Let $p_0,p_1,\dots,p_j$ be the even points corresponding to $x,v_1,\dots,v_{j}$, respectively. We have $\deg(p_j) = 4$ and $\deg(p_i)=3$ for $i=1,\dots,j-1$. As a consequence, in the domino tiling $T$, there are two ways to cover the unit square marked $X$ in Figure \ref{fig:neeven2} and the edges of dominoes must contain the ``southwestern hook'' of each $p_i$ for $i=1,\dots,j-1$, that is, the union of the two edges pointing west and south from $p_i$. It turns out that there is only one way to cover the unit square between $x$ and $v_1$ which makes $\deg(p_0)=2$ and $x=M_e(p_0)=-1$, this contradicts to the assumption that $x=0$.
        
        Figure \ref{fig:neodd2} shows the other case when $x$ is an entry of $M_o(T)$. Let $q_0,q_1,\dots,q_j$ be the odd points corresponding to $x,v_1,\dots,v_{j}$, respectively. We have $\deg(q_j) = 2$ and $\deg(q_i)=3$ for $i=1,\dots,j-1$. Consequently, in the domino tiling $T$, there are two ways to cover the unit square marked $X$ in Figure \ref{fig:neodd2} and the edges of dominoes must contain the ``northeastern hook'' of each $q_i$ for $i=1,\dots,j-1$. This leads to $\deg(q_1) = 4$ and $v_1 = M_o(q_1) = -1$, which violates the assumption that $v_1 = 0$. 
    \end{proof}
    \begin{figure}[hbt!]
    \centering
    \subfigure[]{\label{fig:neeven2}\includegraphics[height=0.16\textwidth]{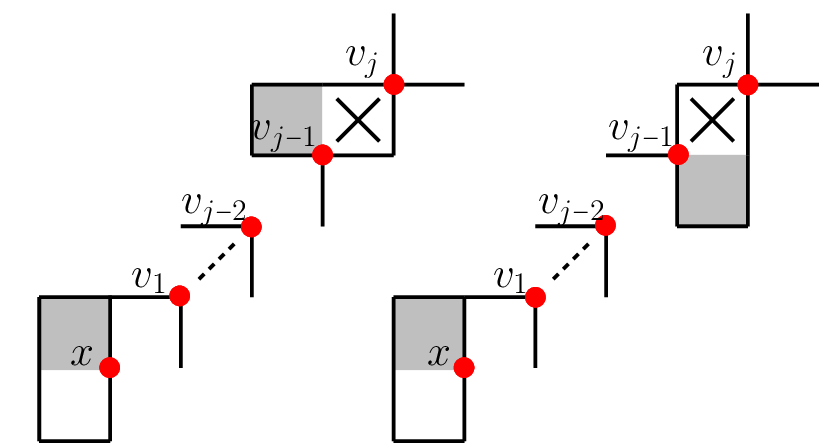}}
    \hspace{10mm}
    \subfigure[]{\label{fig:neodd2}\includegraphics[height=0.16\textwidth]{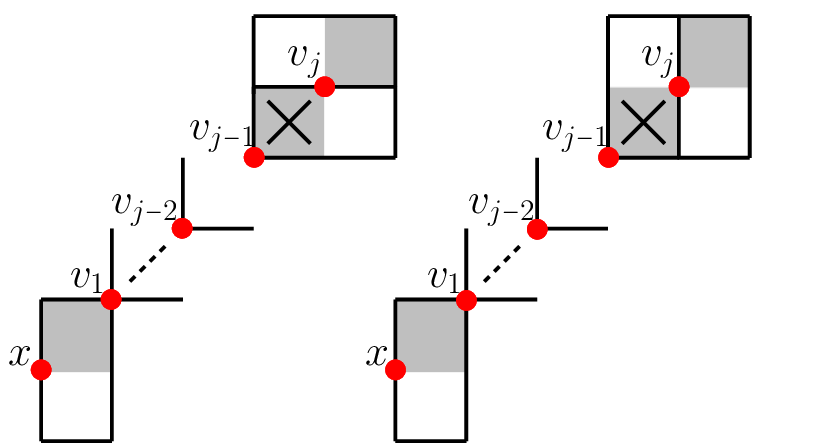}}
    \caption{The possible tilings around the vertical domino in $V^c(T)$ that $x$ is associated with in Claim 2, where (a) $x$ is an entry of $M_e(T)$, and (b) $x$ is an entry of $M_o(T)$.}\label{fig:nesum2}
    \end{figure}

    If $x=0$ and $x$ is associated with a vertical domino in $V(T)$, then Claim 1 implies that 
    \begin{equation*}
        \mathsf{ne}(x) = \sum_{i=1}^{j-1} v_j + v_j + \mathsf{ne}(v_j) = 0 +1 + 0 = 1.
    \end{equation*}
    Finally, when $x=0$ and $x$ is associated with a vertical domino in $V^c(T)$, if all the $v_i$'s are $0$, then $\mathsf{ne}(x) = 0$. If not, then Claim 2 implies that
    \begin{equation*}
        \mathsf{ne}(x) = \sum_{i=1}^{j-1} v_j + v_j + \mathsf{ne}(v_j) = 0 + (-1) + 1 = 0.
    \end{equation*}
    This completes the proof of Lemma \ref{lem.nesum}.
\end{proof}

Before stating the result of the second statistic $\tp$, we partition the set $V(T) = V_g(T) \cup V_b(T)$, where $V_g(T)$ consists of vertical dominoes lying on the ground (i.e., the bottom edge of a domino lies on $y=0$) and are not in $\AD_n$, and $V_b(T) = V(T) \setminus V_g(T)$. Notice that for a domino tiling $T$ of the extended $\AD_n$, $|V_g(T)|=n-1$. In the following lemma, the relation between statistics $\dinv_1$, $\dinv_2$ (Section \ref{sec:ADbijection}) and $\tp$ is presented.
\begin{lemma}\label{lem.dinv-tpsum}
    Given a domino tiling $T$ of the extended $\AD_n$, let $d_v$ be a vertical domino in the set $V_b(T)$. Suppose the entry $x$ of $M_e(T)$ and the entry $y$ of $M_o(T)$ are associated with $d_v$. Note that $x=0 \text{ or } -1$ under the map $M_e$, and $y=0 \text{ or } 1$ under the map $M_o$. Then
    \begin{equation}\label{eq.dinv-tp}
        \dinv_1(d_v) = \tp(x) - 1 \text{ and } \dinv_2(d_v) = \tp(y) - \delta_{y,0},
    \end{equation}
    where $\delta$ denotes the Kronecker delta function.
\end{lemma}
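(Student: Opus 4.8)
Plan. The idea is to route the computation through the northeastern-sum statistic $\mathsf{ne}$, which is already pinned down by Lemma \ref{lem.nesum}. Fix $d_v \in V_b(T)$ and let $x$ be the entry of $M_e(T)$ and $y$ the entry of $M_o(T)$ associated with $d_v$ (these are the two long-edge midpoints of $d_v$, one even, one odd). Writing $x = a_{i,j}$ in the coordinates of $M_e(T|_{\AD_n})$, so that $\level(x) = i+j-1 =: \ell$ by \eqref{eq.levelcoordinate}, the first step is the purely algebraic identity
\[
    \tp(x) \;=\; \mathsf{ne}(x)\;+\;\sum_{i'<i}\mathsf{ne}\bigl(a_{i',\,i+j-1-i'}\bigr)\;+\;\sum_{i'>i}\mathsf{ne}\bigl(a_{i',\,i+j-i'}\bigr),
\]
i.e.\ $\tp(x)$ is $\mathsf{ne}(x)$ plus the sum of $\mathsf{ne}$ over the entries of level $\ell$ lying to the left of $x$ (row index $>i$) and the entries of level $\ell-1$ lying below-and-to-the-right of $x$ (row index $<i$). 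To prove it, group the entries summed in $\tp(x)$ by their row index $i'$; for each $i'$ the contribution is a tail sum $\sum_{j'\ge m}a_{i',j'}$, which since each row of an ASM sums to $1$ equals $1-\sum_{j''\le m-1}a_{i',j''}=\mathsf{ne}(a_{i',\,m-1})$, and the indices collapse exactly as displayed. The extra zero entries of the \emph{extended} ASM contribute nothing, so the identity may be read on $M_e(T|_{\AD_n})$ or on $M_e(T)$, and it holds verbatim with $y$ and $M_o(T)$ in place of $x$ and $M_e(T)$.

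Second step: matching with $\dinv_1$ and $\dinv_2$. By Lemma \ref{lem.nesum} each summand $\mathsf{ne}(\cdot)$ above equals $1$ exactly when the entry is $-1$ or is a $0$ associated with a domino of $V(T)\cup H(T)$, and is $0$ otherwise; hence $\sum_{i'\neq i}\mathsf{ne}(\cdot)$ counts precisely the dominoes of $V(T)\cup H(T)$ whose associated even point lies on these two anti-diagonals (together with the $-1$-entries there, which one checks correspond to such dominoes in the required positions). Translating levels into heights via \eqref{eq.levelcoordinate}, the anti-diagonals of levels $\ell$ and $\ell-1$ are the row of the bottom cell of $d_v$ and the row of its top cell, and the two sides of $x$ are "left of $d_v$" and "right of $d_v$"; a position-by-position check — using the checkerboard parity to decide, for each horizontal domino in these two rows, which of its two long-edge midpoints is the even point and hence on which anti-diagonal its $M_e$-entry lies — identifies $\sum_{i'\neq i}\mathsf{ne}(\cdot)$ with the four families counted by $\dinv_1(d_v)$ (horizontal dominoes of $H(T)$ to the left in the top row, vertical dominoes of $V(T)$ to the left bottom-aligned, horizontal dominoes of $H(T)$ to the right in the bottom row, vertical dominoes of $V(T)$ to the right top-aligned; cf.\ Figure \ref{fig:dominotypeI}). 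Thus $\tp(x)=\dinv_1(d_v)+\mathsf{ne}(x)$, and since $d_v\in V(T)$ Lemma \ref{lem.nesum} forces $\mathsf{ne}(x)=1$ whether $x=0$ or $x=-1$, giving the first identity. Running the same argument on $M_o(T)$ with $y$ in place of $x$ — the Type II alignment of the flanking vertical dominoes in Figure \ref{fig:dominotypeII} being exactly what passing from the even to the odd long-edge midpoint produces — yields $\tp(y)=\dinv_2(d_v)+\mathsf{ne}(y)$, and Lemma \ref{lem.nesum} gives $\mathsf{ne}(y)=1$ if $y=0$ and $\mathsf{ne}(y)=0$ if $y=1$, i.e.\ $\mathsf{ne}(y)=\delta_{y,0}$, which is the second identity.

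The first step is routine rearrangement and the conceptual work of the second step is done by Lemma \ref{lem.nesum}, so the main obstacle is the position-by-position identification of $\sum_{i'\neq i}\mathsf{ne}(\cdot)$ with the four domino families: one must verify, for each family and for each of the even and odd labelings of $d_v$, that the associated ASM entry lands on the correct anti-diagonal and the correct side of the reference entry, and that every $\pm1$-entry occurring on those anti-diagonals is accounted for by exactly one counted domino (so nothing is double-counted or omitted). As in the proof of Lemma \ref{lem.nesum}, the $\pm1$-entries are handled by the "southwestern/northeastern hook" propagation along maximal runs of degree-$3$ points, and it is worth including a figure for each of the two midpoint labelings to make the correspondence transparent.
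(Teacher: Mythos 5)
Your first identity is handled essentially as in the paper: you decompose $\tp(x)$ into $\mathsf{ne}(x)$ plus a sum of $\mathsf{ne}$-values over the entries on two adjacent anti-diagonals, then use Lemma \ref{lem.nesum} to turn each summand into the indicator of ``the associated domino forms a type~I pair with $d_v$.'' That is exactly the paper's identity $\tp(x)=\mathsf{ne}(x)+\sum_u\mathsf{ne}(u)$ together with its Claim~3, and your tail-sum bookkeeping and the evaluation $\mathsf{ne}(x)=1$ are fine.

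The gap is in the second half, where you assert the argument runs ``verbatim'' on $M_o(T)$. It does not, because under $M_o$ the meanings of $+1$ and $-1$ are reversed relative to $M_e$: by \eqref{eq.ASMAD}, an entry $+1$ of $M_o$ is the center of a $2\times 2$ block (degree $2$) and an entry $-1$ is a degree-$4$ vertex, while Lemma \ref{lem.nesum} still gives $\mathsf{ne}(u)=1$ for $u=-1$ and $\mathsf{ne}(u)=0$ for $u=1$. So if you literally reuse your rule ``$\mathsf{ne}(u)=1$ exactly when the entry is $-1$ or a $0$ associated with a domino of $V(T)\cup H(T)$,'' you count every degree-$4$ odd point (which contributes no type~II pair) and miss every even-block center (which contributes exactly one). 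The paper's proof replaces the indicator by $u+\mathsf{ne}(u)$ for the $M_o$ case (its Claim~4) and accordingly decomposes $\tp(y)$ as $\mathsf{ne}(y)+\sum_u\bigl(u+\mathsf{ne}(u)\bigr)$ over a \emph{shifted} selection of entries — those on the line through $y$ to the right of $d_v$ and on the line one level above to the left of $d_v$ (the lines $\ell_2$ and $\ell_3$ of Figure \ref{fig:tpodd}) — not the mirror image of the $M_e$ selection. Your final formula $\dinv_2(d_v)=\tp(y)-\delta_{y,0}$ is correct, and since any valid decomposition of the fixed number $\tp(y)$ gives the same total, the numerical conclusion is salvageable; but the term-by-term identification of your summands with individual type~II domino pairs, which is what your argument needs, fails as stated. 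To repair it you must adopt the modified indicator $u+\mathsf{ne}(u)$ and the corresponding choice of lines.
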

\begin{proof} 
    Since $d_v \in V_b(T)$, the four configurations of the domino pair with $d_v$ may exist in $T$. The situation when $d_v \in V_g(T)$ will be discussed in Remark \ref{rmk.dinv}. We consider the first case when $x$ is the entry of $M_e(T)$ and is on the line $\ell_2$; see Figure \ref{fig:tpeven} for an illustration. In this case, $x = -1$ or $x=0$, hence $\mathsf{ne}(x) = 1$. Viewing $d_v$ as the type I reference domino (Figure \ref{fig:dominotypeI}), we proceed with the following claim about the statistic $\mathsf{ne}$ and domino pairs. 
\begin{figure}[hbt!]
    \centering
    \subfigure[]{\label{fig:tpeven}\includegraphics[height=0.16\textwidth]{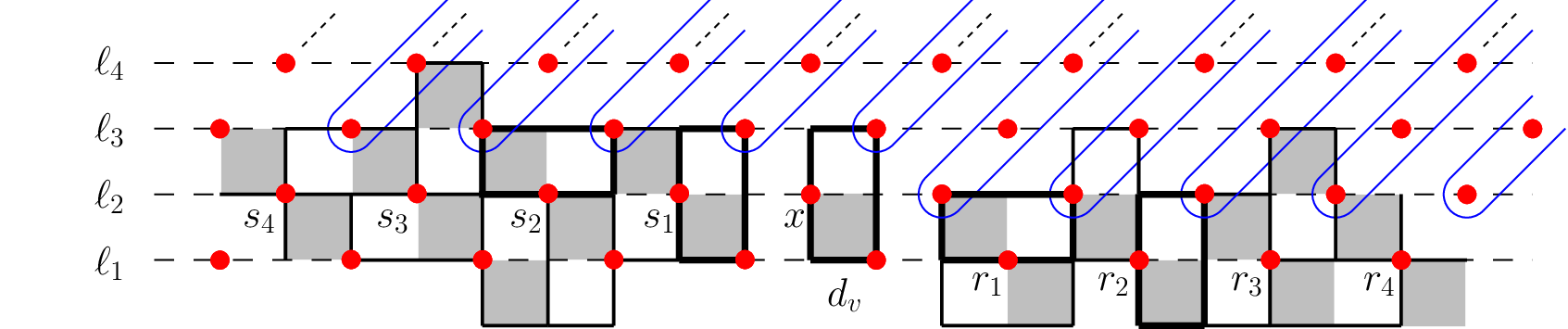}}
    \subfigure[]{\label{fig:tpodd}\includegraphics[height=0.16\textwidth]{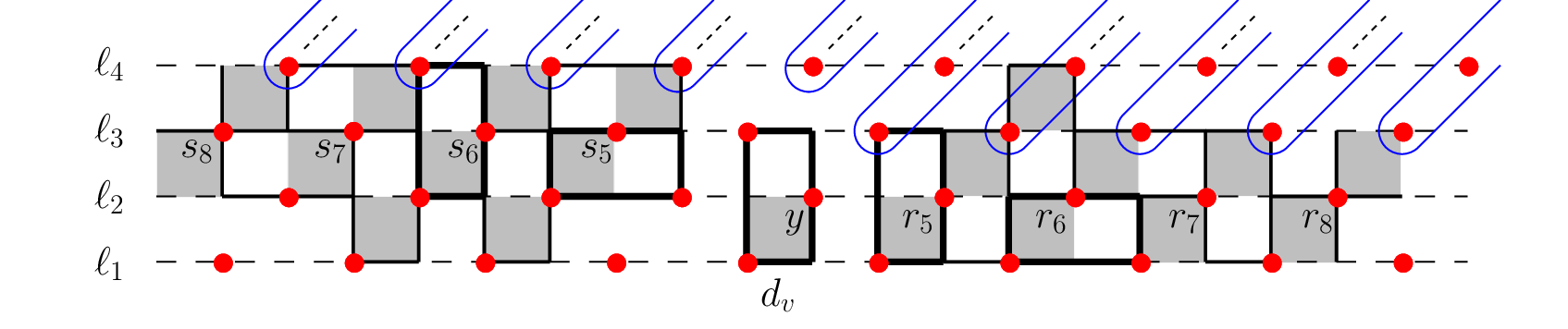}}
    \caption{An illustration of domino pairs and non-domino pairs involving $d_v$, dominoes forming a domino pair with $d_v$ are highlighted in bold, where (a) $x$ is associated with $d_v$ as an entry of $M_e(T)$, and (b) $y$ is associated with $d_v$ as an entry of $M_o(T)$.} \label{fig:tpsum}
\end{figure}

    \noindent \textbf{Claim 3.}  Let $u$ be an entry of $M_e(T)$ on the line $\ell_2$ and the left of $d_v$, or on the line $\ell_1$ and the right of $d_v$. Then the domino that $u$ is associated with forms a domino pair with $d_v$ if and only if $\mathsf{ne}(u) = 1$.
    \begin{proof}[Proof of Claim 3.]
        We discuss the following three possible values of $u$.
        \begin{itemize}
            \item $u=-1$. Since $u$ corresponds to an odd block in $T$ filled by two horizontal or two vertical dominoes, in either case, one of the dominoes forms a domino pair with $d_v$ (such as the dominoes that $s_1$ and $r_1$ are associated with in Figure \ref{fig:tpeven}), contributing $1$ to the $\dinv_1$ statistic. Lemma \ref{lem.nesum} implies $\mathsf{ne}(u)=1$.
            \item $u=0$. If $u$ is associated with a domino which forms a domino pair with $d_v$ (such as the dominoes that $s_2$ and $r_2$ are associated with in Figure \ref{fig:tpeven}), that is, they belong to the set $V(T) \cup H(T)$, then Lemma \ref{lem.nesum} implies $\mathsf{ne}(u)=1$. On the other hand, if $u$ is associated with a domino which does not form a domino pair with $d_v$ (for example, the dominoes that $s_3$ and $r_3$ are associated with in Figure \ref{fig:tpeven}), that is, they belong to the set $V^c(T) \cup H^c(T)$, then $\mathsf{ne}(u)=0$ by Lemma \ref{lem.nesum}.
            \item $u=1$. The even point corresponding to $u$ has degree $4$, drawn as $s_4$ and $r_4$ in Figure \ref{fig:tpeven}. No matter how we place dominoes incident to this even point, none of them forms a domino pair with $d_v$. By Lemma \ref{lem.nesum}, $\mathsf{ne}(u)=0$. 
        \end{itemize}
        From the above discussion, we see that $u$ is associated with a domino that forms a domino pair with $d_v$ if and only if $\mathsf{ne}(u)=1$.    
    \end{proof}
    In this case, we observe that the top partial sum of $x$ can be written as
    \begin{equation*}
        \tp(x) = \mathsf{ne}(x)+ \sum_{u} \mathsf{ne}(u),
    \end{equation*}
    where $u$ runs over all the entries of $M_e(T)$ on $\ell_2$ and the left of $d_v$ or on $\ell_1$ and the right of $d_v$. By Claim 3, $\dinv_1(d_v) = \tp(x) - \mathsf{ne}(x) = \tp(x)-1$.

    Now, we look at the second case when $y$ is the entry of $M_o(T)$ and is on the line $\ell_2$; see Figure \ref{fig:tpodd} for an illustration. In this case, $y = 1$ or $y=0$, hence $\mathsf{ne}(y) = \delta_{y,0}$. Viewing $d_v$ as the type II reference domino (Figure \ref{fig:dominotypeII}), we proceed similarly with the following claim.

    \noindent \textbf{Claim 4.}  Let $u$ be an entry of $M_o(T)$ on the line $\ell_2$ and the right of $d_v$, or on the line $\ell_3$ and the left of $d_v$. Then the domino that $u$ is associated with forms a domino pair with $d_v$ if and only if $u+\mathsf{ne}(u) = 1$.
    \begin{proof}[Proof of Claim 4.]
        We similarly analyze the three possible values of $u$, as follows:
        \begin{itemize}
            \item $u=1$. Since $u$ corresponds to an even block in $T$ filled by two horizontal or two vertical dominoes, in either case, one of the dominoes forms a domino pair with $d_v$ (such as the dominoes that $s_5$ and $r_5$ are associated with in Figure \ref{fig:tpodd}), contributing $1$ to the $\dinv_2$ statistic. Lemma \ref{lem.nesum} implies $u + \mathsf{ne}(u)=1$.
            \item $u=0$. If $u$ is associated with a domino which forms a domino pair with $d_v$ (such as the dominoes that $s_6$ and $r_6$ are associated with in Figure \ref{fig:tpodd}), that is, they belong to the set $V(T) \cup H(T)$, then Lemma \ref{lem.nesum} implies $u+\mathsf{ne}(u)=1$. On the other hand, if $u$ is associated with a domino which does not form a domino pair with $d_v$ (for example, the dominoes that $s_7$ and $r_7$ are associated with in Figure \ref{fig:tpodd}), that is, they belong to the set $V^c(T) \cup H^c(T)$, then $u+\mathsf{ne}(u)=0$ by Lemma \ref{lem.nesum}.
            \item $u=-1$. The odd point corresponding to $u$ has degree $4$, shown as $s_8$ and $r_8$ in Figure \ref{fig:tpodd}. No matter how we place dominoes incident to this odd point, none of them forms a domino pair with $d_v$. By Lemma \ref{lem.nesum}, $u+\mathsf{ne}(u)=0$. 
        \end{itemize}
        From the above discussion, Claim 4 holds.   
    \end{proof}
    In this case, $\tp(y)$ is slightly different from $\tp(x)$ mentioned in the first case, it is given by 
    \begin{equation*}
        \tp(y) = \mathsf{ne}(y) + \sum_{u} (u+\mathsf{ne}(u)),
    \end{equation*}
    where $u$ runs over all the entries of $M_o(T)$ on $\ell_3$ and the left of $d_v$ or on $\ell_2$ and the right of $d_v$. By Claim 4, $\dinv_2(d_v) = \tp(y) - \mathsf{ne}(y) = \tp(y)- \delta_{y,0}$. This completes the proof of Lemma \ref{lem.dinv-tpsum}.
\end{proof}
\begin{remark}\label{rmk.dinv}
    Let $T,x,y$ be the same as in Lemma \ref{lem.dinv-tpsum}. The identity $\dinv_1(d_v) = \tp(x) -1$ does not hold for $d_v \in V_g(T)$. The issue arises when viewing $d_v$ as the type I reference domino, one of the configurations of the domino pairs involving $d_v$ does not exist in $T$ (see the vertical domino that $r_2$ is associated with in Figure \ref{fig:tpeven}), and this discrepancy is not accurately reflected in $M_e(T)$. Hence, $\dinv_1(d_v) < \tp(x) -1$. However, if we attach $2n$ vertical dominoes at the bottom of $T$ (like the dominoes highlighted in blue in Figure \ref{fig:exTodd}), then this does give the correct identity $\dinv_1(d_v) = \tp(x) -1$ for $d_v \in V_g(T)$. On the other hand, all four configurations of the domino pairs could exist when viewing $d_v$ as the type II reference domino. Thus, $\dinv_2(d_v) = \tp(y) - \delta_{y,0}$ remains valid for $d_v \in V_g(T)$. 
\end{remark}

We are now ready to prove Proposition \ref{prop.dinv-shuffle}.
\begin{proof}[Proof of Proposition \ref{prop.dinv-shuffle}.]  
    We compute the dinv statistic on the odd-deficient tiling $T_o$ using the type I reference domino and on the even-deficient tiling $T_e$ using the type II reference domino. We also partition the set $V(\hat{T}_o)$ and $V(\hat{T}_e)$ described in the paragraph before Lemma \ref{lem.dinv-tpsum}. By \eqref{eq.def-domino-dinv-reduce}, \eqref{eq.dominopairs}, and \eqref{eq.def-dinv-odd/even}, we may express the dinv statistic as
    \begin{align}
        \dinv(T_o) = \dinv(\hat{T}_o) & = \binom{n}{2} + \sum_{d_v \in V_g(\hat{T}_o)} \dinv_1(d_v) + \sum_{d_v \in V_b(\hat{T}_o)} \dinv_1(d_v), \label{eq.dinvTo1}\\
        \dinv(T_e) = \dinv(\hat{T}_e) & = \binom{n+1}{2} + \sum_{d^{\prime}_v \in V_g(\hat{T}_e)} \dinv_2(d^{\prime}_v) + \sum_{d^{\prime}_v \in V_b(\hat{T}_e)} \dinv_2(d^{\prime}_v). \label{eq.dinvTe1}
    \end{align}  
    Since $\hat{T}_e$ is the domino tiling of the extended $\AD_{n+1}$, $|V_g(\hat{T}_e)| = n$. For each vertical domino $d^{\prime}_v \in V_g(\hat{T}_e)$, it is not hard to see that there are $n$ dominoes that form a domino pair with $d^{\prime}_v$ (viewing as the type II reference domino), thus $\dinv_2(d^{\prime}_v) = n$. Consequently,
    \begin{equation}\label{eq.dinvVg}
        \sum_{d^{\prime}_v \in V_g(\hat{T}_e)} \dinv_2(d^{\prime}_v) = n^2.
    \end{equation}
    Comparing \eqref{eq.dinv-shuffle} with \eqref{eq.dinvTo1}, \eqref{eq.dinvTe1}, and \eqref{eq.dinvVg}, our proof will be complete once we prove the following identity.
    \begin{equation}\label{eq.dinv-shuffle-temp}
        \sum_{d^{\prime}_v \in V_b(\hat{T}_e)} \dinv_2(d^{\prime}_v) = \dinv(\hat{T}_o).
    \end{equation}

    The vertical dominoes in the set $V_b(\hat{T}_e)$ come from the vertical dominoes in the set $V(\hat{T}_o) = V_g(\hat{T}_o) \cup V_b(\hat{T}_o)$ under the domino shuffling by moving each such domino one unit to its left. To prove \eqref{eq.dinv-shuffle-temp}, we analyze these vertical dominoes in the following two cases. 
    
    \noindent Case (1). A vertical domino $d^{\prime}_v$ in $V_b(\hat{T}_e)$ is obtained from a vertical domino $d_v$ in $V_b(\hat{T}_o)$. We assume that an entry $x$ of $M_e(T_o)$ is associated with $d_v$, it is easy to see that after shuffling, $d^{\prime}_v$ is the domino that the same entry $x$ is associated with, where $x$ is an entry of $M_o(T_e)$, the same extended ASM as $M_e(T_o)$. Based on the definition of $\hat{T}_o$ and $\hat{T}_e$, we note that $d_v$ is not contained in odd blocks of $\hat{T}_o$ and $d^{\prime}_v$ is not contained in even blocks of $\hat{T}_o$. This fact implies that $x=0$. By Lemma \ref{lem.dinv-tpsum}, for all $d_v \in V_b(\hat{T}_o)$, 
    \begin{equation}\label{eq.dinv-shufflecase1}
        \dinv_1(d_v) = \tp(x) -1 = \dinv_2(d^{\prime}_v), 
    \end{equation}
    where $d^{\prime}_v$ is the shuffle of $d_v$. 
    
    \noindent Case (2). A vertical domino $d^{\prime}_v$ in $V_b(\hat{T}_e)$ is obtained from a vertical domino $d_v$ in $V_g(\hat{T}_o)$. As explained in Remark~\ref{rmk.dinv}, Lemma~\ref{lem.dinv-tpsum} does not hold unless $2n$ vertical dominoes are attached at the bottom of $\hat{T}_o$. In fact, these attached dominoes increase the number of domino pairs involving $d_v \in V_g(\hat{T}_o)$. To be more precise, the $\dinv_1$ statistic of each $d_v$ from left to right is increased by $n-1,n-2,\dotsc,1$. As a result, 
    \begin{equation}\label{eq.dinv-shufflecase2}
        \sum_{d_v \in V_g(\hat{T}_o)} \dinv_1(d_v) + \binom{n}{2} = \sum_{x \in M_e(\hat{T}_o)} (\tp(x) - 1) = \sum_{d^{\prime}_v}\dinv_2(d^{\prime}_v),
    \end{equation}
    where $x$ is associated with each domino $d_v \in V_g(\hat{T}_o)$ and $d^{\prime}_v$ is the shuffle of each $d_v$. Note that the second equality follows directly from Lemma \ref{lem.dinv-tpsum} and the fact that $x=0$.

    Summing over all $d_v \in V_b(\hat{T}_o)$ on both sides of \eqref{eq.dinv-shufflecase1} and combining with \eqref{eq.dinv-shufflecase2}, we obtain
    \begin{equation}\label{eq.dinv-shufflecombine}
        \sum_{d_v \in V_b(\hat{T}_o)} \dinv_1(d_v) + \sum_{d_v \in V_g(\hat{T}_o)} \dinv_1(d_v) + \binom{n}{2} = \sum_{d^{\prime}_v \in V_b(\hat{T}_e)} \dinv_2(d^{\prime}_v).
    \end{equation}
    The left-hand side of \eqref{eq.dinv-shufflecombine} coincides with $\dinv(\hat{T}_o)$, we then obtain the desired identity \eqref{eq.dinv-shuffle-temp}. This completes the proof of Proposition \ref{prop.dinv-shuffle}.
\end{proof}

\subsection{Proof of Theorem \ref{thm:introAD}}\label{sec:pfADformula}

We remind the reader from \eqref{eq.P-polynomial} and \eqref{eq.ADdef} that the polynomial $\AD_n(z;q,t)$ is the generating polynomial of domino tilings of $R_{\lambda}$, where $\lambda = (n^n)$.
\begin{equation}\label{eq.ADpolynomial}
    \AD_n(z;q,t) = \sum_{T \in \T \left( R_{\lambda} \right)} z^{\diags(T)} q^{\area(T)} t^{\dinv(T)}.
\end{equation}
For a subset $\mathcal{D}$ of $\T(R_{\lambda})$, we define the generating polynomial of domino tilings of $\mathcal{D}$ by
\begin{equation}
    \AD_n(\mathcal{D};z;q,t) = \sum_{T \in \mathcal{D}}z^{\diags(T)} q^{\area(T)} t^{\dinv(T)}.
\end{equation}

We first discuss the impact of changing the orientation of dominoes—--either from two horizontal to two vertical dominoes or vice versa—--in each odd or even block on our three statistics $\diags$, $\area$, and $\dinv$. In each odd block of $\hat{T}_o$, we observe that changing two horizontal dominoes to two vertical dominoes decreases $\area(\hat{T}_o)$ by $1$ and $\diags(\hat{T}_o)$ by $1$. Thus, changing the orientation of dominoes in each odd block of $\hat{T}_o$ preserves $\area(\hat{T}_o) - \diags(\hat{T}_o)$. For a vertical domino $d_v \in V(\hat{T_o})$, it is not hard to see that changing two horizontal dominoes to two vertical dominoes in each odd block of $\hat{T}_o$ keeps the number of domino pairs involving $d_v$ unchanged (viewing $d_v$ as the type I reference domino). This implies that changing the orientation of dominoes in each odd block preserves $\dinv_1(d_v)$.

Let $\widetilde{T}_o$ be the set consisting of domino tilings obtained from $T_o$ by filling the odd blocks of $T_o$ with either two horizontal or two vertical dominoes. Note that $\hat{T}_o$ is one of the elements of $\widetilde{T}_o$ and $|\widetilde{T}_o|=2^m$. The total weight of $\widetilde{T}_o$ can be obtained as follows. Starting with the odd-deficient tiling $T_o$, we sequentially fill the $i$th odd block with dominoes for $i=1,2,\dots,m$ (the order of filling does not matter). If the $i$th odd block is filled by two horizontal dominoes, $\diags(T_o)$ increases by $1$ while $\area(T_o)$ and $\dinv(T_o)$ remain unchanged (due to the way we define them \eqref{eq.def-area-To} and \eqref{eq.def-dinv-odd/even}). This contributes a factor of $z$ to the weight. On the other hand, if the $i$th odd block is filled by two vertical dominoes, $\diags(T_o)$ remains the same, $\area(T_o)$ decreases by $1$ and we assume $\dinv(T_o)$ increases by $k_i=\dinv_1(d_i)$, where $d_i$ is one of the vertical dominoes filled, with its bottom unit square colored black. This follows from the discussion in the previous paragraph. This adds a factor of $q^{-1}t^{k_i}$ to the weight. 

In total, filling the $i$th odd block contributes a factor of $z+q^{-1}t^{k_i}$ to the weight. Therefore, the total weight of $\widetilde{T}_o$ is given by
\begin{equation}\label{eq.T_oweight}
    \AD_n(\widetilde{T}_o;z;q,t) = z^{\diags(T_o)} q^{\area(T_o)} t^{\dinv(T_o)} \prod_{i=1}^{m}(z+q^{-1}t^{k_i}).
\end{equation}
Summing over all odd-deficient tilings in \eqref{eq.T_oweight} gives the generating polynomial of domino tilings of the extended $\AD_n$.
\begin{equation}\label{eq.ADsumodd}
    \AD_n(z;q,t) = \sum_{T_o}\AD_n(\widetilde{T}_o;z;q,t).
\end{equation}

In each even block of $\hat{T}_e$, we observe that changing two horizontal dominoes to two vertical dominoes decreases $\diags(\hat{T}_e)$ by $1$ but $\area(\hat{T}_e)$ remains unchanged. For a vertical domino $d^{\prime}_v \in V(\hat{T_e})$, it is not hard to see that changing the orientation of dominoes in each even block of $\hat{T}_e$ keeps the number of domino pairs involving $d^{\prime}_v$ unchanged (viewing $d^{\prime}_v$ as the type II reference domino). Thus, $\dinv_2(d^{\prime}_v)$ is preserved. 

Let $\widetilde{T}_e$ be the set consisting of domino tilings obtained from $T_e = S(T_o)$ by filling the even blocks of $T_e$ with either two horizontal or two vertical dominoes. Note that $\hat{T}_e$ is one of the elements of $\widetilde{T}_e$ and $|\widetilde{T}_e|=2^{m+n+1}$. The total weight of $\widetilde{T}_e$ can be obtained similarly. Starting with the even-deficient tiling $T_e$, we sequentially fill the $i$th even block with dominoes for $i=1,2,\dots,m+n+1$ (the order of filling does not matter). If the $i$th even block is filled by two horizontal dominoes, $\diags(T_e)$ increases by $1$ while $\area(T_e)$ and $\dinv(T_e)$ remain unchanged (due to \eqref{eq.def-area-Te} and \eqref{eq.def-dinv-odd/even}). This contributes a factor of $z$ to the weight. On the other hand, if the $i$th even block is filled by two vertical dominoes, $\diags(T_e)$ and $\area(T_e)$ remain the same, and we assume $\dinv(T_e)$ increases by $\ell_i=\dinv_2(d^{\prime}_i)$, where $d^{\prime}_i$ is one of the vertical dominoes filled, with its bottom unit square colored black. This follows from the discussion in the previous paragraph and adds a factor of $t^{\ell_i}$ to the weight. 

In total, filling the $i$th even block contributes a factor of $z+t^{\ell_i}$ to the weight. Therefore, the total weight of $\widetilde{T}_e$ is given by
\begin{equation}\label{eq.T_eweight}
    \AD_{n+1}(\widetilde{T}_e;z;q,t) = z^{\diags(T_e)} q^{\area(T_e)} t^{\dinv(T_e)} \prod_{i=1}^{m+n+1}(z+t^{\ell_i}).
\end{equation}
Summing over all even-deficient tilings in \eqref{eq.T_eweight} gives the generating polynomial of domino tilings of the extended $\AD_{n+1}$.
\begin{equation}\label{eq.ADsumeven}
    \AD_{n+1}(z;q,t) = \sum_{T_e}\AD_{n+1}(\widetilde{T}_e;z;q,t).
\end{equation}

In the following lemma, we show how the dinv statistic of the vertical dominoes contained in each odd or even block changes under the domino shuffling. This result will be used later to prove Theorem \ref{thm:introAD}. 
\begin{lemma}\label{lem.dinv-odd/even}
    Let $T$ be a domino tiling of the extended $\AD_n$ with $m$ odd blocks. Suppose the odd blocks of $T_o$ and the even blocks of $T_e=S(T_o)$ are filled with two vertical dominoes. Let $d_i$ (resp., $d^{\prime}_j$) be the vertical domino with its bottom unit square colored black contained in the $i$th odd block (resp., the $j$th even block), for each $i=1,\dots,m$ (resp., $j=1,\dots,m+n+1$).
    Assuming $\dinv_1(d_i) = k_i$ and $\dinv_2(d^{\prime}_j) = \ell_j$, then as multisets,
    \begin{equation}\label{eq.dinv-oddevenblock}
        \{\ell_1,\ell_2,\dotsc,\ell_{m+n+1} \} \setminus \{k_1,k_2,\dotsc,k_m \} = \{0,1,\dotsc,n\}.
    \end{equation}
\end{lemma}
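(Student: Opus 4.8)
The plan is to rewrite both multisets appearing in \eqref{eq.dinv-oddevenblock} as statistics of the single extended ASM $M := M_e(T_o) = M_o(T_e)$, and then to prove the resulting identity by a crossing-number count on a lattice path read off from $M$.

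For the first step, recall that the center of the $i$th odd block of $T_o$ is an even point and gives an entry $x_i = -1$ of $M$; when that block is filled with two vertical dominoes, this center is the midpoint of a longer edge of the domino $d_i$ (the one with black bottom square), so $d_i$ is associated with $x_i$ and lies in $V_b$ of the resulting tiling of the extended $\AD_n$. Hence Lemma \ref{lem.dinv-tpsum} gives $k_i = \dinv_1(d_i) = \tp(x_i) - 1$. Symmetrically, the center of the $j$th even block of $T_e$ is an odd point, gives an entry $y_j = +1$ of $M = M_o(T_e)$, is a midpoint of a longer edge of the domino $d'_j$, and so Lemma \ref{lem.dinv-tpsum} gives $\ell_j = \dinv_2(d'_j) = \tp(y_j) - \delta_{y_j,0} = \tp(y_j)$. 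Since the $m$ odd blocks are in bijection with the entries $-1$ of $M$ and the $m+n+1$ even blocks with the entries $+1$ of $M$, identity \eqref{eq.dinv-oddevenblock} becomes the purely ASM-theoretic statement
\begin{equation*}
    \{\, \tp(y) : y = 1 \text{ in } M \,\} \;\setminus\; \{\, \tp(x) - 1 : x = -1 \text{ in } M \,\} \;=\; \{0,1,\dots,n\}
\end{equation*}
of multisets.

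For the second step, list the entries of $M$ in the order used to compute top partial sums — levels $2n+1, 2n, \dots, 1$ from top to bottom, and within a level from right to left — as $e_1, e_2, \dots, e_N$, and set $P_k = e_1 + \cdots + e_k$, $P_0 = 0$. Then $\tp(e_k) = P_{k-1}$, and because each of the $n+1$ rows of the $(n+1)\times(n+1)$ ASM underlying $M$ sums to $1$, we have $P_N = n+1$. Thus $P_0, P_1, \dots, P_N$ is a lattice path from $0$ to $n+1$ with steps in $\{+1, -1, 0\}$: an entry $e_k = +1$ is an up-step leaving height $\tp(e_k)$, and an entry $e_k = -1$ is a down-step arriving at height $\tp(e_k) - 1$. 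For each integer $h$, the number of up-steps leaving height $h$ minus the number of down-steps arriving at height $h$ equals the net number of crossings of the half-integer level $h + \tfrac12$ by the path; since the path runs from $0$ to $n+1$, this net number is $1$ when $0 \le h \le n$ and $0$ otherwise. This is precisely the assertion that the multiset of heights left by up-steps equals the multiset of heights reached by down-steps together with one copy of each of $0, 1, \dots, n$, i.e. the displayed identity, and hence \eqref{eq.dinv-oddevenblock}.

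I expect the main obstacle to be the bookkeeping of the first step rather than the path count: one must verify that filling an odd (resp. even) block with two vertical dominoes leaves the extended ASM unchanged, that $d_i$ (resp. $d'_j$) is indeed the domino associated with the entry $-1$ of $M_e$ (resp. the entry $+1$ of $M_o$) coming from that block, and that the $V_g$-exception of Lemma \ref{lem.dinv-tpsum} recorded in Remark \ref{rmk.dinv} is not triggered — which it is not, because odd and even blocks lie in the genuine Aztec-diamond part of the region. Once the dictionary $\tp(\cdot) \leftrightarrow$ partial sums of $M$ is in place, the crossing argument is immediate.
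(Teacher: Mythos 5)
Your proposal is correct and follows essentially the same route as the paper: both reduce the identity via Lemma \ref{lem.dinv-tpsum} to a statement about top partial sums of the nonzero entries of $M_e(T_o)=M_o(T_e)$ read from top to bottom and right to left. The only difference is cosmetic — the paper cancels each $k_i$ against an $\ell_j$ by pairing each $-1$ with the nearest unpaired $1$ to its left and observes that the $n+1$ unpaired $1$'s have partial sums $0,1,\dots,n$, whereas you obtain the same cancellation by counting net crossings of half-integer levels of the partial-sum path from $0$ to $n+1$.
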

\begin{proof}
    Let $M_e(T)$ be the extended ASM of $T$. Define the \emph{reading word} of $T$ by recording the non-zero entries of $M_e(T)$ from top to bottom and right to left. We write $w(T) = w_1w_2 \cdots w_p$ with $w_i \in\{-1,1\}$ for the reading word of $T$ and set $\tp(w_i) = \sum_{s=1}^{i-1}w_s$ to be the top partial sum (Section \ref{sec:shuffle-dinv}) of $w_i$ in $M_e(T)$. By Lemma \ref{lem.dinv-tpsum}, 
    \begin{align*}
        k_i &= \dinv_1(d_i) = \tp(w_{a_i})-1 = \sum_{s=1}^{a_i-1}w_s - 1, \\
        \ell_j &= \dinv_2(d^{\prime}_j) = \tp(w_{b_j}) = \sum_{s=1}^{b_j-1}w_s,
    \end{align*}
    where $w_{a_i} = -1$ and $w_{b_j} = 1$ for some positive integers $a_i$ and $b_j$. 
    
    Next, we find $(1,-1)$ pairs from the reading word $w(T)$ as follows. From left to right, for each $-1$ in $w(T)$, find the nearest unpaired $1$ on its left, and form a $(1,-1)$ pair. For example, the reading word of the extended ASM given in Figure \ref{fig:extasm} is $w = 1,1,-1,1,1,1$ and $(w_2,w_3)$ forms the only $(1,-1)$ pair. Since the number of $1$'s minus the number of $-1$'s of an ASM of order $n+1$ equals $n+1$, it remains $n+1$ unpaired $1$'s in the reading word $w(T)$ for all domino tilings $T$ of the extended $\AD_n$.

    Since we assume $T$ has $m$ odd blocks, the reading word $w(T)$ contains $m$ $-1$'s, resulting in $m$ $(1,-1)$ pairs. If $w_{b_0} = 1$ is paired with $w_{a_0} = -1$ $(b_0<a_0)$ in the reading word $w(T)$, then we have the same number of $1$'s and $-1$'s in between $w_{b_0}$ and $w_{a_0}$. This implies that 
    \begin{equation}
        \tp(w_{a_0})-1 = \sum_{s=1}^{a_0-1}w_s - w_{b_0} = \sum_{s=1}^{b_0-1}w_s = \tp(w_{b_0}).
    \end{equation}
    Thus, the associated vertical dominoes contribute the same number of domino pairs, that is, $k_{a_0} = \dinv_1(d_{a_0}) = \dinv_2(d^{\prime}_{b_0}) = \ell_{b_0}$. This implies that these dinv contributions cancel out on the left-hand side of \eqref{eq.dinv-oddevenblock}, it remains to show that the dinv contribution from those unpaired $1$'s in the reading word $w(T)$ is given by $\{0,1,\dots,n\}$. 

    For each unpaired $w_{b_j}=1$ in the reading word $w(T)$, it is easy to see that  
    \begin{equation}
        \dinv_2(d^{\prime}_{b_j}) = \tp(w_{b_j}) = \sum_{s=1}^{b_j-1} w_s = j-1,
    \end{equation}
    for $j=1,2,\dotsc,n+1$. In other words, the left-hand side of \eqref{eq.dinv-oddevenblock} is given by $\{0,1,\dotsc,n\}$, as desired.
\end{proof}

Recall that Theorem \ref{thm:introAD} states that the polynomial $\AD_n(z;q,t)$ is given by the product formula. We close this section with the proof of Theorem \ref{thm:introAD}.
\begin{proof}[Proof of Theorem \ref{thm:introAD}]
    We prove it by induction on $n$. For the base case $n=1$, the extended $\AD_1$ is a $2 \times 2$ square which has two domino tilings: the tiling with two horizontal dominoes is weighted by $z$ while the tiling with two vertical dominoes is weighted by $1$. Thus, $\AD_1(z;q,t)=z+1$ matches \eqref{eq.ADproduct} when $n=1$. 
    
    Now, we assume the statement holds for $n$. Substituting $zq^{-1}$ for $z$ in \eqref{eq.T_oweight} yields
    \begin{equation}\label{eq.proofADsub}
        \AD_n(\widetilde{T}_o;zq^{-1};q,t) = z^{\diags(T_o)} q^{\area(T_o)-\diags(T_o)} t^{\dinv(T_o)} q^{-m} \prod_{i=1}^{m}(z+t^{k_i}).
    \end{equation}
    By Lemma \ref{lem.dinv-odd/even}, we can relate the linear products in \eqref{eq.T_eweight} and \eqref{eq.proofADsub} as follows
    \begin{equation}
        \prod_{i=1}^{m}(z+t^{\ell_i}) = \prod_{i=1}^{m}(z+t^{k_i}) \prod_{i=0}^{n}(z+t^i).
    \end{equation}
    In \eqref{eq.T_eweight}, we replace $\area(T_e)$ with $\area(T_o)$ by Proposition \ref{prop:area-shuffle} and replace $\dinv(T_e)$ with $\dinv(T_o)$ by Proposition \ref{prop.dinv-shuffle}. It is clear that $\diags(T_e) = \diags(T_o)$. Combining the above discussion, we can rewrite \eqref{eq.T_eweight} as 
    \begin{align}
        \AD_{n+1}(\widetilde{T}_e;z;q,t) & = z^{\diags(T_o)} q^{\area(T_o)-\dinv(T_o)} t^{\dinv(T_o)} \prod_{i=1}^{m}(z+t^{k_i}) \nonumber\\
        & \quad \times q^{n(2n+1)-m}t^{n^2+\binom{n+1}{2}} \prod_{i=0}^{n}(z+t^i) \nonumber\\
        & = \AD_n(\widetilde{T}_o;zq^{-1};q,t) q^{n(2n+1)}t^{n^2+\binom{n+1}{2}} \prod_{i=0}^{n}(z+t^i). \label{eq.proofADalign}
    \end{align}
    
    Since $T_e$ is the image of $T_o$ under the domino shuffling, we then sum over all odd-deficient tilings $T_o$ on both sides of \eqref{eq.proofADalign}. By \eqref{eq.ADsumeven}, we obtain $\AD_{n+1}(z;q,t)$ on the left-hand side. On the right-hand side, by \eqref{eq.ADsumodd} and the induction hypothesis, we have
    \begin{align*}
       & \quad \AD_n(zq^{-1};q,t) q^{n(2n+1)}t^{n^2+\binom{n+1}{2}} \prod_{i=0}^{n}(z+t^i) \nonumber \\
        & = \left( (qt)^{n^2(n-1)/2}\prod_{i,j \geq 0 \text{ and } i+j<n}(zq^{-1}+q^it^j) \right) q^{n(n+1)/2} q^{n(3n+1)/2} t^{n(3n+1)/2} \prod_{i=0}^{n}(z+t^i) \nonumber \\
        & = \left( (qt)^{n^2(n-1)/2}\prod_{i \geq 1, j \geq 0 \text{ and } i+j<n+1}(z+q^it^j) \right) (qt)^{n(3n+1)/2} \prod_{i=0}^{n}(z+t^i) \nonumber \\ 
        & = (qt)^{(n+1)^2 n/2}\prod_{i,j \geq 0 \text{ and } i+j<n+1}(z+q^it^j).
    \end{align*}
This completes the proof of \cref{thm:introAD}.
\end{proof}

\section{An Open Problem}\label{sec:open}

In this section, we formulate a conjecture regarding the polynomials $P_\lambda(z;q,t)$ (see \eqref{eq.P-polynomial}).

In Theorem \ref{thm:introAD}, we prove that the polynomial $P_\lambda(z;q,t)$ factors nicely when $\lambda = (n^n)$. Based on computations for small partitions, $P_\lambda(z;q,t)$ appear to have many factors of the form $(z+q^it^j)$. We conjecture that these linear factors are determined by the \emph{rank} of the partition $\lambda$, that is, the size of the Durfee square contained in $\lambda$ (\cite[Page 289]{ECII}). We have verified it for all partitions of size at most $9$.
\begin{conjecture}\label{conj:divisIntro}
    For any partition $\lambda$ of rank $r$, then the polynoimal $P_{\lambda}(z;q,t)$ is divisible by linear factors $(z+q^it^j)$ for $i,j \geq 0$ and $i+j<r$, 
    \begin{equation}
        P_{\lambda}(z;q,t) = Q_{\lambda}(z;q,t) \cdot \prod_{i,j\geq 0 \text{ and } i+j<r}(z+q^it^j),
    \end{equation}
    where the quotient $Q_{\lambda}(z;q,t)$ is a polynomial in $\mathbb{Z}_{\geq 0}[z,q,t]$.
\end{conjecture}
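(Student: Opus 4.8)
I would split the conjecture into (a) the divisibility of $P_\lambda(z;q,t)$ by $D_r(z):=\prod_{i,j\ge 0,\ i+j<r}(z+q^it^j)$ in $\Q(q,t)[z]$, and (b) the non-negativity of the coefficients of the quotient $Q_\lambda$. It is worth noting at the outset that, once (a) is known, the fact that $Q_\lambda$ is a \emph{polynomial} in all three variables is automatic: $D_r(z)$ is monic in $z$ with coefficients in $\Z[q,t]$, and $P_\lambda\in\Z_{\ge 0}[q,t][z]$ (each monomial has coefficient $1$, counting tilings with prescribed statistics), so Euclidean division of $P_\lambda$ by $D_r$ already produces a quotient in $\Z[q,t][z]=\Z[z,q,t]$. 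Thus the real content of (b) is positivity, not polynomiality.

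\textbf{Reducing (a) to a hook identity.} For (a) I would start from Theorem \ref{thm:MacdonaldIntro}, which gives $P_\lambda(z;q,t)=\sgn(\lambda)\,\langle\nabla(s_\lambda),G_n(z)\rangle$ where $n=|\lambda|$ and $G_n(z):=\sum_{d=0}^{n}z^d\,h_de_{n-d}$. Expanding each $h_de_{n-d}$ in the Schur basis by Pieri's rule yields only hook shapes, and collecting terms gives the identity
\[
G_n(z)=(1+z)\sum_{a=1}^{n}z^{a-1}\,s_{(a,1^{n-a})}.
\]
Hence $P_\lambda(z;q,t)=\sgn(\lambda)\,(1+z)\,\Phi_\lambda(z)$ with $\Phi_\lambda(z):=\sum_{a=1}^{n}z^{a-1}\langle\nabla(s_\lambda),s_{(a,1^{n-a})}\rangle$, so the factor $z+1$ (the case $i=j=0$, which only needs $r\ge 1$, i.e. $\lambda\ne\varnothing$) comes for free. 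It then remains to show that $\Phi_\lambda(-q^it^j)=0$ whenever $1\le i+j<r$; equivalently, that the hook-component polynomial $\sum_{a=1}^{n}x^{a-1}\langle\nabla(s_\lambda),s_{(a,1^{n-a})}\rangle$ vanishes at $x=-q^it^j$ for all such $(i,j)$.

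\textbf{Proving the hook vanishing.} For this vanishing I would induct on the rank $r$, using the square case $\lambda=(r^r)$ — where Theorem \ref{thm:introAD} shows $\Phi_{(r^r)}(x)$ has exactly the prescribed roots — as the anchor. The technical heart is a recursion that relates the hook components $\langle\nabla(s_\lambda),s_{(a,1^{n-a})}\rangle$ to the analogous data of a partition of smaller rank obtained by peeling off the border strips of $\lambda$ that lie outside its Durfee square; one wants to arrange this so that passing from $(r^r)$ to $\lambda$ multiplies $\Phi$ by a monomial times a factor contributing no new small roots, so that the roots $x=-q^it^j$ with $i+j<r$ propagate. An alternative, purely symmetric-function route is to identify $\sum_{a}x^{a-1}\langle\nabla(s_\lambda),s_{(a,1^{n-a})}\rangle$ with a Delta-/Theta-operator evaluation and read off its roots directly; recent operator identities in the Delta-conjecture literature make this plausible. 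A third possibility, closest in spirit to Section \ref{sec:pfAD}, is a sign-reversing involution on $\mathcal S_\lambda$ with respect to $(-1)^{\diags}$ that preserves the $(i,j)$-twisted statistics $i\cdot\diags+\area$ and $j\cdot\diags+\dinv$ for each $i+j<r$, built from local ``block-switching'' moves on the corresponding domino tilings.

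\textbf{The main obstacle: positivity of $Q_\lambda$.} Part (b) is where the real difficulty lies: divisibility produces only an integer quotient, and positivity is a genuinely finer statement, of the same flavor as (and presumably at least as hard as) the positivity of the $q,t$-Catalan and $q,t$-Schr\"oder numbers, for which no elementary proof is known. The natural plan is to find a combinatorial model for $Q_\lambda$: a distinguished subfamily $\mathcal S_\lambda^{\circ}\subseteq\mathcal S_\lambda$ whose weighted generating function is $Q_\lambda$ (up to the expected $(qt)$-power), together with a weight-compatible involution showing that the remaining $\pi\in\mathcal S_\lambda\setminus\mathcal S_\lambda^{\circ}$ organize into orbits realizing the linear factors of $D_r(z)$. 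In the square case $\mathcal S_{(r^r)}^{\circ}$ should be the single $(r^r)$-family of Dyck paths (consistent with $P_{(r^r)}(0;q,t)=(qt)^{r^2(r-1)/2}$), and the block-switching operations of Section \ref{sec:pfAD}, which realize factors $z+q^it^j$ locally on Aztec-diamond tilings, are the template. The obstruction is that for general $\lambda$ the region $R_\lambda$ lacks the shuffling structure, so one must either develop a substitute recursion on $R_\lambda$ or characterize $\mathcal S_\lambda^{\circ}$ intrinsically; I would guide this search with the data verified for $|\lambda|\le 9$, first pinning down $Q_\lambda(1;1,1)=|\mathcal S_\lambda^{\circ}|$ and the low-order coefficients before attempting the general bijection.
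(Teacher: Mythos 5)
The statement you are addressing is Conjecture \ref{conj:divisIntro}, which the paper explicitly leaves open: Section \ref{sec:open} offers no proof, only numerical verification for $|\lambda|\le 9$ and the remark that Theorem \ref{thm:introAD} settles the square case $\lambda=(n^n)$. So there is no argument in the paper to compare yours against, and the question is simply whether your proposal constitutes a proof. It does not. What you do establish rigorously is worth separating out: the Pieri computation $h_de_{n-d}=s_{(d,1^{n-d})}+s_{(d+1,1^{n-d-1})}$ (with the obvious end conventions) does give $\sum_d z^dh_de_{n-d}=(1+z)\sum_{a=1}^{n}z^{a-1}s_{(a,1^{n-a})}$, hence via Theorem \ref{thm:MacdonaldIntro} the factor $z+1$ of $P_\lambda$; and your observation that, granted divisibility, the quotient lies in $\Z[z,q,t]$ automatically (Euclidean division by the $z$-monic $D_r$ applied to $P_\lambda\in\Z_{\ge0}[q,t][z]$) is correct and correctly isolates positivity as the real content of part (b).

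Everything beyond that is a research plan rather than a proof, and the two essential steps are both missing. For the divisibility by the factors with $i+j\ge 1$ you list three candidate strategies (a rank-induction via a border-strip recursion, an unidentified Delta-operator identity, a sign-reversing involution on $\mathcal S_\lambda$), but none is carried out, no recursion is written down, and you give no reason any of them closes; in particular the claimed reduction to the square case presumes a relation between $\nabla(s_\lambda)$ and $\nabla(s_{(r^r)})$ that is not known and is not supplied. For positivity of $Q_\lambda$ you acknowledge the difficulty yourself: the proposed subfamily $\mathcal S_\lambda^{\circ}$ and the weight-compatible involution realizing the factors $z+q^it^j$ are not constructed, and as you note the domino-shuffling mechanism of Section \ref{sec:pfAD} that produces these factors for $(n^n)$ has no known analogue on $R_\lambda$ for general $\lambda$. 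The statement therefore remains a conjecture; your write-up is best read as a partial result (the $z+1$ factor and integrality of the quotient) together with a plausible but unexecuted program for the rest.
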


Theorem \ref{thm:introAD} directly confirms that Conjecture \ref{conj:divisIntro} holds for $\lambda=(n^n)$, where $\lambda$ is equal to its Durfee square. In general, the rank of $\lambda$ is equal to the number of nonzero border strip lengths $n_0,n_1,\dots,n_k$, and therefore the number of nontrivial paths in a $\lambda$-family of Schr\"oder paths. It would be an interesting problem to prove Conjecture~\ref{conj:divisIntro} either through the combinatorics of domino shuffling or using symmetric functions and Macdonald polynomials via Theorem \ref{thm:MacdonaldIntro}.

\section*{Acknowledgements}

This work was initiated at the Early-Career Conference in Combinatorics in June 2024. The second author thanks the conference organizers and the University of Illinois Urbana-Champaign for their hospitality. The authors thank James Haglund for pointing out the reference \cite[Chapter 6]{H08} and Nicholas Loehr for helpful conversations, especially Theorem \ref{thm:qtsym}.

\begingroup
\setstretch{1}
\bibliographystyle{plain}
\bibliography{refs.bib} 

\begin{thebibliography}{10}

\bibitem{BG99}
Francois Bergeron and Adriano~M. Garsia.
\newblock Science fiction and {Macdonald’s} polynomials.
\newblock pages 1--52, 1999.

\bibitem{BHMPS}
Jonah Blasiak, Mark Haiman, Jennifer Morse, Anna Pun, and George Seelinger.
\newblock Dens, nests and the {Loehr-Warrington} conjecture.
\newblock arXiv:2112.07070, 2021.

\bibitem{Ciucu96}
Mihai Ciucu.
\newblock Perfect matchings of cellular graphs.
\newblock {\em J. Algebraic Combin.}, 5(2):87--103, 1996.

\bibitem{CGK22}
Sylvie Corteel, Andrew Gitlin, and David Keating.
\newblock Colored vertex models and $k$-tilings of the aztec diamond.
\newblock arXiv:2202.06020, 2024.

\bibitem{EHKK03}
E.~S. Egge, J.~Haglund, K.~Killpatrick, and D.~Kremer.
\newblock A {Schröder} generalization of {Haglund}'s statistic on {Catalan} paths.
\newblock {\em Electron. J. Comb.}, 10:Research Paper 16, 21, 2003.

\bibitem{EKLP1}
Noam Elkies, Greg Kuperberg, Michael Larsen, and James Propp.
\newblock Alternating-sign matrices and domino tilings. {I}.
\newblock {\em J. Algebraic Combin.}, 1(2):111--132, 1992.

\bibitem{EKLP2}
Noam Elkies, Greg Kuperberg, Michael Larsen, and James Propp.
\newblock Alternating-sign matrices and domino tilings. {II}.
\newblock {\em J. Algebraic Combin.}, 1(3):219--234, 1992.

\bibitem{GH02}
A.~M. Garsia and J.~Haglund.
\newblock A proof of the {$q,t$}-{C}atalan positivity conjecture.
\newblock volume 256, pages 677--717. 2002.
\newblock LaCIM 2000 Conference on Combinatorics, Computer Science and Applications (Montreal, QC).

\bibitem{GH96}
A.M. Garsia and M.~Haiman.
\newblock A remarkable $q, t$-{Catalan} sequence and $q$-{Lagrange} inversion.
\newblock {\em Algebr. Comb.}, 5:191–244, 1996.

\bibitem{Hag04}
J.~Haglund.
\newblock A proof of the $q,t$-{Schröder} conjecture.
\newblock {\em Int. Math. Res. Not.}, 2004:525–560, 2004.

\bibitem{H08}
James Haglund.
\newblock {\em The $q,t$-{Catalan} Numbers and the Space of Diagonal Harmonics: With an Appendix on the Combinatorics of {Macdonald} Polynomials}, volume~41 of {\em University Lecture Series}.
\newblock American Mathematical Society, 2008.

\bibitem{Loehr}
Nicholas~A. Loehr.
\newblock Private communication, 2025.

\bibitem{LW08}
Nicholas~A. Loehr and Gregory~S. Warrington.
\newblock Nested quantum {Dyck} paths and $\nabla s_\lambda$.
\newblock {\em Int. Math. Res. Not. IMRN}, pages rnm 157, 29, 2008.

\bibitem{LRS01}
Michael Luby, Dana Randall, and Alistair Sinclair.
\newblock Markov chain algorithms for planar lattice structures.
\newblock {\em SIAM J. Comput.}, 31(1):167--192, 2001.

\bibitem{Mac79}
I.G. Macdonald.
\newblock {\em Symmetric Functions and Hall Polynomials}.
\newblock Oxford University Press, 1979.

\bibitem{Mac88}
I.G. Macdonald.
\newblock A new class of symmetric functions.
\newblock {\em Séminaire Lotharingien de Combinatoire [electronic only]}, 20:B20a, 41 p.--B20a, 41 p., 1988.

\bibitem{MRR83}
W.~H. Mills, David~P. Robbins, and Howard Rumsey, Jr.
\newblock Alternating sign matrices and descending plane partitions.
\newblock {\em J. Combin. Theory Ser. A}, 34(3):340--359, 1983.

\bibitem{ECII}
Richard~P. Stanley.
\newblock {\em Enumerative combinatorics. {V}ol. 2}, volume~62 of {\em Cambridge Studies in Advanced Mathematics}.
\newblock Cambridge University Press, Cambridge, 1999.
\newblock With a foreword by Gian-Carlo Rota and appendix 1 by Sergey Fomin.

\end{thebibliography}
\endgroup

\end{document}